\let\voo\v
\DeclarePairedDelimiter{\norm}{\|}{\|}
\DeclarePairedDelimiter{\snorm}{|}{|}
\def\v{\mathbf{v}}
\def\be{\mathbf{e}}
\def\la{\langle}
\def\ra{\rangle}
\def\div{\operatorname{div}}
\def\RR{\mathbb{R}}
\def\Itau{\mathcal{I}_\tau}
\def\I{I}
\def\Th{\mathcal{T}_h}
\def\Vh{\mathcal{V}_h}
\def\Qh{\mathcal{Q}_h}
\def\Zh{\mathcal{Z}_h}
\def\dt{\partial_t}
\def\dtt{\partial_{tt}}
\def\uu{\mathbf{u}}
\def\bbf{\mathbf{f}}
\def\vv{\mathbf{v}}
\def\rr{\mathbf{r}}
\def\D{\mathcal{D}}
\def\E{\mathcal{E}}
\def\Ah{\mathbf{A}_h}
\def\P{\mathbf{P}}
\def\softd{{\leavevmode\setbox1=\hbox{d}%
		\hbox to 1.05\wd1{d\kern-0.4ex{\char039}\hss}}}
\def\skw{{\operatorname{skw}}}
\def\bmu{\bar{\mu}}
\def\hbmu{\hat{\bar{\mu}}}
\def\bu{\bar{\uu}}
\def\hbu{\hat{\bar{\uu}}}
\def\hbp{\hat{\bar{p}}}
\def\hbphi{\hat{\bar{\phi}}}
\newcommand{\oobar}[1]{\mkern 1.5mu\overline{\mkern-1.5mu#1\mkern-1.5mu}\mkern 1.5mu}
\newtheorem{lemma}{Lemma}
\newtheorem{problem}[lemma]{Problem}
\newtheorem{theorem}[lemma]{Theorem}
\theoremstyle{definition}
\newtheorem{remark}[lemma]{Remark}
\author{A. Brunk$^1$, H. Egger$^{2,3}$, O. Habrich$^3$, and  
M. Luk\'a\voo{c}ov\'a-Medvi\softd ov\'a$^1$}
\title[Structure-preserving discretization for the CHNS system]{A second-order fully-balanced  structure-preserving variational\\ discretization scheme for the Cahn-Hilliard Navier-Stokes system}
\address{$^1$Institute of Mathematics, Johannes Gutenberg-University Mainz, Germany}
\address{$^2$Johann Radon Institute for Computational and Applied Mathematics, Linz, Austria}
\address{$^3$Institute for Numerical Mathematics, Johannes Kepler University Linz, Austria}
\email{abrunk@uni-mainz.de} \email{herbert.egger@jku.at} \email{oliver.habrich@jku.at} \email{lukacova@uni-mainz.de}
\begin{document}

\begingroup
\def\uppercasenonmath#1{} 
\let\MakeUppercase\relax 
\maketitle
\endgroup

\begin{abstract}
We propose and analyze a structure-preserving space-time variational discretization method for the Cahn-Hilliard-Navier-Stokes system. Uniqueness and stability for the discrete problem is established in the presence of concentration dependent mobility and viscosity parameters by means of the relative energy estimates and order optimal convergence rates are established for all variables using balanced approximation spaces and relaxed  regularity conditions on the solution.
Numerical tests are presented to demonstrate the proposed method is fully practical and yields the predicted convergence rates. The discrete stability estimates developed in this paper may also be used to analyse other discretization schemes, which is briefly outlined in the discussion. 
\end{abstract}

\section{Introduction} \label{sec:1}

The Cahn-Hilliard-Navier-Stokes system is frequently used as a diffuse interface model for the flow of two immiscible incompressible fluids and for modelling phase separation of a binary fluid after a deep quench; see \cite{abels2007,anderson1998,Hohenberg} for applications and references. %
In this paper, we study a system with constant mass densities, but concentration dependent mobility and viscosity, given by
\begin{alignat}{5} 
\dt \phi + \uu\cdot\nabla\phi&= \div(b(\phi) \nabla \mu), \label{eq:chns1}\\
\mu &= -\gamma \Delta \phi + f'(\phi), \label{eq:chns2} \\
\dt \uu + (\uu\cdot\nabla)\uu &= \div(\eta(\phi)\nabla\uu) - \nabla p - \phi\nabla\mu,
\label{eq:chns3} \\
\div(\uu)&=0.
\label{eq:chns4}
\end{alignat}
Here $\phi$ is the phase fraction or relative concentration of the two fluids, $\mu$ is the chemical potential, $\uu$ the velocity of the fluid mixture, $p$ the pressure, and $b(\cdot)$, $\eta(\cdot)$ are the mobility and viscosity functions.
Furthermore, $\gamma>0$ is the interface parameter and $f(\phi)$ a double well potential whose minima characterize the equilibrium concentrations of the two phases.
The density $\rho$ of the fluids is assumed constant, and set to one for simplicity.

The system~\eqref{eq:chns1}--\eqref{eq:chns4} describes a relaxation process towards equilibrium, expressed by decay of the free energy
$ \E(\phi,\uu) = \int_\Omega \frac{\gamma}{2} |\nabla \phi|^2 + f(\phi) + \frac{1}{2} |\uu|^2 \, dx$, 
more precisely
\begin{align} \label{eq:energydissipation}
\E(\phi(t),\uu(t)) + \int_s^t \D_{\phi}(\mu,\uu) \, dr
= \E(\phi(s),\uu(s)) , \qquad t>s, 
\end{align}
with $\D_\phi(\mu,\uu)=\int_\Omega b(\phi) |\nabla \mu|^2 + \eta(\phi) |\nabla \uu|^2 \, dx$ denoting the dissipation rate. 
This relation not only implies thermodynamic consistency of the model, but also allows to establish a-priori bounds and the existence of solutions. 
We refer to \cite{Abels2008,Boyer13} for a comprehensive analysis, including existence, uniqueness, and regularity results. 
Extension of the above mentioned model to logarithmic and more singular potentials with degenerate mobility are given in \cite{Boyer13}. An extended model including phase-dependent densities is derived in \cite{AGG}, whose analysis and error estimates can be found in \cite{Grn2013,GARCKE2016151}. \\

\textbf{Related results.}
Significant work has been attributed also to the numerical solution of \eqref{eq:chns1}--\eqref{eq:chns4}.
In \cite{Feng06}, a fully discrete scheme based on a finite element approximation in space combined with an implicit time-stepping scheme has been proposed and its convergence to weak solutions of the phasefield models and its sharp interface limit has been established. 
Great effort has further been devoted to reduce the computational cost of simulation by various splitting and linearization approaches. 
In \cite{ChenZhao2020,Chen2016,ZhaoHan2021}, finite difference methods in space are used together with first and second order linear implicit time discretization methods. \cite{Gong2018} considers an IEQ approach for dealing with the nonlinearity in the potential.
In \cite{Bao2012,Han2015}, mixed finite elements methods are employed with convex-concave splitting of the potential, leading again to efficient linear implicit time integration schemes. 
%
%
All these works formulate linear-implicit unconditionally stable time-stepping schemes of first or second order accuracy, complemented by appropriate spatial discretization approaches, in order to obtain efficient fully discrete methods. 
%

Much fewer results are available up to date concerning rigorous convergence estimates for the proposed discretization methods: 
Kay~et~al.~\cite{Kay2008,Kay2007} prove $O(h)$ convergence for a finite-element approximation of the Cahn-Hilliard Navier-Stokes problem with constant mobility; a fully-discrete scheme is presented for the numerical tests. 
Cai and Shen~\cite{CaiShen18} show $O(\tau + h^q)$ convergence for a  finite element discretization combined with a linear implicit and weakly coupled energy stable scheme in time. 
Liu and Riviere~\cite{LiuRiviere2020} prove $O(\tau + h^q)$ 
convergence estimates for a discontinuous Galerkin approximation combined with an implicit time-stepping scheme. 
$O(\tau^2+h^q)$ error estimates are established by Diegel et al.~\cite{Diegel2017} for a linear implicit time-stepping scheme combined with inf-sup stable finite elements for the spatial approximation. 
Li and Shen~\cite{LiShen2020} establish $O(\tau^2 + h^2)$ convergence for a linear implicit method using an auxiliary variable approach for the time discretization and a staggered-grid finite difference method for spatial approximation of the fluid flow. 
In the recent paper~\cite{LiShen2022}, the same authors prove $O(\tau)$ convergence rates for a fully decoupled time-stepping scheme. 
Let us emphasize that all the above results concerning convergence rates are obtained for constant mobility and viscosity only. \\

\textbf{Main contributions and possible extensions.}
We propose and analyze a fully discrete approximation scheme for a system with nonlinear mobility and viscosity parameters, that provably preserves the variational structure of the underlying equations exactly also on the discrete level.
Finite element spaces with balanced approximation properties are used for the spatial discretization, combined with a variational time-discretization scheme, akin to the average vector field method \cite{Gonzales96,McLachlanEtAl99}. 
The existence of discrete solutions can be shown without restriction on the discretization parameters. While uniqueness can be established under a mild CFL condition.
Order optimal error estimates for this fully-balanced approximation scheme are established in the presence of nonlinear model parameters for all solution components, including the pressure, and under weaker regularity assumptions on the exact solution compared to previous work. 
A main advantage of the the proposed scheme is that it automatically inherits the energy-dissipative structure of the continuous problem. This allows us to establish discrete stability by means of a discrete relative energy estimates, which greatly simplifies the error analysis.
These estimates also allows us to establish uniqueness of the discrete solution and stability with respect to perturbations measured in appropriate norms. We refer a reader to \cite{Juengel16}, \cite{LSY} for an introduction and further related results. 

Our results are presented in detail for a fully balanced second order approximation in space and time, but the method and its analysis generalize quite naturally to higher order approximations. 
As illustrated in numerical tests, the proposed scheme is fully practical, but a coupled nonlinear system has to be solved in every time step, which typically requires a few Newton iterations. 
As briefly discussed at the end of the paper, alternative, possibly more efficient, discretization schemes may be considered as perturbations of the proposed scheme, and the discrete stability estimates derived in this paper can be used to obtain corresponding error estimates.
The focus of the paper therefore lies on the nonlinear stability and error analysis of the proposed structure-preserving discretization scheme.

\bigskip 

\textbf{Outline.}
The remainder of the manuscript is organized as follows: 
Section~\ref{sec:prelim} presents our notation and basic assumptions,
and the basic ingredients for our discretization strategy.
In Section~\ref{sec:main}, we then introduce our numerical method and state our main theoretical results. 
Sections~\ref{sec:proof:thm:well-posed}--\ref{sec:proof:thm:fulldisk_part2} discuss the essential steps for the proofs of our theorems. 
For illustration of or theoretical results, we present some numerical tests in Section~\ref{sec:num}, and the main part of the paper closes with a short discussion.
Technical proof of auxiliary results, which are required for our analysis, are presented in Appendix~\ref{app:proj}--\ref{app:time-derivative}.
%

\section{Preliminaries and notation} \label{sec:prelim}

Before we present out discretization method and main results in detail, let us briefly introduce our notation and main assumptions, and recall some basic facts.

\subsection*{Notation}

The system \eqref{eq:chns1}--\eqref{eq:chns4} is investigated on a finite time interval $(0,T)$. 
To avoid the introduction of boundary conditions, we consider a spatially periodic setting, i.e., 
\begin{itemize}
\item[(A0)] $\Omega \subset \RR^d$, $d=2,3$ is a cube and identified with the $d$-dimensional torus $\mathcal{T}^d$.\\
Moreover, functions on $\Omega$ are assumed to be periodic throughout the paper. 
\end{itemize}
With minor changes, all results derived in the paper carry over to bounded domains with more general boundary conditions; see e.g. \cite{Boyer13,Diegel2017}.
By $L^p(\Omega)$, $W^{k,p}(\Omega)$, we denote the usual Lebesgue and Sobolev spaces with norms $\norm{\cdot}_{L^{p}}$ and $\norm{\cdot}_{W^{k,p}}$, respectively. 
As usual, we abbreviate $H^k(\Omega)=W^{k,2}(\Omega)$ and $\norm{\cdot}_{H^{k}} = \norm{\cdot}_{W^{k,2}}$. 
The corresponding dual spaces are denoted by $H^{-k}(\Omega)=H^k(\Omega)^*$, with norm defined by
\begin{align} \label{eq:dualnorm}
    \norm{r}_{-k} = \sup_{v \in H^s(\Omega)} \frac{\la r, v\ra}{\|v\|_{k}}.
\end{align}
Here $\langle \cdot, \cdot\rangle$ denotes the duality product on $H^{-k}(\Omega) \times H^k(\Omega)$. 
The same symbol is also used for the scalar product on $L^2(\Omega)$, which is defined by
\begin{align} \label{eq:sp}
\la u, v \ra = \int_\Omega u \cdot v \, dx \qquad \forall u,v \in L^2(\Omega).    
\end{align}
%
%
By $L^2_0(\Omega) \subset L^2(\Omega)$, we denote the spaces of square integrable functions with zero average.
%
%
As usual, we denote by $L^p(a,b;X)$, $W^{k,p}(a,b;X)$, and
$H^k(a,b;X)$, the Bochner spaces of integrable or differentiable functions on the time interval $(a,b)$ with values in some Banach space $X$. If $(a,b)=(0,T)$, we omit reference to the time interval and briefly write $L^p(X)$. The corresponding norms are denoted, e.g., by $\|\cdot\|_{L^p(X)}$ or $\|\cdot\|_{H^k(X)}$.

\subsection*{Main assumptions}

Throughout the paper, we utilize that the model parameters are sufficiently smooth and satisfy some typical assumptions. In particular, we require that 
\begin{itemize}\itemsep1ex
\item[(A1)] $\gamma>0$ is a positive constant;
\item[(A2)] $b\in C^2(\mathbb{R})$ with $0< b_1\leq b(s) \leq b_2$, $\norm{b'}_{L^\infty(\RR)}\leq b_3$, $\norm{b''}_{L^\infty(\RR)}\leq b_4$;
\item[(A3)] $f\in C^4(\mathbb{R})$ such that $f(s) \ge 0$ and $f''(s) \geq -f_1$, for some $f_1\geq0$. Furthermore, 
$$
|f^{(k)}(s)| \le f_2^{(k)} + f_3^{(k)} |s|^{4-k}, \qquad 0 \le k \le 4;
$$
\item[(A4)] $\eta\in C^2(\mathbb{R})$ with $0< \eta_1\leq \eta(s) \leq \eta_2$, $\norm{\eta'}_{L^\infty(\RR)}\leq \eta_3$, $\norm{\eta''}_{L^\infty(\RR)}\leq \eta_4$.
\end{itemize}
Existence and uniqueness of a regular periodic solution to \eqref{eq:chns1}--\eqref{eq:chns4} on $\Omega \times (0,T)$ can then be established for appropriate initial conditions $\phi(0)$, $\uu(0)$, and a restriction on the maximal time $T$ in dimension $d=3$; see \cite{Boyer13} for corresponding results.

\subsection*{Variational characterization}

Using standard arguments, one can see that any smooth periodic solution of \eqref{eq:chns1}--\eqref{eq:chns4} satisfies the variational identities
\begin{align}
\la \dt\phi,\psi \ra &- \la \phi\uu,\nabla\psi\ra + \la b(\phi)\nabla\mu,\nabla\psi \ra = 0, \label{eq:weak1} \\
\la \mu,\xi \ra &- \gamma\la \nabla\phi,\nabla\xi \ra - \la f'(\phi),\xi \ra = 0, \label{eq:weak2}\\
\la \dt\uu,\vv \ra &+ \la \uu \cdot \nabla \uu, \vv\ra_\skw + \la \eta(\phi)\nabla\uu,\nabla\vv \ra - \la p,\div\vv \ra + \la \phi\nabla\mu,\vv\ra=0, \label{eq:weak3} \\
\la \div\uu,q \ra & = 0, \label{eq:weak4}
\end{align}
for all sufficiently smooth periodic test functions $\psi$, $\xi$, $\vv$, $q$, and time $0 \le t \le T$.
As before, $\langle a,b\rangle=\int_\Omega a \, b \, dx$ denotes the usual scalar product, while the symbol
\begin{align} \label{eq:skw}
\la a \cdot \nabla b, c \ra_\skw := \frac{1}{2} (\la a \cdot \nabla b, c\ra - \la b,a\cdot \nabla b\ra)
\end{align}
is used to represent the skew-symmetric part of the convection term.
Further note that the components of the solution depend on time $t$, while the test functions are independent of $t$.
Hence the variational identities \eqref{eq:weak1}--\eqref{eq:weak4} have to be understood pointwise in time. 

\subsection*{Basic properties}

The variational characterization \eqref{eq:weak1}--\eqref{eq:weak4} immediately implies some important properties of solutions:
By testing with $\psi=1$, $\xi=0$, $\vv=0$, and $q=0$, we get 
\begin{align} \label{eq:mass}
\frac{d}{dt} \int_\Omega \phi(t) \, dx = 0,
\end{align}
which encodes the conservation of mass. 
Testing with $\psi=\mu(t)$, $\xi=\dt\phi(t)$, $\vv=\uu(t)$ and $q=p(t)$, on the other hand, directly leads to 
\begin{align} \label{eq:energy}
\frac{d}{dt} \E(\phi(t),\uu(t)) = - \D_{\phi(t)}(\mu(t),\uu(t)).
\end{align}
By integration in time, one then immediately recovers the energy--dissipation law \eqref{eq:energydissipation}. 
These important properties of solutions are thus encoded already in the particular variational characterization \eqref{eq:weak1}--\eqref{eq:weak4} of solutions.

\begin{remark} 
The energy identity \eqref{eq:energy} is based on skew-symmetry of various terms in the variational formulation, which can be preserved systematically on the discrete level. 
Similar  skew-symmetric variants of the convective term $\la \cdot, \ra_\skw$ in \eqref{eq:weak3} were also used in the works \cite{CaiShen18,ChenZhao2020,Feng06}, for instance. 
In contrast to the approach of \cite{Diegel2017}, the particular form of the convective term in \eqref{eq:weak1} allows us to guarantee conservation of mass on the discrete level also for balanced approximation spaces. 
\end{remark}

\subsection*{Space discretization}

As another preparatory step, let us introduce the relevant notation and assumptions for our discretization strategy. We require that
\begin{itemize}\itemsep1ex
    \item[(A5)] $\Th$ is a geometrically conforming and quasi-uniform partition of $\Omega$ into simplices that can be extended periodically to periodic extensions of $\Omega$. 
\end{itemize}
By quasi-uniform, we mean that there exists a constant $\sigma>0$ such that $\sigma h \le \rho_K \le h_K \le h$ for all $K \in \Th$, where $\rho_K$ and $h_K$ are the inner-circle radius and diameter of the element $K \in \Th$ and $h=\max_{K \in \Th} h_T$ is the global mesh size \cite{BrennerScott,John2016}. 
%
%
We denote by 
\begin{align*}
    \Vh &:= \{v \in H^1(\Omega) : v|_K \in P_2(K) \quad \forall K \in \Th\},\\
    \Qh &:= \{v \in L^2_0(\Omega)\cap H^1(\Omega) : v|_K \in P_1(K) \quad \forall K \in \Th\}
\end{align*}
the spaces of continuous and piecewise quadratic, respectively piecewise linear functions over $\Th$.
Elements of $\Qh$ further satisfy the zero average condition. 
Furthermore,
\begin{align} \label{eq:Zh}
\Zh = \{\vv_h \in \Vh^d : \la \div \vv_h,q_h\ra=0 \ \forall q_h \in \Qh\}    
\end{align}
stands for the space of discrete divergence free functions which play an important role in the discretization error analysis of flow problems.

By $\pi_h^0 : L^2(\Omega) \to \Vh$ and $\pi_h^1 : H^1(\Omega) \to \Vh$, we denote the $L^2$- and $H^1$-orthogonal projection operators, respectively, defined by 
\begin{alignat}{2}
\la \pi_h^0 u - u, v_h\ra &= 0 \qquad && \forall v_h \in \Vh, \label{eq:defl2proj}\\
\la \pi_h^1 u - u, v_h\ra + \la \nabla (\pi_h^1 u - u ), \nabla v_h \ra &= 0 \qquad && \forall v_h \in \Vh.\label{eq:defh1proj}
\end{alignat}
%
%
Similar notation is used for the projections of vector valued functions.
We further define the $L^2$-orthogonal projection $\tilde \pi_h^0 : L_0^2(\Omega) \to \Qh$ for the pressure space by
\begin{align}
    \la \tilde \pi_h^0 p - p, q_h\ra &= 0 \qquad \forall q_h \in \Qh.
\end{align}
We will further make use of the modified Stokes projector $\P^1_h : H^1(\Omega)^d \to \Zh \subset \Vh^d$, which is defined by the variational problem
\begin{alignat}{2}
\la \uu - \P_h^1 \uu,\nabla\vv_h \ra + \la \nabla (\uu - \P_h^1 \uu), \nabla \vv_h\ra &= 0 \qquad && \forall \vv_h \in \Zh. \label{eq:defnmodstokes}
\end{alignat}
Let us emphasize that $\P_h^1 \uu \in \Zh$ is discrete divergence free and that only such functions are used as test functions above. 
For convenience of the reader, some well-known approximation properties of these projection operators are summarized in Appendix~\ref{app:proj}.

\subsection*{Time discretization}

We also employ piecewise polynomial functions for the approximation in time. 
For ease of presentation, we consider a uniform grid 
\begin{itemize}\itemsep1ex
\item[(A6)] $\Itau:=\{0=t^0,t^1,\ldots,t^N=T\}$ \quad with time steps $t^n = n \tau$ and $\tau = T/N$.
\end{itemize}
More general non-uniform grids could be treated with minor modifications.
By 
\begin{align}
P_k(\Itau;X) 
\qquad \text{and} \qquad 
P_k^c(\Itau;X) = P_k(\Itau;X) \cap C(0,T;X),
\end{align}
we denote the spaces of discontinuous, respectively, continuous piecewise polynomial functions of degree $\le k$ over the time grid $\Itau$, with values in some vector space $X$.
We utilize a bar symbol $\bar u$ to denote functions in $P_0(\Itau;X)$ that are piecewise constant in time.

In our analysis, we use the piecewise linear interpolation
$\I_\tau^1:H^1(0,T;X)\to P_1^c(\Itau;X)$ as well as the $L^2$-orthogonal projection
$\bar \pi_\tau^0 : L^2(0,T;X) \to \Pi_0(\Itau;X)$ to piecewise constants in time. 
Properties of these operators are again summarized in Appendix~\ref{app:proj}. 
For ease of presentation, we will frequently use 
$\bar u = \bar \pi_\tau^0 u$ also to abbreviate the piecewise constant projection in time of a function $u \in L^2(X)$. 
For abbreviation, we introduce 
\begin{align} \label{eq:abn}
    \la a, b\ra^n := \int_{t^{n-1}}^{t^n} \la a(s), b(s) \ra \,ds,
\end{align}
and we write $\la a \cdot \nabla b, c\ra_\skw^n = \int_{t^{n-1}}^{t^n} \la a \cdot \nabla b, c\ra_\skw \, ds$ for the skew-symmetric variant.


%

\section{Discretization method and main results} \label{sec:main}

We are now in the position to introduce our discretization method and state our main results. As approximation of the initial value problem for
\eqref{eq:chns1}--\eqref{eq:chns4}, we consider the following scheme, which is motivated by the variational characterization  \eqref{eq:weak1}--\eqref{eq:weak4} of solutions.

\begin{problem}\label{prob:full} 
Find $\phi_{h,\tau} \in P_1^c(\Itau,\Vh)$, $\bar\mu_{h,\tau} \in P_0(\Itau;\Vh)$,  $\uu_{h,\tau} \in P_1^c(\Itau;\Vh^d)$ and $\bar p_{h,\tau} \in P_0(\Itau;\Qh)$, such that 
$\uu_{h,\tau}(0) = \P_h^1 \uu(0)$, $\phi_{h,\tau}(0) = \pi_h^1 \phi(0)$,  
and 
\begin{align}
\la \dt\phi_{h,\tau}, \bar\psi_{h,\tau}\ra^n 
  &= \la\phi_{h,\tau}\bar\uu_{h,\tau},\nabla\bar\psi_{h,\tau}\ra^n - \la b(\bar\phi_{h,\tau})\nabla\bar\mu_{h,\tau}\nabla\bar\psi_{h,\tau}\ra^n, \label{eq:pg1}\\
\la \bar\mu_{h,\tau},\bar\xi_{h,\tau}  \ra^n & = \gamma\la \nabla\phi_{h,\tau},\nabla\bar\xi_{h,\tau}\ra^n + \la f'(\phi_{h,\tau}),\bar\xi_{h,\tau}\ra^n, \label{eq:pg2}\\
 \la \dt \uu_{h,\tau} , \bar\vv_{h,\tau} \ra^n
&= \la \uu_{h,\tau} \cdot \nabla \bar\uu_{h,\tau},\bar\vv_{h,\tau}\ra_\skw^n -\la \eta(\bar\phi_{h,\tau}) \nabla\bu_{h,\tau}, \nabla\bar\vv_{h,\tau} \rangle^n \label{eq:pg3}\\
&\qquad \qquad +\langle \bar p_{h,\tau}, \div \bar\vv_{h,\tau}\ra^n - \la \phi_{h,\tau}\bar\vv_{h,\tau},\nabla\bar\mu_{h,\tau}\ra^n, \notag  \\
\la \div \uu_{h,\tau}, \bar q_{h,\tau} \ra^n &= 0, \label{eq:pg4}
\end{align}
for all $\bar\psi_{h,\tau},\bar \xi_{h,\tau} \in P_0(\Itau;\Vh)$, $\bar\vv_{h,\tau} \in P_0(\Itau;\Vh^d)$, $\bar q_{h,\tau} \in P_0(\Itau;\Qh)$ and all $n \le N$.
\end{problem}
\noindent
Recall that functions with a bar symbol are piecewise constant in time, and that $\bar \phi_{h,\tau} = \bar \pi_\tau^0 \phi_{h,\tau}$ also denotes the $L^2$-orthogonal projection onto such functions.

\begin{remark}
The proposed scheme consists of a mixed finite element approximation in space together with a variational time-discretization approach, akin to the average vector field method \cite{Gonzales96,McLachlanEtAl99}.
Since the test functions are discontinuous in time, the system~\eqref{eq:pg1}--\eqref{eq:pg4} yields an implicit time-stepping scheme, similar to a Crank-Nicolson method; see \cite{Akrivis11,AndreevSchweitzer2014} for details. 
The finite element pair employed for the fluid flow problem amounts to the Taylor-Hood element and is discrete inf-sup stable; see \cite{GiraultRaviart,John2016}. 
Further note that the approximation order of the finite element spaces and the time discretization are fully balanced. 
As we will show below, the proposed method is in fact second order accurate in space and time, and in all variables. 
\end{remark}

\subsection*{Well-posedness}
As a first step of our analysis, let us state the well-posedness of the discrete problem and summarize some elementary properties of its solutions. 

\begin{theorem} \label{thm:well-posed}
Let (A0)--(A6) hold. Then, for any $\phi(0) \in H^1(\Omega)$, $\uu(0) \in H^1(\Omega)^d$, and $h,\tau>0$, Problem~\ref{prob:full} has at least one solution. 
Moreover, any solution of  \eqref{eq:pg1}--\eqref{eq:pg4} satisfies 
\begin{align}
\int_\Omega \phi_{h,\tau}(t^n) \,dx 
&= \int_\Omega \phi_{h,\tau}(t^m) \,dx 
\qquad \text{and} \qquad \qquad \qquad \qquad \qquad \label{eq:mass_disc}\\ \E(\phi_{h,\tau}(t^n),\uu_{h,\tau}(t^n)) &+ \int_{t^m}^{t^n} \D_{\bar\phi_{h,\tau}}(\bmu_{h,\tau},\bu_{h,\tau}) \, ds = \E(\phi_{h,\tau}(t^m),\uu_{h,\tau}(t^m))
\label{eq:energy_disc}
\end{align}
for all discrete time steps 
$t^m \le t^n \in \Itau$, with energy- and dissipation functionals 
\begin{align*}
\E(\phi,\uu)) = \int_\Omega \frac{\gamma}{2} |\nabla \phi|^2 + f(\phi) + \frac{1}{2} |\uu|^2 \, dx 
\quad \text{and} \quad 
\D_\phi(\mu,\uu) = \int_\Omega b(\phi) |\nabla \mu|^2 + \eta(\phi) |\nabla \uu|^2 \, dx.
\end{align*}
\end{theorem}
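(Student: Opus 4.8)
The plan is to prove the three assertions separately. The key structural observation is that, since all test functions in \eqref{eq:pg1}--\eqref{eq:pg4} are piecewise constant in time, the scheme decouples into a sequence of nonlinear problems, one per interval $[t^{n-1},t^n]$: given $\phi_{h,\tau}(t^{n-1})$, $\uu_{h,\tau}(t^{n-1})$, the continuous piecewise linear fields are fixed by their new nodal values $\phi_{h,\tau}(t^n)$, $\uu_{h,\tau}(t^n)$ together with the piecewise constants $\bar\mu_{h,\tau}$, $\bar p_{h,\tau}$ on that interval. Both identities \eqref{eq:mass_disc}--\eqref{eq:energy_disc} can then be established per step and summed. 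Mass conservation \eqref{eq:mass_disc} follows at once by testing \eqref{eq:pg1} with the constant $\bar\psi_{h,\tau}\equiv 1\in\Vh$: the right-hand side vanishes since $\nabla 1 =0$, while the left-hand side telescopes to $\int_\Omega[\phi_{h,\tau}(t^n)-\phi_{h,\tau}(t^{n-1})]\,dx$, where the conservative form of the convection term is essential.

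For the energy law I would imitate the continuous computation \eqref{eq:energy} by testing \eqref{eq:pg1} with $\bar\psi_{h,\tau}=\bar\mu_{h,\tau}$, \eqref{eq:pg2} with $\bar\xi_{h,\tau}=\dt\phi_{h,\tau}$ (admissible, since $\phi_{h,\tau}$ piecewise linear gives $\dt\phi_{h,\tau}\in P_0(\Itau;\Vh)$), \eqref{eq:pg3} with $\bar\vv_{h,\tau}=\bar\uu_{h,\tau}$, and \eqref{eq:pg4} with $\bar q_{h,\tau}=\bar p_{h,\tau}$. Because $\dt\phi_{h,\tau}$ and $\bar\uu_{h,\tau}=\tfrac12(\uu_{h,\tau}(t^{n-1})+\uu_{h,\tau}(t^n))$ are time-constant on the interval, exact integration (the fundamental theorem of calculus applied along the linear paths) turns the time-derivative pairings into the differences of $\tfrac{\gamma}{2}\|\nabla\phi_{h,\tau}\|^2$, $\int_\Omega f(\phi_{h,\tau})$ and $\tfrac12\|\uu_{h,\tau}\|^2$ evaluated at $t^n$ minus $t^{n-1}$, i.e. exactly the increment of $\E$. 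The skew term vanishes by \eqref{eq:skw} with coinciding second and third arguments, the pressure term vanishes by \eqref{eq:pg4}, and the two coupling contributions $\pm\la\phi_{h,\tau}\bar\uu_{h,\tau},\nabla\bar\mu_{h,\tau}\ra^n$ coming from \eqref{eq:pg1} and \eqref{eq:pg3} are identical and cancel. What remains is precisely $-\int_{t^{n-1}}^{t^n}\D_{\bar\phi_{h,\tau}}(\bar\mu_{h,\tau},\bar\uu_{h,\tau})\,ds$, and summation yields \eqref{eq:energy_disc}. This exact cancellation is the sense in which the scheme is structure preserving.

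Existence is the part requiring real work. On a single step I would eliminate the pressure by passing to the discretely divergence-free space $\Zh$: testing \eqref{eq:pg4} with $\bar q_{h,\tau}$ forces $\bar\uu_{h,\tau}\in\Zh$, hence $\uu_{h,\tau}(t^n)\in\Zh$ provided $\uu_{h,\tau}(t^{n-1})\in\Zh$, which holds inductively since $\uu_{h,\tau}(0)=\P_h^1\uu(0)\in\Zh$; inf-sup stability of the Taylor-Hood pair \cite{GiraultRaviart} then reconstructs $\bar p_{h,\tau}$ afterwards. Equation \eqref{eq:pg2} determines $\bar\mu_{h,\tau}$ as a continuous function of $\phi_{h,\tau}(t^n)$, so that \eqref{eq:pg1} and the $\Zh$-reduced \eqref{eq:pg3} become a finite-dimensional system $\mathcal{F}(\phi_{h,\tau}(t^n),\uu_{h,\tau}(t^n))=0$ on $\Vh\times\Zh$. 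I would solve it with a standard corollary of Brouwer's fixed-point theorem: a continuous map satisfying $\la\mathcal{F}(w),w\ra\ge 0$ on a large enough sphere has a zero inside. The needed a-priori bound is exactly the per-step energy estimate from the previous paragraph, which controls $\|\nabla\phi_{h,\tau}(t^n)\|$, $\|\uu_{h,\tau}(t^n)\|$ and $\int_\Omega f(\phi_{h,\tau}(t^n))$ by the previous energy; combined with mass conservation and a Poincar\'e inequality this bounds $\|\phi_{h,\tau}(t^n)\|_{H^1}$.

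The main obstacle lies in making this coercivity argument rigorous. The energy balance is obtained by testing with $\bar\mu_{h,\tau}$, $\dt\phi_{h,\tau}$, $\bar\uu_{h,\tau}$, which are not the unknowns $w=(\phi_{h,\tau}(t^n),\uu_{h,\tau}(t^n))$ themselves, so the map $\mathcal{F}$ and the inner product on $\Vh\times\Zh$ must be arranged so that the residual pairing $\la\mathcal{F}(w),w\ra$ reproduces the energy increment plus dissipation. This is complicated by the nonconvex potential, which I would handle using $f\ge 0$ and $f''\ge -f_1$ together with the growth bounds in (A3), and by the need to verify continuity of the time-integrated nonlinearity $\int_{t^{n-1}}^{t^n}f'(\phi_{h,\tau}(s))\,ds$ and of $b(\bar\phi_{h,\tau})$, $\eta(\bar\phi_{h,\tau})$ in the iterate. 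Uniqueness is not asserted here and would be left to the subsequent relative-energy stability analysis.
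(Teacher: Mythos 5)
Your treatment of \eqref{eq:mass_disc} and \eqref{eq:energy_disc} is correct and coincides with the paper's proof: testing \eqref{eq:pg1} with $\bar\psi_{h,\tau}\equiv 1$ for mass, and testing \eqref{eq:pg1}--\eqref{eq:pg4} with $(\bar\mu_{h,\tau},\,\dt\phi_{h,\tau},\,\bar\uu_{h,\tau},\,\bar p_{h,\tau})$ for the energy law, using $\la \dt\uu_{h,\tau},\uu_{h,\tau}\ra^n=\la \dt\uu_{h,\tau},\bar\uu_{h,\tau}\ra^n$, skew-symmetry, exact cancellation of the two coupling terms, and induction over $n$; all of this matches the paper step by step.

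The existence argument is where you depart from the paper, and as proposed it contains a genuine gap --- one you flag yourself but do not close. After eliminating the pressure (which is fine: \eqref{eq:pg4} and induction give $\uu_{h,\tau}(t^{n-1})\in\Zh$, and $\bar p_{h,\tau}$ is recovered afterwards from \eqref{eq:infsup}) and eliminating $\bar\mu_{h,\tau}$ through \eqref{eq:pg2}, your unknown is $w=(\phi_{h,\tau}(t^n),\uu_{h,\tau}(t^n))$ and the Brouwer corollary you invoke requires $\la\mathcal{F}(w),w\ra\ge 0$ on a large sphere \emph{for a fixed inner product on $\Vh\times\Zh$}. But the energy estimate is generated by testing \eqref{eq:pg1} with $\bar\mu_{h,\tau}$, and after your elimination $\bar\mu_{h,\tau}$ is a \emph{nonlinear} function of $w$ (through $f'(\phi_{h,\tau})$). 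Consequently no fixed inner product can make the residual pairing $\la\mathcal{F}(w),w\ra$ reproduce the energy increment plus dissipation: pairing the Cahn--Hilliard residual against $\phi_{h,\tau}(t^n)$ itself produces terms such as $\la b(\bar\phi_{h,\tau})\nabla\bar\mu_{h,\tau},\nabla\phi_{h,\tau}(t^n)\ra^n$ and $\la\dt\phi_{h,\tau},\phi_{h,\tau}(t^n)\ra^n$ that carry no sign. Nor can you re-parametrize by $\mu^n$ instead, since with the nonconvex $f$ of (A3) the relation \eqref{eq:pg2} is not globally invertible. So the hypothesis of your fixed-point lemma cannot be verified along the route you describe, and the final step of the plan would fail.

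The paper sidesteps precisely this misalignment by \emph{not} eliminating anything: it keeps $x_h=(\dt\phi_h^n,\mu_h^n,\uu_h^n,p_h^n)$ as the unknown vector, writes the time step as a fixed-point equation $x_h=T_h x_h$, and invokes the Leray--Schauder principle, which only requires uniform a-priori bounds for all solutions of $x_h=\lambda T_h x_h$, $0\le\lambda\le 1$. These bounds follow from exactly the energy testing you already carried out, because the energy test vector $(\mu_h^n,\dt\phi_h^n,\bar\uu_{h,\tau},\bar p_{h,\tau})$ is then a permutation of the components of $x_h$ itself --- no sign condition of a residual against $w$ is ever needed. To repair your argument you should either enlarge the unknowns in the same way and use a ``crossed'' pairing (the residual of \eqref{eq:pg1} paired against the $\mu$-slot and that of \eqref{eq:pg2} against the $\dt\phi$-slot, so that testing with $y_h=x_h$ yields the energy balance), or replace the sphere-coercivity corollary of Brouwer by a homotopy/degree argument of Leray--Schauder type in which only a-priori estimates along the family are required.
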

\noindent 
The main steps of the proof of this result are presented in Section~\ref{sec:proof:thm:well-posed}.
Let us emphasize that, by our variational discretization approach, the important properties \eqref{eq:mass} and \eqref{eq:energy} of solutions to the system \eqref{eq:chns1}--\eqref{eq:chns4} carry over verbatim to solutions of the discrete problem. 

\subsection*{Error estimates}
In order to obtain quantitative convergence rates, we additionally have to assume that the true solution is sufficiently smooth. 
For our analysis, we require
\begin{itemize}
\item[(A7)] a sufficiently regular solution of \eqref{eq:chns1}--\eqref{eq:chns4} satisfying
\begin{alignat*}{2}
\qquad \quad
\phi&\in H^{2}(0,T;H^1(\Omega))\cap H^1(0,T;H^3(\Omega)),\quad & 
\mu& \in H^2(0,T;H^1(\Omega))\cap  L^2(0,T;H^{3}(\Omega)),
\\
\uu& \in  H^2(0,T;H^1(\Omega))\cap  H^1(0,T;H^{3}(\Omega)), & 
p & \in L^2(0,T;H^2(\Omega))\cap H^2(0,T;L^2(\Omega)).
\end{alignat*}
\end{itemize}
Under these assumptions, the solution of the corresponding initial value problem is unique. Further note that the regularity requirements are weaker than the ones used for the convergence analysis in \cite{Diegel2017}. Note that these regularity assumptions are already required to ensure optimal convergence rates for the interpolation and projection errors.
We can now state and prove the main results of this paper. 
%

%
\begin{theorem}
\label{thm:fulldisk}
Let (A0)--(A7) hold and $0<h,\tau \le c$ sufficiently small.
Then 
\begin{align}\label{eq:con_res_1}
\|\phi - \phi_{h,\tau}\|_{L^\infty(H^1)}
&+ \| \uu - \uu_{h,\tau}\|_{L^\infty(L^2)} \\
&+ \|\bar\mu - \bmu_{h,\tau}\|_{L^2(H^1)} + \|\bar\uu - \bar\uu_{h,\tau}\|_{L^2(H^1)}
\leq C \, (h^2 + \tau^2),\notag
\end{align}
with a constant $C$ depending only the bounds for the parameters and the solution, but not on $h$ and $\tau$.
For the choice $\tau=c' h$, the discrete solution is unique and 
further
\begin{align}
 \|\bar p - \bar p_{h,\tau} \|_{L^2(L^2)}  \leq C (h^2 + \tau^2).  \label{eq:con_res_2}
\end{align}
Recall that $\bar \mu = \bar \pi_\tau^0 \mu$ and $\bar p = \bar \pi_\tau^0 p$ denote the piecewise constant averages in time.
\end{theorem}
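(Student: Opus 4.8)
The plan is to prove the convergence result via a standard error-splitting into projection (interpolation) errors and a discrete error, the latter controlled by a relative-energy (weak-strong) stability argument applied to the discrete system. I would first split each unknown as, e.g., $\phi - \phi_{h,\tau} = (\phi - \pi_h^1 I_\tau^1 \phi) + (\pi_h^1 I_\tau^1 \phi - \phi_{h,\tau})$, and analogously for $\uu$ using the modified Stokes projector $\P_h^1$ together with the time interpolant, for $\mu$ using $\pi_h^1$ and $\bar\pi_\tau^0$, and for $p$ using $\tilde\pi_h^0$ and $\bar\pi_\tau^0$. The projection parts are bounded by $C(h^2+\tau^2)$ directly from the approximation properties collected in the appendix, given the regularity (A7); the heart of the matter is the discrete error between the projected exact solution and the computed solution.

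The key step is to derive an equation for the discrete error by inserting the projected exact solution into the scheme \eqref{eq:pg1}--\eqref{eq:pg4} and subtracting the continuous variational identities \eqref{eq:weak1}--\eqref{eq:weak4} tested against the same discrete test functions. This produces a perturbed discrete CHNS system whose right-hand side consists of consistency/projection residuals that are $O(h^2+\tau^2)$ in the appropriate norms. I would then set up the relative energy functional mirroring $\E$, namely a quadratic-type distance built from $\frac{\gamma}{2}\|\nabla(\cdot)\|^2$, the Bregman distance of $f$, and $\frac12\|\cdot\|_{L^2}^2$ in the velocity, and estimate its discrete time increment. Testing the error equations with the discrete error quantities (the phase error against the chemical-potential error, the velocity error against itself, etc.), exploiting the skew-symmetry of the convective terms and the discrete divergence-free structure of $\Zh$ so that the pressure drops out, should yield a discrete Grönwall inequality of the form $\|e^n\|^2 \le \|e^{m}\|^2 + C\tau\sum \|e\|^2 + C(h^2+\tau^2)^2$, giving the bound \eqref{eq:con_res_1} for $\phi,\uu,\bar\mu$.

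The main obstacle will be the nonlinear terms: the concentration-dependent mobility $b(\phi)$ and viscosity $\eta(\phi)$, the double-well derivative $f'(\phi)$, and the transport/coupling terms $\phi\nabla\mu$ and $\phi\uu$. Handling these requires Lipschitz-type estimates from (A2)--(A4) combined with discrete $L^\infty$ or $L^4$ bounds on $\phi_{h,\tau}$ and $\uu_{h,\tau}$; here the balanced second-order spaces and inverse/Sobolev embedding inequalities are essential, and the smallness condition $0<h,\tau\le c$ is what absorbs the resulting nonlinear error contributions into the linear part via a bootstrap. For the nonconvex part of $f$, the bound $f''\ge -f_1$ from (A3) lets me control the indefinite Bregman contribution by the Grönwall term.

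For the pressure estimate \eqref{eq:con_res_2}, I would recover $\bar p - \bar p_{h,\tau}$ from the discrete momentum equation \eqref{eq:pg3} tested against test functions that are \emph{not} discrete divergence free, using the discrete inf-sup (LBB) stability of the Taylor--Hood pair: the pressure error norm is controlled by the residual of the momentum balance, which in turn involves the discrete time derivative $\dt(\uu - \uu_{h,\tau})$. Because this term costs an inverse power of $\tau$ when bounded by the already-established velocity error, the coupling $\tau = c'h$ is precisely what balances $h^{-1}$ against $\tau$ to keep the estimate at order $h^2+\tau^2$; the same scaling, via a contraction/fixed-point or a sharpened Grönwall argument under the smallness of $h,\tau$, is what upgrades existence to uniqueness of the discrete solution.
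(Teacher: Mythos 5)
Your plan for the estimate \eqref{eq:con_res_1} coincides with the paper's proof in all essentials: the same splitting into projection and discrete errors (the paper takes $\hat\phi_{h,\tau}=\I_\tau^1\pi_h^1\phi$, $\hat\uu_{h,\tau}=\I_\tau^1\P_h^1\uu$, and --- a harmless deviation from your choice --- the $L^2$-projection $\pi_h^0$ rather than $\pi_h^1$ for $\mu$), the interpretation of the projected solution as a solution of a perturbed discrete system with $O(h^2+\tau^2)$ residuals (Lemma~\ref{lem:error_residual}, Lemma~\ref{lem:residual}), a relative-energy stability estimate built from the gradient term, the Taylor remainder of $f$ regularized by $\frac{\alpha}{2}\|\cdot\|_{L^2}^2$ using $f''\ge -f_1$, and the velocity $L^2$-distance (Lemma~\ref{lem:fullstab}), followed by a discrete Gronwall argument. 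One correction of emphasis: in this part no bootstrap and no inverse inequalities are needed; the nonlinear terms are handled with the unconditional $L^\infty(H^1)\times L^\infty(L^2)$ bounds coming from the discrete energy law (Lemma~\ref{lem:apriori}) together with smoothness of the projected exact solution, and the smallness condition on $\tau$ enters only to run the Gronwall step.

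The genuine gap is in your pressure argument. You propose to bound the term $\|\dt\uu_{h,\tau}-\dt\hat\uu_{h,\tau}\|_{L^2(H^{-1})}$ appearing in the inf-sup reconstruction by an inverse estimate in time applied to the already-established velocity error, accepting a factor $\tau^{-1}$ and ``balancing'' it through $\tau=c'h$. This cannot yield \eqref{eq:con_res_2}: the resulting bound is $\tau^{-1}(h^2+\tau^2)=O(h+\tau)$ for $\tau\approx h$, a full order short of the claimed rate. The paper avoids this loss by a separate duality argument (Lemma~\ref{lem:time_derivative}): the momentum error equation is tested with $\Ah^{-1}\dt(\uu_{h,\tau}-\hat\uu_{h,\tau})$, where $\Ah$ is the discrete Stokes operator on $\Zh$, and the norm equivalences for $\Ah^{-1/2}$ together with the $H^1$-stable discrete Leray projection (Lemmas~\ref{lem:ayuso1} and \ref{lem:ayuso2}) give $\|\dt\uu_{h,\tau}-\dt\hat\uu_{h,\tau}\|_{L^2(H^{-1})}\le C(h^2+\tau^2)$ with no inverse power of $\tau$ at all. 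The coupling $\tau=c'h$ is in fact consumed elsewhere: in the proof of Lemma~\ref{lem:perropart1} it absorbs the factor $h^{-3}$ from the spatial inverse inequality \eqref{eq:inverse} needed for an $L^\infty$ bound on $\bar\uu_{h,\tau}-\hbu_{h,\tau}$ in the convective term, and in the uniqueness proof (Lemma~\ref{lem:res2}) it makes the space-time inverse estimate $\tau^{-1/2}h^{-1/2}\|\cdot\|_{L^2(H^1)}$ produce uniform $L^\infty(W^{1,3})$ bounds on the discrete solution, so that the residuals of a linearized perturbed system for a second solution are controlled and Gronwall with vanishing initial error forces coincidence --- a mechanism considerably more specific than the contraction or sharpened-Gronwall argument you gesture at.
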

\noindent 
The main steps of the proof of these assertions are presented in Sections~\ref{sec:proof:thm:fulldisk_part1} and \ref{sec:proof:thm:fulldisk_part2}. 

%

\begin{remark}
Let us note that quantitative error estimates for fully discrete approximations to the Cahn-Hilliard-Navier-Stokes system were also established in \cite{CaiShen18,LiShen2020,LiuRiviere2020}.
In \cite{Diegel2017}, which is  closest to our results, error estimates $O(\tau^2 + h^q)$ are proven for a two-step implicit time stepping scheme combined with a finite element approximation in space using $P_{q}$--$P_{q}$--$P_{q+1}$--$P_{q}$ elements.
For $q=2$, the convergence rates correspond to that of Theorem~\ref{thm:fulldisk}, but the approximation orders for $\uu$ and $p$ are chosen higher by one order, in comparison to our method. 
Moreover, stronger smoothness assumptions on the solution are required in \cite{Diegel2017}.
None of the above works considers concentration dependent mobilities and viscosities, and convergence rates for the pressure cannot be found in the literature so far. 
\end{remark}

\section{Proof of Theorem~\ref{thm:well-posed}}
\label{sec:proof:thm:well-posed}

We start with verifying that any solution of Problem~\ref{prob:full} satisfies the mass- and energy balance, and then briefly comment on the existence of solutions. If not stated otherwise, we use $H^k=H^k(\Omega)$ and $L^p(X)=L^p(0,T;X)$ for abbreviation. For convenience of the reader, the proofs of some technical results are shifted to the appendix.

\subsection{Verification of \eqref{eq:mass_disc}--\eqref{eq:energy_disc}}
Let us first note that 
\begin{align*}
\int_\Omega \phi_{h,\tau}(t^n) \, dx
&= \int_\Omega \phi_{h,\tau}(t^{n-1}) \, dx 
 + \int_{t^{n-1}}^{t^n} \frac{d}{dt}\la  \phi_{h,\tau},1 \ra \, dt 
=  \int_\Omega \phi_{h,\tau}(t^{n-1}) \, dx + \la \dt \phi_{h,\tau}, 1\ra^n.
\end{align*}
The last term amounts to the left hand side of \eqref{eq:pg1} tested with $\bar \psi_{h,\tau} = 1$ on $(t^{n-1},t^n)$, which is an admissible test function. 
Since $\bar \psi_{h,\tau}$ is constant in space, the right hand side of \eqref{eq:pg1}, and hence also the last term in the above equation, vanishes. This yields \eqref{eq:mass_disc} for $m=n-1$; the case $m < n-1$ then follows readily by induction.  

In order to show the discrete energy--dissipation identity~\eqref{eq:energy_disc}, we note that
\begin{align*}
\E(\phi_{h,\tau}(t^n),\uu_{h,\tau}(t^n))
&= \E(\phi_{h,\tau}(t^{n-1}),\uu_{h,\tau}(t^{n-1})) + \int_{t^{n-1}}^{t^n} \frac{d}{dt} \E(\phi_{h,\tau},\uu_{h,\tau}) \,ds.
\end{align*}
From the definition of the energy functional, we further conclude that 
\begin{align*}
\frac{d}{dt} \E(\phi_{h,\tau},\uu_{h,\tau}) 
&= \int_\Omega \gamma \nabla \dt\phi_{h,\tau}\cdot \nabla  \phi_{h,\tau} + \dt \phi_{h,\tau} f'(\phi_{h,\tau})  + \dt \uu_{h,\tau} \cdot \uu_{h,\tau} \, dx.
\end{align*}
For ease of notation, the time dependence of the functions was not explicitly stated here. 
After integration in time and using the definition~\eqref{eq:abn} of $\la a, b\ra^n$, we then obtain  
\begin{align*}
\int_{t^{n-1}}^{t^n} \frac{d}{dt} &\E(\phi_{h,\tau},\uu_{h,\tau}) \,ds\\
&= \la \gamma \nabla \dt\phi_{h,\tau}, \nabla  \phi_{h,\tau} \ra^n + 
\la \dt \phi_{h,\tau}, f'(\phi_{h,\tau}) \ra^n + \la \dt \uu_{h,\tau}, \bar \uu_{h,\tau} \ra^n. 
\end{align*}
For the last term, we used that $\la \dt \uu_{h,\tau},\uu_{h,\tau}\ra^n = \la \dt \uu_{h,\tau},\bar \uu_{h,\tau}\ra^n$ with $\bar \uu_{h,\tau}=\bar \pi_\tau^0 \uu_{h,\tau}$, since the function $\dt \uu_{h,\tau}$ is piecewise constant in time.
The three terms on the right hand side of the above formula also appear in \eqref{eq:pg2}--\eqref{eq:pg3}, when testing with $\bar \xi_{h,\tau} = \dt \phi_{h,\tau}$ and $\bar \vv_{h,\tau} = \bar \uu_{h,\tau}$, which is admissible.
Testing the identities \eqref{eq:pg1} and \eqref{eq:pg4} with $\bar \psi_{h,\tau}=\bar \mu_{h,\tau}$ and $\bar q_{h,\tau} = \bar p_{h,\tau}$, which is again admissible, and adding the resulting equations then leads to 
\begin{align*}
\int_{t^{n-1}}^{t^n} \frac{d}{dt} \E(\phi_{h,\tau},\uu_{h,\tau}) \,ds
&= - \la b(\bar \phi_{h,\tau}) \nabla \bar \mu_{h,\tau}, \nabla \bar \mu_{h,\tau}\ra^n - \la \eta(\bar \phi_{h,\tau}) \nabla \bar \uu_{h,\tau}, \nabla \bar \uu_{h,\tau} \ra^n.
\end{align*}
Here we used that $\la \mathbf{a} \cdot \nabla \mathbf{b}, \mathbf{c} \ra_{skw}=0$ for $\mathbf{b}=\mathbf{c}$, and that all the remaining terms cancel out each other. 
Together with the definition of $\D_\phi(\mu,\uu)$, this leads to \eqref{eq:energy_disc} for $m=n-1$; the case $m < n-1$ again follows by induction.
\qed

\subsection{A-priori bounds}

From the basic properties \eqref{eq:mass_disc}--\eqref{eq:energy_disc} and our assumptions on the model parameters, we can immediately deduce the following result.
\begin{lemma} \label{lem:apriori}
Let (A0)--(A7) hold. Then any solution of Problem~\ref{prob:full} satisfies
\begin{align} \label{eq:bounds}
\|\phi_{h,\tau}\|_{L^\infty(H^1)} + \|\uu_{h,\tau}\|_{L^\infty(L^2)} + \|\bar \uu_{h,\tau}\|_{L^2(H^1)} + \|\bar \mu_{h,\tau}\|_{L^2(H^1)} \le \hat C
\end{align}
with a constant $\hat C$ depending only on the bounds for the parameters and the initial data.
Furthermore, the pressure is bounded by $\|\bar p_{h,\tau}\|_{L^2(L^2)} \le C(\hat C,\tau)$.
Recall that $\bar \uu_{h,\tau}$, $\bar \mu_{h,\tau}$, and $\bar p_{h,\tau}$ are piecewise constant functions in time.
\end{lemma}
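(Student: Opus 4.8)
The plan is to extract the first four bounds directly from the energy--dissipation identity \eqref{eq:energy_disc} together with the mass conservation \eqref{eq:mass_disc}, and to obtain the pressure bound separately from the momentum equation \eqref{eq:pg3} via discrete inf--sup stability. First I would take $t^m=0$ in \eqref{eq:energy_disc}, so that
\[
\E(\phi_{h,\tau}(t^n),\uu_{h,\tau}(t^n)) + \int_0^{t^n}\D_{\bar\phi_{h,\tau}}(\bar\mu_{h,\tau},\bar\uu_{h,\tau})\,ds = \E(\phi_{h,\tau}(0),\uu_{h,\tau}(0)).
\]
Since $\phi_{h,\tau}(0)=\pi_h^1\phi(0)$ and $\uu_{h,\tau}(0)=\P_h^1\uu(0)$, the $H^1$-stability of the projectors together with (A3) and the embedding $H^1\hookrightarrow L^6$ bound the right-hand side by a constant depending only on $\|\phi(0)\|_{H^1}$ and $\|\uu(0)\|_{H^1}$, uniformly in $h,\tau$. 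Because $f\ge 0$ by (A3), every summand of $\E$ is nonnegative, so this immediately yields the bounds for $\|\nabla\phi_{h,\tau}\|_{L^\infty(L^2)}$ and $\|\uu_{h,\tau}\|_{L^\infty(L^2)}$.

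To complete the $H^1$-norm of $\phi_{h,\tau}$, I would apply the Poincaré--Wirtinger inequality, which requires control of the spatial average of $\phi_{h,\tau}$; this is exactly the quantity conserved by \eqref{eq:mass_disc}, fixed at $\int_\Omega\pi_h^1\phi(0)\,dx$. For the dissipative terms, the lower bounds $b\ge b_1$ and $\eta\ge\eta_1$ from (A2) and (A4) turn $\int_0^T\D_{\bar\phi_{h,\tau}}$ into control of $\|\nabla\bar\mu_{h,\tau}\|_{L^2(L^2)}$ and $\|\nabla\bar\uu_{h,\tau}\|_{L^2(L^2)}$. The remaining zeroth-order parts are then routine: $\bar\uu_{h,\tau}$ inherits the $L^\infty(L^2)$ bound of $\uu_{h,\tau}$ since the piecewise-constant projection in time is a local average, while the average of $\bar\mu_{h,\tau}$ is obtained by testing \eqref{eq:pg2} with the spatially constant function $\bar\xi_{h,\tau}=1$, giving $\la\bar\mu_{h,\tau},1\ra^n=\la f'(\phi_{h,\tau}),1\ra^n$, which is bounded using the growth condition on $f'$ in (A3) and the $L^\infty(H^1)$ bound on $\phi_{h,\tau}$. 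A second application of Poincaré--Wirtinger then delivers $\|\bar\mu_{h,\tau}\|_{L^2(H^1)}$.

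For the pressure, I would invoke the discrete inf--sup stability of the Taylor--Hood pair $(\Vh,\Qh)$, which bounds $\|\bar p_{h,\tau}(t)\|_{L^2}$ pointwise in time by the supremum of $\la\bar p_{h,\tau},\div\bar\vv_h\ra/\|\bar\vv_h\|_{H^1}$ over $\bar\vv_h\in\Vh^d$. Rearranging \eqref{eq:pg3} expresses this pressure pairing through the remaining terms, which I would estimate individually: the viscous term via $\eta\le\eta_2$ and the gradient bound on $\bar\uu_{h,\tau}$; the coupling term $\la\phi_{h,\tau}\bar\vv_h,\nabla\bar\mu_{h,\tau}\ra$ via the $H^1$ bound on $\phi_{h,\tau}$ and the $L^2(L^2)$ bound on $\nabla\bar\mu_{h,\tau}$; and the convective term using its skew-symmetric structure together with Hölder and Sobolev embeddings. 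The time-derivative term $\la\dt\uu_{h,\tau},\bar\vv_h\ra$ is the only one forcing a loss: since $\dt\uu_{h,\tau}$ is the piecewise-constant difference quotient $(\uu_{h,\tau}(t^n)-\uu_{h,\tau}(t^{n-1}))/\tau$, its $L^2$ norm carries a factor $\tau^{-1}$, which is precisely the source of the $\tau$-dependence in the stated bound $C(\hat C,\tau)$.

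I expect the pressure estimate to be the main obstacle rather than the phase/velocity bounds. The difficulty is twofold: the inf--sup argument must absorb the nonlinear convective and coupling terms, for which the only available a-priori control of $\uu_{h,\tau}$ is in $L^\infty(L^2)$, so a Ladyzhenskaya-type estimate (or, if necessary, an inverse inequality in the finite element space) is needed to reach the integrability required to test against $\|\bar\vv_h\|_{H^1}$; and one must accept and carefully track the $\tau^{-1}$ blow-up from the discrete time derivative, since no better-than-$L^2$-in-time control of $\dt\uu_{h,\tau}$ is available at this stage. This crude bound suffices for well-posedness and is later superseded by the optimal estimate \eqref{eq:con_res_2} under the coupling $\tau=c'h$.
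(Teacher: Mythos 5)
Your proposal matches the paper's proof essentially step for step: initial energy bounded via projection stability, gradient and velocity bounds from \eqref{eq:energy_disc} with $f\ge 0$, Poincar\'e plus mass conservation \eqref{eq:mass_disc} for $\|\phi_{h,\tau}\|_{L^\infty(H^1)}$, the mean of $\bar\mu_{h,\tau}$ via testing \eqref{eq:pg2} with $\bar\xi_{h,\tau}=1$ and the growth of $f'$ in (A3), and the $\tau$-dependent pressure bound from the discrete inf--sup condition \eqref{eq:infsup} applied to \eqref{eq:pg3}, with the discrete time derivative correctly identified as the source of the $\tau^{-1}$ loss. The only detail you elide is that \eqref{eq:energy_disc} controls the solution only at the nodes $t^n$, so the $L^\infty$-in-time bounds additionally use the paper's one-line remark that $\phi_{h,\tau}$ and $\uu_{h,\tau}$ are piecewise linear in time, whence norms between nodes are dominated by their nodal values by convexity.
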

\begin{proof}
From the choice of the discrete initial values and the properties of the projection operators stated in Appendix~\ref{app:proj}, we see that 
\begin{align*}
\E(\phi_{h,\tau}(0),\uu_{h,\tau}(0)) \le c
\end{align*}
with constant $c$ independent of $h$ and $\tau$. 
From \eqref{eq:energy_disc} and assumptions~(A2)--(A4), and noting that $\phi_{h,\tau}$ and $\uu_{h,\tau}$ are piecewise linear in time, we then deduce that $\|\nabla \phi_{h,\tau}\|_{L^\infty(L^2)}$, $\|\uu_{h,\tau}\|_{L^\infty(L^2)}$ and $\|\nabla \bmu_{h,\tau}\|_{L^2(L^2)}$, $\|\nabla \bar \uu_{h,\tau}\|_{L^2(L^2)}$ are uniformly bounded, independent of $h$ and $\tau$. 
This already yields the bounds for the velocity.
From \eqref{eq:mass_disc}, the estimate for the gradient, and the Poincar\'e inequality, we further deduce that also $\|\phi_{h,\tau}\|_{L^\infty(H^1)}$ is uniformly bounded.
By testing \eqref{eq:pg2} with $\bar \xi_{h,\tau}=1|_{[t^{n-1},t^n]}$, we then see that 
\begin{align*}
\tau \int_{\Omega} \bmu_{h}(t^{n-1/2}) \, dx 
&= \la \bmu_{h,\tau},1\ra^n 
 = \gamma \la \nabla \bar \phi_{h,\tau}, \nabla 1 \ra^n + \la f(\bar \phi_{h,\tau}),1\ra^n \\
&\le \la f(\bar \phi_{h,\tau}) - f(0), 1\ra^n
 \le C' \tau \| \phi_{h,\tau}\|_{L^\infty(H^1)}.
\end{align*}
Here $t^{n-1/2}-t^n=\tau/2$, and in the last step, we used the mean-value theorem, H\"older's inequality, and assumption~(A3).
Together with the bound for $\nabla \bmu_{h,\tau}$ and the Poincar\'e inequality, we obtain the remaining estimate for chemical potential. 
Further note that
\begin{align} \label{eq:infsup}
    \sup_{\vv_h \in \Vh^d} \frac{\la \div \vv_h,q_h\ra}{\|\vv_h\|_{H^1}} \ge \beta \|q_h\|_{L^2} \qquad \forall q_h \in \Qh,
\end{align}
since we use a Taylor-Hood finite element pair for velocity and pressure; see e.g. \cite{GiraultRaviart,John2016}. 
Identity~\eqref{eq:pg3} then allows to derive a corresponding bound for the discrete pressure.
\end{proof}

\subsection{Existence of solutions}

We only sketch the main argument here:
In the $n$th time step, we assume $\phi_{h,\tau}(t^{n-1})$ and $\uu_{h,\tau}(t^{n-1})$ to be given, and seek for $\dt \phi_h^n := \dt \phi_{h,\tau}(t^{n-})$, $\uu_h^n:=\uu_{h,\tau}(t^n)$, $\mu_h^n:=\bar \mu_{h,\tau}(t^{n-})$, $p_h^n:=\bar p_{h,\tau}(t^{n-})$ in the respective finite element spaces.
As usual, $f(t^-)$ denotes the limit from the left.
By some tedious but elementary manipulations, the system \eqref{eq:pg1}--\eqref{eq:pg4} can be brought into an equivalent fixed-point form 
\begin{align}
\la x_h,y_h\ra = \la T_h x_h, y_h\ra \qquad \forall y_h \in Y_h
\end{align}
with appropriate finite dimensional operator $T_h : Y_h \to Y_h$ and $x_h=(\dt \phi_h^n,\mu_h^n,\uu_h^n,p_h^n)$. 
This variational form is equivalent to $x_h = T_h x_h$. 
By testing with $y_h=x_h$ and using the same arguments as employed in the derivation of the a-priori estimates above, one can verify that any solution of $x_h = \lambda T_h x_h$ with $0 \le \lambda \le 1$ satisfies the same uniform a-priori estimates. 
Existence of a solution $x_h$ for $\lambda=1$ then follows from the Leray-Schauder principle; see \cite[Theorem~6.A]{Zeidler_IIB}.
This shows the claim for a single time step~$n$, and we can again use induction to cover all time steps. Note that no restrictions on the choice of $h,\tau >0$ are necessary for the existence, since a-priori bounds are obtained from the energy stability, which holds uniformly in the discretization parameters. \qed

\section{Proof of Theorem~\ref{thm:fulldisk}: Part~1}
\label{sec:proof:thm:fulldisk_part1}

The aim of this section is to establish the error bounds \eqref{eq:con_res_1}.
This is accomplished by splitting into projection errors and discrete errors, and then estimating both contributions by appropriate interpolation error bounds and a nonlinear discrete stability estimate. 
%
The verification of some technical details is provided in the appendix.  
%
%

\subsection{Projection errors}

For splitting the discretization error into projection and discrete components, we utilize appropriately defined discrete approximations for a sufficiently smooth periodic solution of \eqref{eq:chns1}--\eqref{eq:chns4}, obtained by
\begin{align} \label{eq:fullproj}
\hat \phi_{h,\tau} = \I_\tau^1 \pi_h^1 \phi, \qquad 
\hbmu_{h,\tau} = \bar \pi_\tau^0 \pi_h^0 \mu, \qquad 
\hat\uu_{h,\tau}=\I_\tau^1\mathbf{P}^1_h \uu, \quad  \text{ and} \quad \hbp_{h,\tau} = \bar \pi_\tau^0 \pi^0_{p,h} p.
\end{align}
Let us recall that $\I_\tau^1$ and  $\bar\pi_\tau^0$ denote the piecewise linear interpolation and the piecewise constant projection in time, while $\pi_h^0$, $\pi_h^1$, and $\mathbf{P}^1_h$ are the $L^2$-, $H^1$- and Stokes-projection in space, respectively; see Section~\ref{sec:prelim}.
As a direct consequence of well-known properties of these operators, summarized in Appendix~\ref{app:proj}, one can deduce the following bounds.
\begin{lemma}\label{lem:projerr}
Let (A5)--(A7) hold. Then
\begin{align*}
&\|\hat\phi_{h,\tau} - \phi\|^2_{L^\infty(H^1)} \leq C(\tau^4 + h^4),& 
&\|\hbmu - \bar\mu\|^2_{L^2(H^1)} \leq Ch^4\\
&\|\hat\uu_{h,\tau} - \uu\|^2_{L^\infty(L^2)} \leq C(\tau^4 + h^4),& 
&\|\hbu_{h,\tau}-\bar\uu\|^2_{L^2(H^1)} + \|\hbp_{h,\tau} - \bar p\|^2_{L^2(L^2)} \leq Ch^4, \\
&\|\dt \hat\phi_{h,\tau}-\bar \pi_\tau^0 (\dt \phi)\|^2_{L^2(H^{-1})} \leq C h^4,&
&\|\dt\hat\uu_{h,\tau}-\bar \pi_\tau^0 (\dt \uu)\|^2_{L^2(H^{-1})} \leq Ch^4.
\end{align*}
The constant $C$ in these estimates only depends on the bounds in the assumptions.
\end{lemma}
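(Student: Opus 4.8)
The plan is to estimate each of the six quantities by the triangle inequality, separating a purely spatial projection error from a purely temporal interpolation (or projection) error, and then to invoke the standard approximation and stability properties of the individual operators collected in Appendix~\ref{app:proj}. The structural fact that makes this clean is that the spatial operators $\pi_h^0,\pi_h^1,\P_h^1,\pi^0_{p,h}$ act pointwise in time and are independent of $t$, so they commute with the temporal operators $\I_\tau^1$ and $\bar\pi_\tau^0$; moreover $\I_\tau^1$ is stable in the $L^\infty$-in-time norm (nodal interpolation) and $\bar\pi_\tau^0$ is a contraction in $L^2(0,T;X)$ for every Hilbert space $X$. I would also record the exact identity $\dt\I_\tau^1 g=\bar\pi_\tau^0(\dt g)$, since the temporal derivative of a continuous piecewise linear function is the elementwise mean of $\dt g$.

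I would first dispose of the four bounds measured in an $L^2$-in-time norm, where no factor of $\tau$ appears. For the chemical potential, $\hbmu-\bmu=\bar\pi_\tau^0(\pi_h^0-I)\mu$ because both sides already carry the temporal projection $\bar\pi_\tau^0$; contractivity of $\bar\pi_\tau^0$ reduces the estimate to the spatial bound $\|(\pi_h^0-I)\mu\|_{L^2(H^1)}\le Ch^2\|\mu\|_{L^2(H^3)}$, using $\mu\in L^2(H^3)$. The velocity and pressure bounds are identical in spirit, replacing $\pi_h^0$ by $\P_h^1$ (with $\uu\in L^2(H^3)$) and by $\pi^0_{p,h}$ (with $p\in L^2(H^2)$). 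For the two time-derivative terms I would use $\dt\hat\phi_{h,\tau}-\bar\pi_\tau^0(\dt\phi)=\bar\pi_\tau^0(\pi_h^1-I)(\dt\phi)$ via the identity above and the commutation $\pi_h^1\dt=\dt\pi_h^1$, and then the negative-norm (duality) estimate $\|(\pi_h^1-I)w\|_{-1}\le Ch^2\|w\|_{H^1}$ with $w=\dt\phi\in L^2(H^1)$; the velocity case is analogous with $\P_h^1$ and $\dt\uu\in L^2(H^1)$. In every case contractivity of $\bar\pi_\tau^0$ removes the temporal operator and only the spatial $O(h^2)$ error survives, giving the stated $Ch^4$.

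The two $L^\infty$-in-time bounds are where $\tau$ enters. Writing $\hat\phi_{h,\tau}-\phi=\I_\tau^1(\pi_h^1-I)\phi+(\I_\tau^1-I)\phi$, the first term is controlled by the $L^\infty$-stability of $\I_\tau^1$ together with $\|(\pi_h^1-I)\phi\|_{L^\infty(H^1)}\le Ch^2\|\phi\|_{L^\infty(H^3)}$, where $\phi\in C(0,T;H^3)$ follows from $H^1(0,T;H^3)\hookrightarrow C(0,T;H^3)$; the second term is the pure temporal interpolation error $(\I_\tau^1-I)\phi$ in $L^\infty(H^1)$. The velocity bound is the same, now with $\P_h^1$ and its $L^2$-approximation property, using $\uu\in C(0,T;H^3)$.

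The main obstacle is precisely this temporal interpolation error in the $L^\infty$-in-time norm: to reach the claimed $O(\tau^2)$ (hence $\tau^4$ after squaring) one needs $\|(\I_\tau^1-I)\phi\|_{L^\infty(H^1)}\le C\tau^2$, which is genuinely sharper than what the elementwise estimate $\|(\I_\tau^1-I)g\|_{L^\infty(I_n;X)}\le C\tau^{3/2}\|\dtt g\|_{L^2(I_n;X)}$ delivers from $H^2(0,T;X)$ regularity alone. Care is therefore required here, and I would isolate this as a separate temporal-interpolation lemma in the appendix, exploiting the temporal regularity more fully (effectively $C^2$-in-time control of $\phi$ and $\uu$, or an equivalent refined estimate) rather than the plain Bochner $H^2$ bound; the remaining spatial ingredients are entirely standard and follow directly from Appendix~\ref{app:proj}.
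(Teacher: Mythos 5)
Your decomposition is exactly the one the paper intends: Lemma~\ref{lem:projerr} carries no written proof and is justified only as a direct consequence of the operator properties in Appendix~\ref{app:proj}, and your steps --- commuting the $t$-independent spatial projections $\pi_h^0,\pi_h^1,\P_h^1$ with $\I_\tau^1$ and $\bar\pi_\tau^0$, contractivity of $\bar\pi_\tau^0$ in $L^2(0,T;X)$ to kill the temporal part in the four $L^2$-in-time bounds, the identity $\dt \I_\tau^1 u = \bar\pi_\tau^0(\dt u)$ (which is precisely \eqref{eq:commuting}), and the negative-norm cases $s=-1$, $r=1$ of \eqref{eq:h1porjest} and \eqref{eq:stokesproj} for the two time-derivative terms --- fill in exactly what the paper leaves implicit. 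The obstacle you flag is genuine and is the one place where the paper's appeal to ``standard estimates'' is not self-contained: from $\phi,\uu\in H^2(0,T;H^1)$ alone, the paper's own interpolation estimate \eqref{eq:timinterpest} with $p=\infty$, $q=2$, $s=2$ yields only $\tau^{3/2}$ for $\|(\I_\tau^1-I)\phi\|_{L^\infty(H^1)}$ (hence $\tau^3$ after squaring, which would degrade the $L^\infty$-in-time rates in Theorem~\ref{thm:fulldisk} as well); the stated $\tau^2$ requires the choice $p=q=\infty$ in \eqref{eq:timinterpest}, i.e.\ $W^{2,\infty}$-in-time control, which the paper tacitly absorbs in the qualifier ``sufficiently regular'' in (A7). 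Your proposal to isolate this as a separate refined temporal-interpolation lemma, with the strengthened in-time regularity made explicit, is therefore the honest version of the paper's argument rather than a deviation from it; everything else in your write-up matches the intended proof.
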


In order to establish the error bounds \eqref{eq:con_res_1} of Theorem~\ref{thm:fulldisk}, it remains to apply the triangle inequality and to derive corresponding estimates for the error between the discrete solution and the projections defined in \eqref{eq:fullproj}. 
This is achieved by interpreting the latter as a solution of a perturbed discrete problem, and then using stability of the discrete problem.

\subsection{Error equation and discrete residuals}

As a first step, we show that the above projections can also be interpreted as the solutions of a perturbed discrete problem.
\begin{lemma} \label{lem:error_residual}
With the choice \eqref{eq:fullproj} of discrete approximations, we have 
\begin{align}
\la \dt\hat\phi_{h,\tau}, \bar\psi_{h,\tau}\ra^n
&= \la \phi_{h,\tau}\hbu_{h,\tau},\nabla\bar\psi_{h,\tau}\ra^n - \la b(\bar\phi_{h,\tau})\nabla\hbmu_{h,\tau},\nabla\bar\psi_{h,\tau} \ra^n + 
\la \bar r_{1,h,\tau}, \bar\psi_{h,\tau}\ra^n
,  \label{eq:disc_pert1}
\\
\la \hbmu_{h,\tau},\bar\xi_{h,\tau} \ra^n &= \gamma\la \nabla\hat\phi_{h,\tau},\nabla\bar\xi_{h,\tau} \ra^n  + \la f'(\hat\phi_{h,\tau}),\bar\xi_{h,\tau}\ra^n 
+ \la \bar r_{2,h,\tau}, \bar\xi_{h,\tau}\ra^n, \label{eq:disc_pert2}
\\
\la \dt \hat\uu_{h,\tau} , \bar\vv_h \ra^n &= \la \uu_{h,\tau} \cdot \nabla \hbu_{h,\tau},\bar\vv_{h,\tau}\ra_\skw^n -\la \eta(\bar\phi_{h,\tau}) \nabla\hbu_{h,\tau}, \nabla\bar\vv_{h,\tau} \ra^n,
 \label{eq:disc_pert3} \\
&\qquad \qquad 
-\langle \hbp_{h,\tau}, \div \bar\vv_{h,\tau}\ra^n
+ \la \phi_{h,\tau} \nabla\hbmu_{h,\tau},\bar\vv_{h,\tau}\ra^n+ \la \rr_{3,h,\tau},\bar \vv_{h,\tau}\ra^n, \notag 
\\
0 &= \la \div \hat\uu_{h,\tau}, \bar q_{h,\tau} \ra^n , \label{eq:disc_pert4}
\end{align}
for all test functions $\bar \psi_{h,\tau}, \bar \xi_{h,\tau} \in P_0(\Itau;\Vh$, $\bar\vv_{h,\tau} \in P_0(\Itau;\Vh^d)$, and $\bar q_{h,\tau} \in P_0(\Itau;\Qh)$, 
with discrete residuals $\bar r_{1,h,\tau}, \bar r_{2,h,\tau} \in P_0(\Itau;\Vh)$ and $\bar \rr_{3,h,\tau} \in P_0(\Itau;\Vh^d)$ defined by
\begin{align*}
\la \bar r_{1,h,\tau}, \bar \psi_{h,\tau}\ra^n
&= \la \dt (\pi_h^1 \phi - \phi), \bar \psi_{h,\tau}\ra^n + \la b(\bar\phi_{h,\tau}) \nabla \hbmu_{h,\tau} - b(\phi) \nabla \mu, \nabla \bar \psi_{h,\tau}\ra^n  
\\
& \qquad \qquad - \la (\phi_{h,\tau}\hbu_{h,\tau} - \phi\uu,\nabla\bar\psi_{h,\tau} \ra^n,
\\
\la  \bar r_{2,h,\tau}, \bar \xi_{h,\tau} \ra^n
&= \la \hbmu_{h,\tau} - I_\tau^1 \mu, \bar \xi_{h,\tau}\ra^n 
      + \gamma \la \nabla (\hat \phi_{h,\tau} - \I_\tau^1 \phi), \nabla \bar \xi_{h,\tau}\ra^n 
\\
& \qquad \qquad  + \la f'(\hat \phi_{h,\tau}) - \I_\tau^1 f'(\phi), \bar \xi_{h,\tau}\ra^n,
\\
\la \bar \rr_{3,h,\tau},\bar \vv_{h,\tau} \ra^n 
&= \la \dt\mathbf{P}^1_h\uu-\dt\uu,\bar\vv_{h,\tau} \ra^n + \la \eta(\bar\phi_{h,\tau})\nabla\hbu_{h,\tau} - \eta(\phi)\nabla\uu,\nabla\bar\vv_{h,\tau} \ra^n 
\\
& \qquad \qquad - \la \hbp_{h,\tau}-p,\div(\bar\vv_{h,\tau}) \ra^n + 
\la \uu_{h,\tau} \cdot \nabla \hbu_{h,\tau} - \uu \cdot \nabla \uu,\bar\vv_{h,\tau}\ra_\skw^n
\\
& \qquad \qquad 
+ \la \phi_{h,\tau} \nabla\hbmu_{h,\tau} - \phi \nabla \mu, \bar\vv_{h,\tau}\ra^n.
\end{align*}
\end{lemma}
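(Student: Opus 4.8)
The plan is to read the three residuals as the \emph{defects} produced when the projections \eqref{eq:fullproj} are substituted into the discrete scheme \eqref{eq:pg1}--\eqref{eq:pg4}: for each of \eqref{eq:disc_pert1}--\eqref{eq:disc_pert3} I would define $\la\bar r_{i,h,\tau},\cdot\,\ra^n$ to be the difference between the left-hand side of that relation and the discrete spatial form on its right-hand side (without the residual term). With this reading the perturbed equations hold by construction, and the whole content of the lemma is to show that these defects coincide with the explicit interpolation-and-projection-error expressions stated. To this end I would, in each defect, introduce the exact solution $(\phi,\mu,\uu,p)$ by adding and subtracting, and then invoke the continuous variational identities \eqref{eq:weak1}--\eqref{eq:weak4}, integrated over $(t^{n-1},t^n)$ against the (time-constant) test functions, to cancel the leading contributions.

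The key observation driving the rewriting is that the temporal interpolant $\I_\tau^1$ enters in two distinct ways. For the relations carrying a time derivative, namely \eqref{eq:disc_pert1} and \eqref{eq:disc_pert3}, I would use that $\I_\tau^1 g$ is nodally exact, so that $\int_{t^{n-1}}^{t^n}\dt(\I_\tau^1 g-g)\,ds=0$ and hence $\la\dt\hat\phi_{h,\tau},\bar\psi_{h,\tau}\ra^n=\la\dt\pi_h^1\phi,\bar\psi_{h,\tau}\ra^n$ and $\la\dt\hat\uu_{h,\tau},\bar\vv_{h,\tau}\ra^n=\la\dt\P_h^1\uu,\bar\vv_{h,\tau}\ra^n$ against any function piecewise constant in time. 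Adding and subtracting $\dt\phi$ and $\dt\uu$, and then substituting \eqref{eq:weak1} and \eqref{eq:weak3} to eliminate the exact time derivatives, reproduces exactly the residual terms $\la\dt(\pi_h^1\phi-\phi),\cdot\,\ra^n$ and $\la\dt(\P_h^1\uu-\uu),\cdot\,\ra^n$ together with the remaining difference contributions. For the purely elliptic relation \eqref{eq:disc_pert2}, which has no time derivative, I would instead evaluate \eqref{eq:weak2} at the two nodes $t^{n-1},t^n$ and interpolate in time; since $\I_\tau^1$ commutes with the linear spatial operators occurring there, one obtains $\la\I_\tau^1\mu,\bar\xi_{h,\tau}\ra^n=\gamma\la\nabla\I_\tau^1\phi,\nabla\bar\xi_{h,\tau}\ra^n+\la\I_\tau^1 f'(\phi),\bar\xi_{h,\tau}\ra^n$, which is precisely what is needed to collapse the defect into the three projection-error differences listed for $\bar r_{2,h,\tau}$.

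The constraint \eqref{eq:disc_pert4} carries no residual and is the easiest step: because $\P_h^1\uu\in\Zh$ and $\Zh$ is a linear subspace, the interpolant $\hat\uu_{h,\tau}=\I_\tau^1\P_h^1\uu$ lies in $\Zh$ at every instant of time, so $\la\div\hat\uu_{h,\tau},\bar q_{h,\tau}\ra^n=0$ follows directly from the definition \eqref{eq:Zh}. The remaining work is bookkeeping, and this is where I expect the main care to be required: one must keep precise track of which factors in the nonlinear and convective contributions are the genuine discrete unknowns $\phi_{h,\tau},\uu_{h,\tau}$ and which are projections, since the transport term $\phi_{h,\tau}\hbu_{h,\tau}$, the convective term $\uu_{h,\tau}\cdot\nabla\hbu_{h,\tau}$, and the capillary term $\phi_{h,\tau}\nabla\hbmu_{h,\tau}$ are \emph{not} eliminated by the continuous identities and instead pass into the residuals as the discrepancies $\phi_{h,\tau}\hbu_{h,\tau}-\phi\uu$, $\uu_{h,\tau}\cdot\nabla\hbu_{h,\tau}-\uu\cdot\nabla\uu$, and $\phi_{h,\tau}\nabla\hbmu_{h,\tau}-\phi\nabla\mu$. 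This separation of discrete factors from projected factors, rather than any single hard estimate, is the crux; the subsequent quantitative bounding of the residuals is deferred to the later error analysis and relies on the interpolation estimates of Lemma~\ref{lem:projerr}.
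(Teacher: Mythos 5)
Your proposal is correct and is essentially the paper's own argument: the paper's proof consists of the single remark that the identities follow by plugging the projections \eqref{eq:fullproj} into the discrete system \eqref{eq:pg1}--\eqref{eq:pg4} and using elementary properties of the projections together with the variational characterization \eqref{eq:weak1}--\eqref{eq:weak4}, and your write-up supplies exactly the details left implicit there (nodal exactness of $\I_\tau^1$ so that the time-derivative terms reduce to $\dt\pi_h^1\phi$ and $\dt\P_h^1\uu$ against time-constant test functions, interpolation of the $\mu$-equation at the nodes $t^{n-1},t^n$, the pointwise-in-time inclusion $\hat\uu_{h,\tau}\in\Zh$ for \eqref{eq:disc_pert4}, and the careful separation of genuine discrete factors from projected ones in the nonlinear terms). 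One cosmetic caveat: carrying out your cancellation literally produces the stated residual expressions only up to the relative signs of some terms (e.g.\ in $\bar r_{2,h,\tau}$), but this is immaterial since only the norms of the residuals enter the subsequent analysis.
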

\begin{proof}
The result follows immediately by plugging the projections into the discrete variational problem \eqref{eq:pg1}--\eqref{eq:pg4} and using some elementary properties of the projections as well as the variational characterization \eqref{eq:weak1}--\eqref{eq:weak4} of the true solution. 
\end{proof}

\subsection{Discrete stability and residual estimates}

In order to measure the discrete error, i.e., the difference between the discrete solution and the projection of the continuous solution, we use a regularized relative energy functional, which is defined by 
\begin{align} \label{eq:Ealpha}
\E_\alpha(\phi,\uu|\hat \phi,\hat \uu)
&:= \int_\Omega \frac{\gamma}{2} |\nabla \phi - \nabla \hat \phi|^2 + f(\phi|\hat \phi) + \frac{\alpha}{2} |\phi - \hat \phi|^2 + \frac{1}{2} |\uu - \hat \uu|^2 \, dx,
\end{align}
where $f(\phi|\hat \phi) := f(\phi) - f(\hat \phi) - f'(\hat \phi) (\phi - \hat \phi)$ is the second order Taylor remainder of $f$. The parameter $\alpha$ is chosen by $\alpha = \max\{\gamma,\gamma+f_1\}$, 
such that $f(\phi|\hat \phi) + \frac{\alpha}{2}|\phi - \hat \phi|^2$ becomes strongly convex. 
We will make frequent use of the following auxiliary result. 
\begin{lemma} \label{lem:equiv}
Let (A0)--(A4) hold. Then
\begin{align} \label{eq:lower_bound_rel}
c_0 \, (\|\phi - \hat \phi\|_{H^1}^2 + \|\uu - \hat \uu\|_{L^2}^2)  \le \E_\alpha(\phi,\uu|\hat \phi,\uu) \le C_0' \, (\|\phi - \hat \phi\|_{H^1}^2 + \|\uu - \hat \uu\|_{L^2}^2)
\end{align}
for all $\phi,\hat \phi \in H^1(\Omega)$ and all $\uu, \hat \uu \in L^2(\Omega)$ with $C_0'=C_0 (1+\|\phi\|_{H^1}^2+\|\hat \phi\|_{H^1}^2)$ and generic constants constants $c_0,C_0$ depending only on the bounds in the assumptions.
\end{lemma}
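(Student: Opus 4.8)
The plan is to establish the two-sided equivalence in \eqref{eq:lower_bound_rel} by analyzing each of the four terms in the definition \eqref{eq:Ealpha} of $\E_\alpha$ separately, since the velocity part decouples cleanly from the phase-field part. The velocity contribution $\frac{1}{2}\|\uu - \hat\uu\|_{L^2}^2$ already matches the $\|\uu - \hat\uu\|_{L^2}^2$ term on both sides with constants $\frac12$, so the whole problem reduces to controlling the phase-field part
\begin{align*}
R(\phi,\hat\phi) := \int_\Omega \frac{\gamma}{2}|\nabla\phi - \nabla\hat\phi|^2 + f(\phi|\hat\phi) + \frac{\alpha}{2}|\phi - \hat\phi|^2\, dx
\end{align*}
from above and below by $\|\phi - \hat\phi\|_{H^1}^2$. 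The gradient term contributes exactly $\frac{\gamma}{2}\|\nabla(\phi-\hat\phi)\|_{L^2}^2$, so the real work is to show that $f(\phi|\hat\phi) + \frac{\alpha}{2}|\phi-\hat\phi|^2$ behaves, after integration, like $\|\phi-\hat\phi\|_{L^2}^2$ up to constants, and then to combine the $L^2$ and gradient pieces into the full $H^1$-norm.

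\emph{For the lower bound}, I would use the integral form of the Taylor remainder,
\begin{align*}
f(\phi|\hat\phi) = \int_0^1 (1-s)\, f''(\hat\phi + s(\phi - \hat\phi))\,ds \cdot |\phi - \hat\phi|^2,
\end{align*}
together with assumption (A3), which gives $f''(\cdot) \geq -f_1$ pointwise. This yields $f(\phi|\hat\phi) \geq -\frac{f_1}{2}|\phi - \hat\phi|^2$, and therefore
\begin{align*}
f(\phi|\hat\phi) + \frac{\alpha}{2}|\phi - \hat\phi|^2 \geq \frac{\alpha - f_1}{2}|\phi - \hat\phi|^2 \geq \frac{\gamma}{2}|\phi - \hat\phi|^2,
\end{align*}
since $\alpha = \max\{\gamma, \gamma + f_1\} = \gamma + f_1$ when $f_1 \geq 0$. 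Combining with the gradient term then produces $R(\phi,\hat\phi) \geq \frac{\gamma}{2}\|\phi - \hat\phi\|_{H^1}^2$, and adding the velocity term with $c_0 = \frac{1}{2}\min\{\gamma,1\}$ gives the left inequality. This direction I expect to be routine.

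\emph{For the upper bound}, the obstacle is that $f''$ is not bounded above — under (A3) it grows like $|s|^2$, so $f(\phi|\hat\phi)$ cannot be controlled by $\|\phi - \hat\phi\|_{L^2}^2$ with a constant depending only on the parameters. This is precisely why the factor $C_0' = C_0(1 + \|\phi\|_{H^1}^2 + \|\hat\phi\|_{H^1}^2)$ appears on the right. Using the growth bound $|f''(s)| \leq f_2^{(2)} + f_3^{(2)}|s|^2$ from (A3), the Taylor integral gives pointwise $|f(\phi|\hat\phi)| \lesssim (1 + |\phi|^2 + |\hat\phi|^2)|\phi - \hat\phi|^2$; integrating and applying H\"older's inequality with exponents splitting $L^2 \cdot L^4 \cdot L^4$ converts the cubic/quartic factors into $(1 + \|\phi\|_{L^4}^2 + \|\hat\phi\|_{L^4}^2)\|\phi - \hat\phi\|_{L^4}^2$. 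The remaining step is to absorb the $L^4$-norms using the Sobolev (Gagliardo--Nirenberg) embedding $H^1(\Omega) \hookrightarrow L^4(\Omega)$, valid for $d \leq 3$ by (A0), which replaces $\|\cdot\|_{L^4}$ by $\|\cdot\|_{H^1}$; this accounts for both the $\|\phi - \hat\phi\|_{H^1}^2$ factor and the $(1 + \|\phi\|_{H^1}^2 + \|\hat\phi\|_{H^1}^2)$ prefactor. Together with the elementary bounds on the gradient and $\alpha$-terms and the velocity term, this yields the right inequality with the stated $C_0'$. \textbf{The main difficulty} is thus the careful bookkeeping of the superlinear growth of $f$ via the Sobolev embedding, which is what forces the solution-dependent constant and what distinguishes this lemma from the trivial quadratic-potential case.
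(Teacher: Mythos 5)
Your proposal is correct and follows essentially the same route as the paper's proof: a pointwise two-sided bound on the Taylor remainder $f(\phi|\hat\phi)$ using $f''\ge -f_1$, the choice of $\alpha$, and the quadratic growth of $f''$ from (A3), followed by integration over $\Omega$ with H\"older and the embedding $H^1(\Omega)\hookrightarrow L^4(\Omega)$ for $d\le 3$. The paper compresses the integration step into ``elementary arguments,'' whereas you spell out the $L^2\cdot L^4\cdot L^4$ H\"older splitting and Sobolev absorption explicitly, which is exactly what is needed and where the solution-dependent constant $C_0'$ originates.
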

\begin{proof}
From assumption (A3) and the choice of $\alpha$, we can see that 
\begin{align*}
    -\frac{\alpha}{4} |\phi - \hat \phi|^2 \le f(\phi|\hat \phi) \le C_f (1 + |\phi|^2 + |\hat \phi|^2)|\phi - \hat \phi|^2,  
\end{align*}
with constant $C_f$ independent of $\phi$, $\hat \phi$. The assertion of the lemma then follows by integration over $\Omega$ and elementary arguments. 
\end{proof}

The following result is the key ingredient for the estimation of the discrete errors.

\begin{lemma}[Relative energy estimate] \label{lem:fullstab} 
%
Let (A0)--(A6) hold.
Then 
\begin{align*}
 \E_\alpha(\phi_{h,\tau},&\uu_{h,\tau}|\hat \phi_{h,\tau},\hat\uu_{h,\tau}) \, \Big|_{t^{n-1}}^{t^n} + \frac{3}{4} \int_{t^{n-1}}^{t^n} \D_{\bar \phi_{h,\tau}}(\bmu_{h,\tau}-\hbmu_{h,\tau},  \bu_{h,\tau}-\hbu_{h,\tau}) \, ds 
 \\
 & \leq \bar c \int_{t^{n-1}}^{t^n} \E_\alpha(\phi_{h,\tau},\uu_{h,\tau}|\hat \phi_{h,\tau},\hat\uu_{h,\tau})\, ds \\ 
 & \qquad \qquad \qquad + \bar C \int_{t^{n-1}}^{t^{n}} \|\bar r_{1,h,\tau}\|_{H^{-1}}^2 + \|\bar r_{2,h,\tau}\|_{H^{1}}^2 + \|\bar \rr_{3,h,\tau}\|_{\Zh^*}^2 \, ds,
\end{align*}
with constants $\bar c$, $\bar C$ independent of $h$ and $\tau$, 
and the dual norm defined by
\begin{align*}
    \|\rr\|_{\Zh^*}:=\sup_{\vv_h \in \Zh} \frac{\la \rr, \vv_h\ra}{\|\vv_h\|_{H^1}},
\end{align*}
with $\Zh=\{\vv_h \in \Vh^d : \la \div \vv_h, q_h\ra=0 \ \forall q_h \in \Qh\}$ the discrete divergence free velocities.
\end{lemma}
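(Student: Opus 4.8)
The plan is to run a fully discrete relative energy argument: I regard the projected quantities of Lemma~\ref{lem:error_residual} as a perturbed discrete solution and measure their deviation from the genuine solution of Problem~\ref{prob:full} in the coercive functional $\E_\alpha$. Abbreviate the errors $e_\phi := \phi_{h,\tau}-\hat\phi_{h,\tau}$, $e_\mu := \bmu_{h,\tau}-\hbmu_{h,\tau}$ and $\mathbf e := \bu_{h,\tau}-\hbu_{h,\tau}$, and recall that $e_\mu$, $\mathbf e$ and $\dt e_\phi$ are piecewise constant in time, so they are admissible test functions. Two structural facts are used repeatedly: subtracting \eqref{eq:pg1}--\eqref{eq:pg4} from \eqref{eq:disc_pert1}--\eqref{eq:disc_pert4} gives error equations that are \emph{linear} in $(e_\mu,\mathbf e)$, since $b$ and $\eta$ are evaluated at the same argument $\bar\phi_{h,\tau}$ in both problems and all coefficient mismatches sit in the residuals; and $\mathbf e\in\Zh$ by \eqref{eq:pg4} and \eqref{eq:disc_pert4}, so the discrete pressure error drops out of the momentum balance.

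First I would differentiate $\E_\alpha(\phi_{h,\tau},\uu_{h,\tau}|\hat\phi_{h,\tau},\hat\uu_{h,\tau})$ in time and integrate over $(t^{n-1},t^n)$. Since $\dt e_\phi$ and $\dt(\uu_{h,\tau}-\hat\uu_{h,\tau})$ are piecewise constant in time, the increments of the quadratic contributions coincide with the pairings $\gamma\la\nabla e_\phi,\nabla\dt e_\phi\ra^n$, $\alpha\la e_\phi,\dt e_\phi\ra^n$ and $\la\mathbf e,\dt(\uu_{h,\tau}-\hat\uu_{h,\tau})\ra^n$, while the $f$-part of the energy yields $\la f'(\phi_{h,\tau})-f'(\hat\phi_{h,\tau}),\dt\phi_{h,\tau}\ra^n-\la f''(\hat\phi_{h,\tau})\dt\hat\phi_{h,\tau},e_\phi\ra^n$. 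Into this I insert the chemical-potential error equation tested with $\dt e_\phi$, the phase equation tested with $e_\mu$, and the momentum equation tested with $\mathbf e$. The coupling terms $\pm\la\phi_{h,\tau}\mathbf e,\nabla e_\mu\ra^n$ then cancel, the convective term vanishes by skew-symmetry $\la\uu_{h,\tau}\cdot\nabla\mathbf e,\mathbf e\ra_\skw^n=0$, and the $f'$-terms recombine into a pure second-order Taylor remainder, giving
\begin{align*}
\E_\alpha\big|_{t^{n-1}}^{t^n}&+\int_{t^{n-1}}^{t^n}\D_{\bar\phi_{h,\tau}}(e_\mu,\mathbf e)\,ds\\
&= T_f+\alpha\la e_\phi,\dt e_\phi\ra^n-\la\bar r_{1,h,\tau},e_\mu\ra^n+\la\bar r_{2,h,\tau},\dt e_\phi\ra^n-\la\bar\rr_{3,h,\tau},\mathbf e\ra^n,
\end{align*}
with $T_f:=\la f'(\phi_{h,\tau})-f'(\hat\phi_{h,\tau})-f''(\hat\phi_{h,\tau})e_\phi,\dt\hat\phi_{h,\tau}\ra^n$.

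The remaining work is to absorb the right-hand side. For $T_f$ the growth bound (A3) on $f'''$ gives $|f'(\phi_{h,\tau})-f'(\hat\phi_{h,\tau})-f''(\hat\phi_{h,\tau})e_\phi|\le C(1+|\phi_{h,\tau}|+|\hat\phi_{h,\tau}|)\,|e_\phi|^2$, and Hölder together with $H^1\hookrightarrow L^6$ (valid for $d\le3$), the a-priori bounds of Lemma~\ref{lem:apriori}, and boundedness of $\dt\hat\phi_{h,\tau}$ in $L^\infty(L^2)$ (from (A7) and stability of $\I_\tau^1\pi_h^1$) yield $|T_f|\le\bar c\int_{t^{n-1}}^{t^n}\E_\alpha\,ds$ via Lemma~\ref{lem:equiv}. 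The delicate term is $\alpha\la e_\phi,\dt e_\phi\ra^n$: estimating it naively as the increment $\tfrac\alpha2\|e_\phi\|^2\big|_{t^{n-1}}^{t^n}$ would merely reproduce the non-coercive unregularised energy, so instead I rewrite it as $\alpha\la\dt e_\phi,\bar e_\phi\ra^n$ with $\bar e_\phi:=\bar\pi_\tau^0 e_\phi$ and feed it back into the phase equation, obtaining $\alpha\big(\la\phi_{h,\tau}\mathbf e,\nabla\bar e_\phi\ra^n-\la b(\bar\phi_{h,\tau})\nabla e_\mu,\nabla\bar e_\phi\ra^n-\la\bar r_{1,h,\tau},\bar e_\phi\ra^n\big)$. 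Each summand is split by Young's inequality into a small multiple of $\int\D_{\bar\phi_{h,\tau}}(e_\mu,\mathbf e)$, a term $\bar c\int\E_\alpha$, and a residual term, using $\int_{t^{n-1}}^{t^n}\|\nabla\bar e_\phi\|_{L^2}^2\le\int_{t^{n-1}}^{t^n}\|\nabla e_\phi\|_{L^2}^2\le C\int_{t^{n-1}}^{t^n}\E_\alpha$. The residual pairings are handled by duality: $\la\bar r_{1,h,\tau},e_\mu\ra^n$ needs $\|e_\mu\|_{H^1}$, recovered from $\|\nabla e_\mu\|$ plus the mean bound $|\int_\Omega e_\mu|\lesssim\E_\alpha^{1/2}+\|\bar r_{2,h,\tau}\|_{L^2}$ obtained by testing the chemical-potential error equation with the constant $1$; $\la\bar\rr_{3,h,\tau},\mathbf e\ra^n\le\bar C\|\bar\rr_{3,h,\tau}\|_{\Zh^*}^2+\epsilon\|\mathbf e\|_{H^1}^2$ is absorbed by the viscous dissipation after Poincaré; and $\la\bar r_{2,h,\tau},\dt e_\phi\ra^n$ is bounded through $\|\dt e_\phi\|_{H^{-1}}$, which the phase equation and $H^1$-stability of $\pi_h^0$ control by $\|\nabla e_\mu\|+\|\mathbf e\|_{L^2}+\|\bar r_{1,h,\tau}\|_{H^{-1}}$. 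Choosing $\epsilon$ small keeps at least $\tfrac34\int\D_{\bar\phi_{h,\tau}}(e_\mu,\mathbf e)$ on the left and produces the asserted constants.

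The main obstacle is precisely the coercivity-preserving treatment of $\alpha\la e_\phi,\dt e_\phi\ra^n$ in tandem with $T_f$: both must be estimated using only the weak regularity (A7), so every Sobolev exponent has to close in three space dimensions, and the dissipation budget $\tfrac34\int\D_{\bar\phi_{h,\tau}}(e_\mu,\mathbf e)$ must suffice to swallow each gradient term generated along the way. The bookkeeping of which contribution lands in $\int\E_\alpha$, in the dissipation, or in a residual norm is where the care lies; the algebraic cancellations (coupling terms, skew-symmetric convection, discrete pressure) are what make it close.
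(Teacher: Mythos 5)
Your proposal is correct and follows essentially the same route as the paper's Appendix~B proof: the Taylor-remainder derivative identity for $\E_\alpha$, testing the three error equations with $\dt(\phi_{h,\tau}-\hat\phi_{h,\tau})$, $\bmu_{h,\tau}-\hbmu_{h,\tau}$ and $\bu_{h,\tau}-\hbu_{h,\tau}$, exact cancellation of the coupling terms, skew-symmetry and discrete divergence-freeness killing convection and pressure, recovery of the mean of the $\mu$-error by testing with the constant, and crucially the same trick of feeding $\alpha\,\bar\pi_\tau^0(\phi_{h,\tau}-\hat\phi_{h,\tau})$ back through the phase equation; your only deviation, bounding $\la \bar r_{2,h,\tau},\dt(\phi_{h,\tau}-\hat\phi_{h,\tau})\ra^n$ by duality through $\|\dt(\phi_{h,\tau}-\hat\phi_{h,\tau})\|_{H^{-1}}$ rather than inserting $\bar r_{2,h,\tau}$ directly into the test function as the paper does, is equivalent in substance and produces the same $\|\bar r_{2,h,\tau}\|_{H^1}$ norm. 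One small slip: in that $H^{-1}$ bound the transport term yields $\|\phi_{h,\tau}\|_{L^6}\|\bu_{h,\tau}-\hbu_{h,\tau}\|_{L^3}$ rather than $\|\bu_{h,\tau}-\hbu_{h,\tau}\|_{L^2}$ (since $\phi_{h,\tau}$ is bounded only in $L^\infty(H^1)$, not $L^\infty(L^\infty)$), but this $H^1$-type contribution is absorbed by the velocity dissipation exactly as in your own budget, so the argument still closes.
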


A detailed proof of this estimate will be given in Appendix~\ref{app:fullstab}. We only note at this point that the use of the relative energy and dissipation functionals is tailored to the nonlinearity of the problem and greatly simplifies the analysis. 
The second main ingredient for the estimation of the discrete error are the following bounds for the residuals.
\begin{lemma}\label{lem:residual}
Let (A0)--(A7) hold. Then 
\begin{align*}
&\int_{t^{n-1}}^{t^n} \|\bar r_{1,h,\tau}\|^2_{H^{-1}}
+ \|\bar r_{2,h,\tau}\|^2_{H^{1}} + \|\bar \rr_{3,h,\tau}\|^2_{\Zh^*} \, ds \\ 
& \qquad \qquad \le 
\hat c \int_{t^{n-1}}^{t^n} \E_\alpha(\phi_{h,\tau},\uu_{h,\tau}|\hat \phi_{h,\tau},\hat\uu_{h,\tau}) \, ds \\
& \qquad \qquad \qquad \qquad + \frac{1}{4 \bar C} \int_{t^{n-1}}^{t^n} \D_{\bar \phi_{h,\tau}}(\bmu_{h,\tau}-\hbmu_{h,\tau},  \bu_{h,\tau}-\hbu_{h,\tau}) \, ds + \hat C (h^4 + \tau^4),
\end{align*}
with constant $\bar C$ from Lemma~\ref{lem:fullstab}, and constants $\hat c$, $\hat C$ again independent of $h$ and $\tau$. 
\end{lemma}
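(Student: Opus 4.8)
The plan is to bound the three residuals separately, in each case splitting the defining functional into \emph{consistency} contributions, which involve only projections and interpolants of the exact solution, and \emph{discrete-coupling} contributions, which additionally involve $\phi_{h,\tau}$, $\bar\phi_{h,\tau}$, $\uu_{h,\tau}$. The consistency parts will be bounded by $C(h^4+\tau^4)$ by means of Lemma~\ref{lem:projerr}, while the discrete-coupling parts will be reduced to $\|\phi_{h,\tau}-\hat\phi_{h,\tau}\|_{H^1}$ and $\|\uu_{h,\tau}-\hat\uu_{h,\tau}\|_{L^2}$ and controlled through $\int_{t^{n-1}}^{t^n}\E_\alpha$ via Lemma~\ref{lem:equiv}. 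A device used throughout is that the test functions are piecewise constant in time, so only the \emph{interval averages} of the integrands enter; this lets us replace $\I_\tau^1$-quantities by their averages and recover the second order $O(\tau^2)$ in time, since the interval average of $u$ and of $\I_\tau^1 u$ agree up to $O(\tau^2)$ for smooth $u$ (trapezoidal versus midpoint quadrature, both second-order accurate at the interval midpoint).

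I would first treat $\bar r_{2,h,\tau}$, which contains no discrete quantities and is thus a pure consistency term. Using the defining orthogonality \eqref{eq:defh1proj} of $\pi_h^1$ interpolated in time, the term $\gamma\la\nabla(\hat\phi_{h,\tau}-\I_\tau^1\phi),\nabla\bar\xi_{h,\tau}\ra^n$ can be rewritten as the $L^2$-pairing $-\gamma\la\hat\phi_{h,\tau}-\I_\tau^1\phi,\bar\xi_{h,\tau}\ra^n$, so that $\bar r_{2,h,\tau}$ becomes the $L^2$-projection onto $\Vh$ of an explicit function $g\in H^1(\Omega)$ built from $\hbmu_{h,\tau}-\I_\tau^1\mu$, $\hat\phi_{h,\tau}-\I_\tau^1\phi$, and $f'(\hat\phi_{h,\tau})-\I_\tau^1 f'(\phi)$. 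Invoking the $H^1$-stability of the $L^2$-projection on the quasi-uniform mesh (A5), I get $\|\bar r_{2,h,\tau}\|_{H^1}\le C\|g\|_{H^1}$, and every contribution to $\|g\|_{H^1}$ is $O(h^2+\tau^2)$ by Lemma~\ref{lem:projerr} together with the midpoint/trapezoidal cancellation of the time-interpolation errors noted above. Squaring and integrating yields $\int_{t^{n-1}}^{t^n}\|\bar r_{2,h,\tau}\|_{H^1}^2\le\hat C(h^4+\tau^4)$, so $\bar r_{2,h,\tau}$ feeds only the last term of the claim.

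Next I would treat $\bar r_{1,h,\tau}$ and $\bar\rr_{3,h,\tau}$. The time-derivative terms reduce, after averaging, to $\dt\hat\phi_{h,\tau}-\bar\pi_\tau^0\dt\phi$ and $\dt\hat\uu_{h,\tau}-\bar\pi_\tau^0\dt\uu$, whose $H^{-1}$-bounds are exactly the last line of Lemma~\ref{lem:projerr}. For the coefficient terms I write, schematically, $b(\bar\phi_{h,\tau})\nabla\hbmu_{h,\tau}-b(\phi)\nabla\mu=[b(\bar\phi_{h,\tau})-b(\hbphi_{h,\tau})]\nabla\hbmu_{h,\tau}+\{\text{consistency}\}$, and likewise for $\eta$; the Lipschitz bounds (A2), (A4) turn the first bracket into $b_3|\bar\phi_{h,\tau}-\hbphi_{h,\tau}|=b_3|\bar\pi_\tau^0(\phi_{h,\tau}-\hat\phi_{h,\tau})|$, which is placed in $L^6$ via $H^1\hookrightarrow L^6$ against the projected gradients $\nabla\hbmu_{h,\tau}$, $\nabla\hbu_{h,\tau}\in L^\infty(L^3)$ (bounded through (A7) and the stability of the projections), hence estimated by $\int\E_\alpha$. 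The transport terms $\phi_{h,\tau}\hbu_{h,\tau}-\phi\uu$ and $\phi_{h,\tau}\nabla\hbmu_{h,\tau}-\phi\nabla\mu$ are split analogously, extracting $\phi_{h,\tau}-\hat\phi_{h,\tau}$ (controlled in $H^1$ by $\E_\alpha$) against the smooth projected factors, with Hölder in space and time. For the pressure consistency term $\la\hbp_{h,\tau}-p,\div\bar\vv_{h,\tau}\ra^n$ I use that $\bar\vv_{h,\tau}\in\Zh$, so that the $\Qh$-part $\tilde\pi_h^0 p$ drops out by \eqref{eq:Zh} and only $\la\tilde\pi_h^0 p-p,\div\bar\vv_{h,\tau}\ra^n=O(h^2)\|\bar\vv_{h,\tau}\|_{H^1}$ remains; this is precisely why $\bar\rr_{3,h,\tau}$ is measured in $\Zh^*$. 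All remaining pieces are consistency terms bounded by $C(h^4+\tau^4)$ through Lemma~\ref{lem:projerr}.

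The main obstacle is the convective residual $\la\uu_{h,\tau}\cdot\nabla\hbu_{h,\tau}-\uu\cdot\nabla\uu,\bar\vv_{h,\tau}\ra_\skw^n$ in dimension $d=3$. Its discrete part $\la(\uu_{h,\tau}-\uu)\cdot\nabla\hbu_{h,\tau},\bar\vv_{h,\tau}\ra_\skw$ involves the velocity error, for which $\E_\alpha$ supplies only the $L^2$-norm, whereas in $d=3$ a trilinear form of $H^1$-functions cannot be closed with $L^2$ alone. Here I would use the Gagliardo--Nirenberg estimate $\|\uu_{h,\tau}-\hat\uu_{h,\tau}\|_{L^3}\le C\|\uu_{h,\tau}-\hat\uu_{h,\tau}\|_{L^2}^{1/2}\|\uu_{h,\tau}-\hat\uu_{h,\tau}\|_{H^1}^{1/2}$ together with $\|\bar\vv_{h,\tau}\|_{L^6}\le C\|\bar\vv_{h,\tau}\|_{H^1}$, and then Young's inequality to separate a factor $\hat c\int\E_\alpha$ from a small multiple of the $H^1$-seminorm of the velocity error. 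The latter is furnished by the dissipation $\D_{\bar\phi_{h,\tau}}(\cdot,\bar\uu_{h,\tau}-\hbu_{h,\tau})$ through the lower viscosity bound $\eta_1$ in (A4), after passing from the piecewise-linear-in-time error to its piecewise-constant average (equivalent up to the interval oscillation, which is itself of higher order). Choosing the Young parameter so that this contribution equals $\tfrac{1}{4\bar C}\int\D$, with $\bar C$ the constant of Lemma~\ref{lem:fullstab}, produces exactly the dissipation term in the statement. Collecting the consistency bounds $O(h^4+\tau^4)$, the relative-energy bounds $\hat c\int\E_\alpha$, and this single dissipation contribution, and summing the three residual estimates, completes the proof.
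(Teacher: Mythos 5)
Your proposal is correct and follows essentially the same route as the paper's proof in Appendix~\ref{app:residual}: consistency/discrete splitting with Lemma~\ref{lem:projerr} for the projection parts, the orthogonality identity \eqref{eq:aux2} to obtain a pointwise representation of $\bar r_{2,h,\tau}$ combined with $H^1$-stability of $\pi_h^0$, relative-energy control of the $\phi$- and $\uu$-error factors via Lemma~\ref{lem:equiv}, and absorption of the $H^1$-part of the velocity error in the convective terms into the dissipation by Young's inequality (the paper uses H\"older with $L^6$--$L^3$ splittings where you use Gagliardo--Nirenberg, an immaterial variation). Your informal ``midpoint versus trapezoidal'' argument for the $O(\tau^2)$ cancellation in averaged products and compositions is exactly the content that the paper makes rigorous in Lemma~\ref{lem:super}, Lemma~\ref{lem:average_time_err} and estimate \eqref{eq:midpoint_order_single}, so no genuine gap remains.
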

The proof of this assertion is rather technical but essentially follows from interpolation and projection error estimates, and will be presented in detail in Appendix~\ref{app:residual}.

We will further make use of the following basic estimate, which follows readily from convexity of the energy functional and piecewise linearity of the discrete functions in time.
\begin{lemma}\label{lem:equivnorms}
    Let (A1)--(A6) hold. Then
    \begin{align} \label{eq:equivnorms}
      \int_{t^{n-1}}^{t^n} &\E_\alpha(\phi_{h,\tau},\uu_{h,\tau}|\hat \phi_{h,\tau},\hat\uu_{h,\tau})\, ds  \\
      &\leq \tau\left(\E_\alpha(\phi_{h,\tau},\uu_{h,\tau}|\hat \phi_{h,\tau},\hat\uu_{h,\tau})(t^{n-1}) + \E_\alpha(\phi_{h,\tau},\uu_{h,\tau}|\hat \phi_{h,\tau},\hat\uu_{h,\tau})(t^n) \right). \notag
    \end{align}
\end{lemma}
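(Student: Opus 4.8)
The plan is to reduce the assertion, using the affine time-dependence of all quantities involved, to a one-dimensional convexity statement on the reference interval, and then to invoke convexity of the regularized energy. First I would rescale: writing $t=t^{n-1}+s\tau$ with $s\in[0,1]$ and $dt=\tau\,ds$, the left-hand side becomes $\tau\int_0^1 G(s)\,ds$ with $G(s):=\E_\alpha(\phi_{h,\tau},\uu_{h,\tau}|\hat\phi_{h,\tau},\hat\uu_{h,\tau})(t^{n-1}+s\tau)$, so it remains to prove $\int_0^1 G(s)\,ds\le G(0)+G(1)$. Since $\phi_{h,\tau},\uu_{h,\tau}\in P_1^c(\Itau;\cdot)$ and $\hat\phi_{h,\tau}=\I_\tau^1\pi_h^1\phi$, $\hat\uu_{h,\tau}=\I_\tau^1\mathbf{P}^1_h\uu$ are piecewise linear in time, the errors $e_\phi:=\phi_{h,\tau}-\hat\phi_{h,\tau}$ and $e_\uu:=\uu_{h,\tau}-\hat\uu_{h,\tau}$ are affine in $s$, and so are $\phi_{h,\tau}$ and $\hat\phi_{h,\tau}$ individually. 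Because $G\ge 0$ by Lemma~\ref{lem:equiv}, it suffices to show that $G$ is convex on $[0,1]$: convexity gives $\int_0^1 G\le\tfrac12(G(0)+G(1))\le G(0)+G(1)$, so there is even a factor two to spare.

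For the convexity of $G$ I would split $\E_\alpha$ into the quadratic contributions $\tfrac{\gamma}{2}\|\nabla e_\phi\|_{L^2}^2$ and $\tfrac12\|e_\uu\|_{L^2}^2$, and the potential part $\int_\Omega\bigl(f(\phi_{h,\tau}|\hat\phi_{h,\tau})+\tfrac{\alpha}{2}|e_\phi|^2\bigr)\,dx$. The first two are nonnegative quadratic forms along affine curves, hence convex in $s$, their second derivatives being $\gamma\|\nabla(e_\phi(1)-e_\phi(0))\|_{L^2}^2\ge0$ and $\|e_\uu(1)-e_\uu(0)\|_{L^2}^2\ge0$. For the potential part I would use the identity $f(\phi|\hat\phi)+\tfrac{\alpha}{2}|\phi-\hat\phi|^2=g(\phi)-g(\hat\phi)-g'(\hat\phi)(\phi-\hat\phi)$, i.e. the Bregman divergence of $g:=f+\tfrac{\alpha}{2}(\cdot)^2$, which is convex because the choice $\alpha=\max\{\gamma,\gamma+f_1\}$ gives $g''=f''+\alpha\ge\gamma>0$ by (A3). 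This is the precise analytic meaning of ``convexity of the energy functional'', and it is exactly what the regularization $\alpha$ provides.

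The one delicate point, which I expect to be the main obstacle, is the potential part: although $\phi\mapsto g(\phi)-g(\hat\phi)-g'(\hat\phi)(\phi-\hat\phi)$ is convex in its first argument, the base point $\hat\phi_{h,\tau}(s)$ moves affinely as well, and a direct computation shows that $\frac{d^2}{ds^2}\int_\Omega\bigl(f(\phi_{h,\tau}|\hat\phi_{h,\tau})+\tfrac{\alpha}{2}|e_\phi|^2\bigr)\,dx$ carries the indefinite term $-\int_\Omega f'''(\hat\phi_{h,\tau})\,e_\phi\,|\partial_s\hat\phi_{h,\tau}|^2\,dx$, so strict pointwise convexity in $s$ is not immediate. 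To close this I would invoke the trapezoidal (Peano-kernel) identity $\int_0^1 G=\tfrac12(G(0)+G(1))-\tfrac12\int_0^1 s(1-s)G''(s)\,ds$, reducing the claim to bounding $-\int_0^1 s(1-s)G''\,ds$ by $G(0)+G(1)$. The nonnegative second derivatives of the quadratic terms and of the leading quartic part of $f$, the strong-convexity margin $\tfrac{\alpha}{2}|e_\phi|^2$, the growth bounds on $f''$ and $f'''$ from (A3), and the available $L^\infty(H^1)$ bounds on $\phi_{h,\tau}$ and $\hat\phi_{h,\tau}$ then control the indefinite contribution, with the factor two providing the necessary room. Finally, reassembling the three parts, multiplying by $\tau$, and undoing the rescaling yields the claimed estimate \eqref{eq:equivnorms}.
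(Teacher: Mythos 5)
Your first half is precisely the paper's own (one-line) argument: the paper justifies this lemma only by ``convexity of the energy functional and piecewise linearity of the discrete functions in time'', i.e.\ $G(s)\le(1-s)G(0)+sG(1)$ followed by integration, which would even give the sharper factor $\tau/2$. The most valuable part of your proposal is the correct observation that this convexity claim is not actually true as stated: since the base point $\hat\phi_{h,\tau}(s)$ of the Taylor remainder moves affinely as well, $G''$ carries the indefinite term $-\int_\Omega f'''(\hat\phi_{h,\tau})\,e_\phi\,|\partial_s\hat\phi_{h,\tau}|^2\,dx$ (plus cross terms from the varying $f''$); the regularization $\frac{\alpha}{2}|e_\phi|^2$ convexifies $g=f+\frac{\alpha}{2}(\cdot)^2$ in its \emph{first} argument only, and joint convexity of the Bregman divergence of $g$ fails for quartic-growth $f$. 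Your treatment of the quadratic terms and the rescaling are fine.

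The gap is in the closing absorption step. Carrying out your Peano-kernel patch, after absorbing the cross term into the good contribution $\int_\Omega g''(\phi_{h,\tau})|\partial_s e_\phi|^2\,dx\ge\gamma\|\partial_s e_\phi\|_{L^2}^2$ via Young, the remaining defect is of size $\int_\Omega(1+|\phi_{h,\tau}|+|\hat\phi_{h,\tau}|)^2\,e_\phi^2\,|\partial_s\hat\phi_{h,\tau}|^2\,dx$, which by H\"older, Sobolev embedding ($d\le3$) and the coercivity of Lemma~\ref{lem:equiv} gives at best $-G''(s)\le K\,G(s)$ with $K\simeq C\,\|\hat\phi_{h,\tau}(t^n)-\hat\phi_{h,\tau}(t^{n-1})\|_{H^1}^2$. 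The trapezoidal identity then yields $\int_0^1G\le\frac12\bigl(G(0)+G(1)\bigr)+\frac{K}{8}\int_0^1G$, so you can conclude only if $K\le4$: the indefinite contribution is proportional to $\int_0^1 G$ itself, not to $G(0)+G(1)$ with a small prefactor, and the ``factor two to spare'' cannot absorb it. The growth bounds of (A3) and the $L^\infty(H^1)$ bounds you invoke make $K$ \emph{bounded}, not small; smallness requires the one-step increment of $\hat\phi_{h,\tau}=\I_\tau^1\pi_h^1\phi$ to be small in $H^1$, which follows from the time regularity in (A7) together with $\tau$ small, but not from the lemma's stated hypotheses (A1)--(A6). (Indeed, read as a statement about arbitrary piecewise-linear-in-time arguments, the unqualified constant-one inequality is itself doubtful for an (A3)-admissible $f$ whose $f''$ has a large interior bump traversed by $\hat\phi_{h,\tau}$ within one step.) Your route is the paper's route made honest, and it does close in the regime where the lemma is actually used (Theorem~\ref{thm:fulldisk} assumes $\tau$ small), but as written the final step needs the explicit additional hypothesis of a small one-step increment, which you should state and verify rather than attribute to the spare factor two.
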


\subsection{Discrete error bounds and proof of Theorem~\ref{thm:fulldisk}}

By combination of the auxiliary results above, we can now prove the following bound for the discrete error. 

\begin{lemma}
\label{lem:diskerr}
Let (A0)--(A7) hold. Then 
\begin{align*}
\|\phi_{h,\tau} - \hat\phi_{h,\tau}&\|_{L^\infty(H^1)} +  \|\uu_{h,\tau} - \hat\uu_{h,\tau}\|_{L^\infty(L^2)} \\
&+ \|\bmu_{h,\tau} - \hbmu_{h,\tau}\|_{L^2(H^1)} + \|\bu_{h,\tau} - \hbu_{h,\tau}\|_{L^2(H^1)}
\leq C'_T(h^2 + \tau^2),
\end{align*}
with constant $C'_T$ independent of $h$ and $\tau$ but depending on the final time, the model parameters and  bounds for the solution provided by assumption~(A7).
\end{lemma}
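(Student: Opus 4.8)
The plan is to run a discrete Gronwall argument on the relative energy functional, combining the three auxiliary lemmas already established. The key observation is that Lemma~\ref{lem:error_residual} shows that the projected solution $(\hat\phi_{h,\tau},\hbmu_{h,\tau},\hbu_{h,\tau},\hbp_{h,\tau})$ solves a \emph{perturbed} version of Problem~\ref{prob:full} with residuals $\bar r_{1,h,\tau}$, $\bar r_{2,h,\tau}$, $\bar\rr_{3,h,\tau}$ on the right-hand sides, while the true discrete solution solves the unperturbed problem. The relative energy estimate of Lemma~\ref{lem:fullstab} therefore measures exactly the discrete error, and we want to absorb the residual contributions and close a recursion in $n$.

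First I would insert the residual bound of Lemma~\ref{lem:residual} into the right-hand side of Lemma~\ref{lem:fullstab}. The point of the $\tfrac{1}{4\bar C}$ factor appearing in Lemma~\ref{lem:residual} is precisely that, after multiplication by $\bar C$, the dissipation term it produces is $\tfrac14\int \D_{\bar\phi_{h,\tau}}(\cdot)$, which can be absorbed into the $\tfrac34\int\D_{\bar\phi_{h,\tau}}(\cdot)$ appearing on the left of Lemma~\ref{lem:fullstab}, leaving a strictly positive multiple of the dissipation. Abbreviating $E^k := \E_\alpha(\phi_{h,\tau},\uu_{h,\tau}\,|\,\hat\phi_{h,\tau},\hat\uu_{h,\tau})(t^k)$, this leaves an inequality of the form
\begin{align*}
E^{n} - E^{n-1} + \tfrac12 \int_{t^{n-1}}^{t^n} \D_{\bar\phi_{h,\tau}}(\bmu_{h,\tau}-\hbmu_{h,\tau}, \bu_{h,\tau}-\hbu_{h,\tau})\, ds
\le (\bar c + \bar C \hat c) \int_{t^{n-1}}^{t^n} E(s)\, ds + \bar C\hat C (h^4+\tau^4).
\end{align*}
Next I would use Lemma~\ref{lem:equivnorms} to replace $\int_{t^{n-1}}^{t^n} E(s)\,ds$ by $\tau(E^{n-1}+E^{n})$, turning the integral on the right into a genuine two-point recursion in the nodal values $E^k$. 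For $\tau$ small enough that the coefficient of $E^n$ can be moved to the left side (i.e.\ $1 - (\bar c+\bar C\hat c)\tau \ge \tfrac12$), this yields $E^{n} \le (1 + C\tau) E^{n-1} + C\tau(h^4+\tau^4)$, together with the absorbed positive dissipation term.

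The core step is then a discrete Gronwall inequality: summing over $n$ and using $E^0 = 0$ (the projections are defined so that the initial discrete error vanishes, since $\phi_{h,\tau}(0)=\pi_h^1\phi(0)$ and $\uu_{h,\tau}(0)=\P_h^1\uu(0)$ match the projected initial data exactly), we obtain $\max_n E^n \le e^{CT}\, C\, T\,(h^4+\tau^4)$, and simultaneously $\sum_n \int \D_{\bar\phi_{h,\tau}}(\cdots)\,ds \le C(h^4+\tau^4)$. Invoking the lower bound in Lemma~\ref{lem:equiv}, namely $c_0(\|\phi_{h,\tau}-\hat\phi_{h,\tau}\|_{H^1}^2 + \|\uu_{h,\tau}-\hat\uu_{h,\tau}\|_{L^2}^2) \le E^n$, converts the energy bound into the stated $L^\infty(H^1)$ and $L^\infty(L^2)$ bounds, while coercivity of $\D_{\bar\phi_{h,\tau}}$ (using $b\ge b_1$, $\eta\ge\eta_1$ from (A2),(A4)) and a Poincar\'e inequality on the zero-average chemical-potential differences convert the dissipation bound into the $L^2(H^1)$ bounds for $\bmu_{h,\tau}-\hbmu_{h,\tau}$ and $\bu_{h,\tau}-\hbu_{h,\tau}$; taking square roots gives the claimed $O(h^2+\tau^2)$ rate.

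The main obstacle I anticipate is not the Gronwall mechanics but the bookkeeping of constants so that the absorption actually closes. One must check that the constant $C_0'$ in the upper bound of Lemma~\ref{lem:equiv} is uniformly controlled along the discrete solution; this relies on the a-priori bound $\|\phi_{h,\tau}\|_{L^\infty(H^1)}\le\hat C$ from Lemma~\ref{lem:apriori} and the interpolation bound $\|\hat\phi_{h,\tau}\|_{L^\infty(H^1)}\le C$ from Lemma~\ref{lem:projerr}, so that $C_0'$ stays bounded and the equivalence between $E^n$ and the error norms is uniform in $h,\tau$. A second subtlety is the smallness requirement $\tau\le c$: this is exactly what makes the implicit coefficient $(1-(\bar c+\bar C\hat c)\tau)$ positive and bounded below, and it matches the hypothesis $0<h,\tau\le c$ in Theorem~\ref{thm:fulldisk}. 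For the $L^2(H^1)$ bound on the chemical potential I would additionally note that $\bmu_{h,\tau}-\hbmu_{h,\tau}$ need not have zero average, so one controls the average separately by testing the error version of \eqref{eq:disc_pert2} with a constant, exactly as in the a-priori estimate in Lemma~\ref{lem:apriori}, and then applies Poincar\'e to the gradient bound supplied by the dissipation.
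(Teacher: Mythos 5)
Your proposal is correct and follows essentially the same route as the paper's proof: you absorb the residual bound of Lemma~\ref{lem:residual} (via the $\tfrac{1}{4\bar C}$ dissipation factor) into the relative energy estimate of Lemma~\ref{lem:fullstab}, convert $\int_{t^{n-1}}^{t^n}\E_\alpha\,ds$ into nodal values with Lemma~\ref{lem:equivnorms}, apply the discrete Gronwall inequality with vanishing initial relative energy, and then recover the stated norms from Lemma~\ref{lem:equiv}, the coercivity of the dissipation, and the separate mean-control plus Poincar\'e argument for $\bmu_{h,\tau}-\hbmu_{h,\tau}$, exactly as in Section~\ref{sec:proof:thm:fulldisk_part1} and Appendix~\ref{subsec:mu}. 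The only cosmetic deviation is the bookkeeping of the per-step perturbation term (your $C\tau(h^4+\tau^4)$ versus the paper's $d^n=\bar C\hat C(h^2+\tau^2)^2$, whose summability in $n$ rests on the local-in-time solution norms hidden in $\hat C$), which does not affect the final rate.
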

\begin{proof}
With Lemma \ref{lem:equivnorms} and using $2\tau \le 1/(\bar c + \hat c)$, we get $(1- \bar c \tau) \ge e^{-\lambda \tau}$ and $(1+\bar c \tau) \le e^{\lambda \tau}$ with some constant $\lambda$ depending only on $\bar c+\hat c$. 
The previous results then lead to 
\begin{align*}
e^{-\lambda \tau} u^n &+ b^n \le e^{\lambda \tau} u^{n-1} + d^n
\end{align*}
with
$u^n =\E_\alpha(\phi_{h,\tau},\uu_{h,\tau}|\hat \phi_{h,\tau},\hat\uu_{h,\tau})(t^n)$,
$b^n =  \frac{1}{2} \int_{t^{n-1}}^{t^n} \D_{\bar \phi_{h,\tau}}(\bmu_{h,\tau}-\hbmu_{h,\tau}, \bu_{h,\tau}-\hbu_{h,\tau}) \, ds$, 
and 
$d^n =  \bar C \hat C (h^2 + \tau^2)^2$.
By the discrete Gronwall inequality \eqref{eq:discgronwall}, we thus obtain 
\begin{align*}
\E_\alpha(\phi_{h,\tau},\uu_{h,\tau}|,\hat \phi_{h,\tau},\hat \uu_{h,\tau})(t^n) &+ \int_0^{t^n} \D_{\bar \phi_{h,\tau}}(\bmu_{h,\tau} - \hbmu_{h,\tau}, \uu_{h,\tau} - \hat \uu_{h,\tau}) \, ds \\
&\le C_T \E_\alpha(\phi_{h,\tau},\uu_{h,\tau}|,\hat \phi_{h,\tau},\hat \uu_{h,\tau})(0) + C_T' (h^4 + \tau^4).
\end{align*}
Due to the choice of the initial values in Problem~\ref{prob:full}, the first term on the right hand side vanishes.
Lemma~\ref{lem:equiv} then allows us to estimate the relative energy from below by the norm. 
With the lower bounds in (A1) and (A2), we can further estimate the dissipation term from below to obtain the bound 
\begin{align*}
\|\nabla (\bmu_{h,\tau} - \hbmu_{h,\tau})\|_{L^2(L^2)}^2 + \|\nabla (\uu_{h,\tau} - \hbu_{h,\tau})\|_{L^2(L^2)}^2 \le C_T''(h^4 + \tau^4).
\end{align*}
In Appendix~\ref{subsec:mu}, we show that a corresponding estimate also holds for $\|\bmu_{h,\tau} - \hbmu_{h,\tau}\|_{L^2(L^2)}^2$, which completes the proof of the lemma. 
\end{proof}

\subsection*{Proof of Theorem~\ref{thm:fulldisk}, Part~1}
The error estimates \eqref{eq:con_res_1} now follow readily by the triangle inequality, and combination of Lemma~\ref{lem:projerr} and \ref{lem:diskerr}. 
\qed

\section{Proof of Theorem~\ref{thm:fulldisk}: Part~2}
\label{sec:proof:thm:fulldisk_part2}

With the help of the discrete stability estimate, see Lemma~\ref{lem:fullstab}, we now show that, under a mild restriction $\tau = c' h$ on the time step, the solution of Problem~\ref{prob:full} is unique.
Moreover, we establish optimal convergence rates also for the error in the pressure.

\subsection{Uniqueness of discrete solutions}

As before, we denote by $(\hat\phi_{h,\tau},\hbmu_{h,\tau},\hat\uu_{h,\tau},\hbp_{h,\tau})$ a given discrete solution of Problem~\ref{prob:full}, and we assume that (A0)--(A7) hold. 
Let us start our considerations with an elementary observation.
\begin{lemma}
Let $(\hat\phi_{h,\tau},\hbmu_{h,\tau},\hat\uu_{h,\tau},\hbp_{h,\tau})$ be another solution of Problem~\ref{prob:full}. 
Then the perturbed equations \eqref{eq:disc_pert1}--\eqref{eq:disc_pert4} hold with residuals 
$\bar r_{2,h,\tau}=0$ and 
\begin{align*}
\la \bar r_{1,h,\tau},\bar\psi_{h,\tau}\ra^n 
&= \la (\phi_{h,\tau}-\hat\phi_{h,\tau})\hbu_{h,\tau},\nabla\bar\psi_{h,\tau}\ra^n 
+ \la (b(\bar\phi_{h,\tau})-b(\hbphi_{h,\tau}))\nabla\hbmu_{h,\tau}, \nabla \bar\psi_{h,\tau}\ra^n,
\\
\la \bar \rr_{3,h,\tau},\bar\vv_{h,\tau} \ra^n 
&=  \la (\uu_{h,\tau}-\hat\uu_{h,\tau}) \cdot \nabla \hbu_{h,\tau},\bar\vv_{h,\tau}\ra^n 
+ \la (\eta(\bar\phi_{h,\tau}) - \eta(\hbphi_{h,\tau})) \nabla\hbu_{h,\tau},\nabla\bar\vv_{h,\tau} \ra^n \\
& \qquad \qquad - \la(\phi_{h,\tau}-\hat\phi_{h,\tau})\bar\vv_{h,\tau},\nabla\hbmu_{h,\tau}\ra^n.
\end{align*}
\end{lemma}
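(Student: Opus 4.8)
The plan is to re-run the computation behind Lemma~\ref{lem:error_residual}, the only structural change being that the hatted quantities now satisfy the discrete scheme \eqref{eq:pg1}--\eqref{eq:pg4} exactly rather than being projections of a continuous solution. Where Lemma~\ref{lem:error_residual} invoked the weak identities \eqref{eq:weak1}--\eqref{eq:weak4} to generate genuine consistency residuals, here the hatted solution obeys the \emph{discrete} equations, so those consistency terms are absent and only coefficient differences survive. Concretely, I would first record that both the unhatted solution $(\phi_{h,\tau},\bar\mu_{h,\tau},\uu_{h,\tau},\bar p_{h,\tau})$ and the hatted solution $(\hat\phi_{h,\tau},\hbmu_{h,\tau},\hat\uu_{h,\tau},\hbp_{h,\tau})$ satisfy \eqref{eq:pg1}--\eqref{eq:pg4} for all admissible test functions and all $n\le N$, and then cast the hatted solution's equations into the perturbed form \eqref{eq:disc_pert1}--\eqref{eq:disc_pert4}. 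Since the perturbed form differs from the hatted version of \eqref{eq:pg1}--\eqref{eq:pg4} only in that a few nonlinear coefficient slots carry the unhatted data $\phi_{h,\tau}$ and $\bar\phi_{h,\tau}$ in place of $\hat\phi_{h,\tau}$ and $\hbphi_{h,\tau}$, the residuals are fixed simply by restoring equality.

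Two of the identities come for free. Writing out the hatted version of \eqref{eq:pg2} gives $\la\hbmu_{h,\tau},\bar\xi_{h,\tau}\ra^n = \gamma\la\nabla\hat\phi_{h,\tau},\nabla\bar\xi_{h,\tau}\ra^n + \la f'(\hat\phi_{h,\tau}),\bar\xi_{h,\tau}\ra^n$, which is literally \eqref{eq:disc_pert2}; hence $\bar r_{2,h,\tau}=0$. Likewise the hatted velocity is discretely divergence free and satisfies \eqref{eq:pg4}, so \eqref{eq:disc_pert4} holds with no residual.

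The two surviving residuals then follow from a single add-and-subtract step per equation. For \eqref{eq:disc_pert1} I would subtract the hatted version of \eqref{eq:pg1}, namely $\la\dt\hat\phi_{h,\tau},\bar\psi_{h,\tau}\ra^n = \la\hat\phi_{h,\tau}\hbu_{h,\tau},\nabla\bar\psi_{h,\tau}\ra^n - \la b(\hbphi_{h,\tau})\nabla\hbmu_{h,\tau},\nabla\bar\psi_{h,\tau}\ra^n$, from the perturbed identity; the time-derivative term cancels, the mobility slot leaves $\la(b(\bar\phi_{h,\tau})-b(\hbphi_{h,\tau}))\nabla\hbmu_{h,\tau},\nabla\bar\psi_{h,\tau}\ra^n$, and the convection slot leaves the term $\la(\phi_{h,\tau}-\hat\phi_{h,\tau})\hbu_{h,\tau},\nabla\bar\psi_{h,\tau}\ra^n$, which is exactly the claimed $\bar r_{1,h,\tau}$. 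The same manipulation applied to \eqref{eq:pg3} versus \eqref{eq:disc_pert3} lets the identical pressure and time-derivative terms cancel and isolates the three coefficient differences in the convection, viscosity, and capillary terms, giving the stated $\bar\rr_{3,h,\tau}$.

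There is no genuine analytical obstacle here; the statement is purely algebraic and the work lies entirely in the bookkeeping. The two points to be careful about are tracking which factor of each nonlinear term is hatted and keeping the signs consistent, and checking that every linear contribution — the $\gamma\nabla\hat\phi_{h,\tau}$, $\hbmu_{h,\tau}$, $\dt\hat\uu_{h,\tau}$, pressure, and divergence terms — occurs identically on both sides and so drops out of the residuals. It is precisely this structure, with $\bar r_{2,h,\tau}=0$ and $\bar r_{1,h,\tau}$, $\bar\rr_{3,h,\tau}$ expressed through differences of the two solutions, that makes them controllable by the relative-energy estimate of Lemma~\ref{lem:fullstab} in the subsequent uniqueness argument.
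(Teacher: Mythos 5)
Your proposal is correct and follows essentially the same route as the paper, whose proof is the one-line observation that the claim "follows immediately from the variational identities defining the discrete solutions": you spell out exactly that bookkeeping, subtracting the hatted version of \eqref{eq:pg1}--\eqref{eq:pg4} from the perturbed forms \eqref{eq:disc_pert1}--\eqref{eq:disc_pert4} so that all linear terms cancel and only the coefficient differences survive, with $\bar r_{2,h,\tau}=0$ and \eqref{eq:disc_pert4} residual-free. The only cosmetic caveats are that the convective difference in $\bar \rr_{3,h,\tau}$ strictly inherits the skew-symmetric form $\la \cdot,\cdot\ra_\skw^n$ (as the estimates in Appendix~D implicitly reflect) and that overall signs depend on the paper's (somewhat inconsistent) conventions in \eqref{eq:disc_pert3} -- both immaterial since the residuals enter the relative-energy argument only through their norms.
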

The claim follows immediately from the variational identities defining the discrete solutions.
%
%
Similar to the previous section, we next state bounds for the corresponding residuals. 
\begin{lemma} \label{lem:res2}
Under the above assumptions, one has 
\begin{align*}
\int_{t^{n-1}}^{t^{n}}\|\bar r_{1,h,\tau}\|_{H^{-1}}^2 &+ \|\bar \rr_{3,h,\tau}\|^2_{H^{1}} \, ds 
\le \frac{1}{4 \bar C} \int_{t^{n-1}}^{t^n} \D_{\bar \phi_{h,\tau}}(\bmu_{h,\tau}-\hbmu_{h,\tau},  \bu_{h,\tau}-\hbu_{h,\tau}) \, ds \\
&+ \tilde c \int_{t^{n-1}}^{t^n} \E_\alpha(\phi_{h,\tau},\uu_{h,\tau}|\hat \phi_{h,\tau},\hat\uu_{h,\tau}) \, ds,
\end{align*}
with $\bar C$ denoting the constant of Lemma~\ref{lem:fullstab}.
Moreover, for any $0<h,\tau \le \tau_0$ sufficiently small, the constant $\tilde c$ can be chosen independent of $h$ and $\tau$. 
\end{lemma}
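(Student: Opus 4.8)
The plan is to bound each of the two residuals term by term, using duality together with Hölder's inequality, and then to distribute the resulting factors either onto the dissipation (peeling off genuinely gradient‑valued error contributions by Young's inequality, with the prescribed weight $\tfrac{1}{4\bar C}$) or onto the relative energy $\E_\alpha$ (via the norm equivalence of Lemma~\ref{lem:equiv}). The point that makes the constant $\tilde c$ independent of $h$ and $\tau$ is that the given solution $(\hat\phi_{h,\tau},\hbmu_{h,\tau},\hat\uu_{h,\tau},\hbp_{h,\tau})$ enjoys uniform bounds in strong norms, and these I would extract from the convergence rates already established in Part~1.

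Concretely, I would first combine Theorem~\ref{thm:fulldisk} with the triangle inequality to see that \emph{any} solution of Problem~\ref{prob:full} stays within $O(h^2+\tau^2)$ of the smooth reference solution in the norms of \eqref{eq:con_res_1}. Together with (A7), which yields $\uu\in C(H^3)\hookrightarrow C(W^{1,\infty})$ and $\nabla\mu\in C(L^2)\cap L^2(L^\infty)$ for $d\le3$, and with inverse estimates in space and time, this gives the uniform bounds
\[
\|\hbu_{h,\tau}\|_{L^\infty(L^\infty)}+\|\nabla\hbu_{h,\tau}\|_{L^\infty(L^\infty)}+\|\nabla\hbmu_{h,\tau}\|_{L^\infty(L^2)}+\|\nabla\hbmu_{h,\tau}\|_{L^2(L^\infty)}\le C,
\]
together with $\|\bar\phi_{h,\tau}-\hbphi_{h,\tau}\|_{L^\infty(L^\infty)}\to0$. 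It is exactly in passing from the $L^2(H^1)$‑type error bounds to these pointwise norms that the coupling $\tau=ch$ is used: an inverse estimate in space contributes a factor $h^{-d/2}$ and the restriction to a single subinterval a factor $\tau^{-1/2}$, and $h^{-d/2}\tau^{-1/2}(h^2+\tau^2)\to0$ precisely for $d\le3$ and $\tau\simeq h$, so the discrete quantities inherit the boundedness of the smooth solution.

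With these bounds in hand the residuals are routine. For $\bar r_{1,h,\tau}$, duality and the Lipschitz bound on $b$ from (A2) give $\|\bar r_{1,h,\tau}\|_{H^{-1}}\le\|(\phi_{h,\tau}-\hat\phi_{h,\tau})\hbu_{h,\tau}\|_{L^2}+b_3\|(\bar\phi_{h,\tau}-\hbphi_{h,\tau})\nabla\hbmu_{h,\tau}\|_{L^2}$; by Hölder, $H^1\hookrightarrow L^4$ and $\|\hbu_{h,\tau}\|_{L^\infty}\le C$, the first summand is controlled by $\|\phi_{h,\tau}-\hat\phi_{h,\tau}\|_{H^1}$, hence by $\E_\alpha$ through Lemma~\ref{lem:equiv}. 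For $\bar\rr_{3,h,\tau}$, measured in the dual norm of Lemma~\ref{lem:fullstab}, the convection term is bounded by $\|\uu_{h,\tau}-\hat\uu_{h,\tau}\|_{L^2}\|\nabla\hbu_{h,\tau}\|_{L^\infty}$, the viscosity term by the Lipschitz bound $\eta_3$ from (A4) times $\|\nabla\hbu_{h,\tau}\|_{L^\infty}$, and the capillary term by $\|\phi_{h,\tau}-\hat\phi_{h,\tau}\|_{L^4}\|\nabla\hbmu_{h,\tau}\|_{L^2}$; all of these collapse to $\E_\alpha$. The single delicate contribution is the $\nabla\hbmu_{h,\tau}$‑term, which I would control by pairing the $L^2$‑factor $\|\bar\phi_{h,\tau}-\hbphi_{h,\tau}\|_{L^2}$ against $\|\nabla\hbmu_{h,\tau}\|_{L^\infty(\text{space})}$, whose $L^2$‑in‑time norm is bounded uniformly by the display; since $\hbmu_{h,\tau}$ is piecewise constant in time this yields the required $\E_\alpha$‑type bound, the smallness $h,\tau\le\tau_0$ being used to keep the constant uniform. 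Integrating over $(t^{n-1},t^n)$ then gives the claim.

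The main obstacle is precisely this uniform control of the solution quantities, and above all of $\nabla\hbmu_{h,\tau}$. Because (A7) provides only $L^2(L^\infty)$, rather than $L^\infty(L^\infty)$, regularity for $\nabla\mu$, the chemical potential cannot be treated as generously as the velocity, and securing an $h,\tau$‑independent constant is what forces both the CFL‑type restriction $\tau=ch$ and the smallness hypothesis $h,\tau\le\tau_0$; the threshold $d\le3$ enters here through the inverse‑estimate exponent $h^{-d/2}$. Once these bounds are in place, the remaining manipulations are standard applications of Hölder's and Young's inequalities.
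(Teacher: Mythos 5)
Your overall architecture coincides with the paper's proof: bound both residuals by H\"older duality, absorb the gradient-type factors into $\tfrac{1}{4\bar C}\int\D$ by Young's inequality and the rest into $\E_\alpha$ via Lemma~\ref{lem:equiv}, and obtain the needed uniform strong-norm bounds on the second discrete solution by combining the Part~1 convergence estimate \eqref{eq:con_res_1} (which indeed applies to any solution of Problem~\ref{prob:full}) with space--time inverse inequalities, which is exactly where $\tau=c'h$ enters. The paper does precisely this, only with the weaker target norms $\|\hbu_{h,\tau}\|_{L^\infty(W^{1,3})}$ and $\|\hbmu_{h,\tau}\|_{L^\infty(W^{1,3})}$, for which the bootstrap gives $\|\bmu_{h,\tau}-\overline{\pi_h^0\mu}\|_{L^\infty(W^{1,3})}\le C\tau^{-1/2}h^{-1/2}\|\cdot\|_{L^2(H^1)}\le C(h^{-1/2}\tau^{3/2}+h^{3/2}\tau^{-1/2})$, comfortably bounded under $\tau=c'h$.

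The genuine gap is your treatment of the capillary term $\la(\phi_{h,\tau}-\hat\phi_{h,\tau})\bar\vv_{h,\tau},\nabla\hbmu_{h,\tau}\ra$. The lemma asserts a \emph{per-interval} estimate with a constant $\tilde c$ independent of $h$ and $\tau$. Your pairing $\|\bar\phi_{h,\tau}-\hbphi_{h,\tau}\|_{L^2}\,\|\nabla\hbmu_{h,\tau}\|_{L^\infty}$ makes $\tilde c$ on the interval $(t^{n-1},t^n)$ proportional to $M_n^2$, where $M_n$ is the spatial $L^\infty$-norm of $\nabla\hbmu_{h,\tau}$ there (a single number, by piecewise constancy in time). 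But, as you yourself note, (A7) plus the bootstrap only control $\|\nabla\hbmu_{h,\tau}\|_{L^2(L^\infty)}$, i.e.\ $\sum_n\tau M_n^2\le C$, which gives merely $M_n\le C\tau^{-1/2}$; so your per-interval constant degenerates like $\tau^{-1}$ and piecewise constancy does not rescue the claim. (One could salvage the final uniqueness conclusion with a weighted Gronwall argument using the summability of $\tau M_n^2$, but that proves a different lemma than the one stated.) The repair is the paper's choice of H\"older exponents: estimate $\la(\phi_{h,\tau}-\hat\phi_{h,\tau})\bar\vv_{h,\tau},\nabla\hbmu_{h,\tau}\ra\le\|\phi_{h,\tau}-\hat\phi_{h,\tau}\|_{L^6}\|\bar\vv_{h,\tau}\|_{L^6}\|\nabla\hbmu_{h,\tau}\|_{L^{3/2}}$, which only requires the uniformly-in-time available bound $\|\hbmu_{h,\tau}\|_{L^\infty(W^{1,3})}\le C$, and yields a genuinely $n$-, $h$- and $\tau$-independent $\tilde c$. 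A related minor slip: your claim that $h^{-d/2}\tau^{-1/2}(h^2+\tau^2)\to0$ for $d\le3$, $\tau\simeq h$ is false at $d=3$, where this quantity is only $O(1)$; that still suffices for \emph{boundedness} of $\nabla\hbu_{h,\tau}$ in $L^\infty(L^\infty)$, but with no margin, whereas the paper's $W^{1,3}$ norms avoid the borderline case altogether.
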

The proof of this assertion is provided in Appendix~\ref{app:residual}. 
By invoking the discrete stability estimate of Lemma~\ref{lem:fullstab}, which holds also in the present situation,
further noting that $\phi_{0,h}=\hat\phi_{0,h}$, $\uu_{0,h}=\hat\uu_{0,h}$, and proceeding similar to the proof of Lemma~\ref{lem:diskerr}, we see that  
\begin{align*}
    \mathcal{E}_\alpha(\phi_{h,\tau},\uu_{h,\tau}|\hat\phi_{h,\tau},\hat\uu_{h,\tau})(t^n) + \int_0^{t^n} \mathcal{D}_{\phi_{h,\tau}}(\bmu_{h,\tau}-\hbmu_{h,\tau},\bu_{h,\tau}-\hbu_{h,\tau}) \, ds \leq 0.
\end{align*}
From the lower bounds for the relative energy and dissipation functionals, we therefore conclude that  $\phi_{h,\tau} \equiv \hat\phi_{h,\tau}, \uu_{h,\tau} \equiv \hat\uu_{h,\tau}$, and in consequence also $\bmu_{h,\tau}\equiv\hbmu_{h,\tau}$. 
The uniqueness of the pressure finally follows with the arguments used in the proof of Theorem~\ref{thm:well-posed}. 
\qed

\subsection{Estimates for pressure error}\label{sec:ch7:sec51}


As a last step of our convergence analysis, we now establish bounds for the error in the pressure. 
We again split the error via
\begin{align*}
    \|\bar p - \bar p_{h,\tau}\|_{L^2(L^2)} 
    \le \|\bar p - \hbp_{h,\tau}\|_{L^2(L^2)} + \|\hbp_{h,\tau} - \bar p_{h,\tau}\|_{L^2(L^2)}, 
\end{align*}
into a projection error and a discrete error component. 
Recall that $\bar p = \bar \pi_\tau^0 p$ is the piecewise constant projection in time and $\hbp_{h,\tau} = \bar \pi_\tau^0 \pi_h^0 p$ involves another $L^2$-projection in space. 
By standard estimates for the projection operators, see Appendix~\ref{app:proj}, one obtains 
\begin{align*}
\|\bar p - \hbp_{h,\tau}\|_{L^2(L^2)} \le \|p - \pi_h^0 p\|_{L^2(L^2))} \le C h^2 \|p\|_2 \le C' h^2.    
\end{align*}
In order to bound the discrete error component, we use the following auxiliary result.
\begin{lemma}\label{lem:perropart1}
Let (A0)--(A7) hold and $\tau = c h$ be sufficiently small. Then 
\begin{align*}
\|\bar p_{h,\tau} - \hbp_{h,\tau}\|_{L^2(L^2)} 
\leq \|\dt\uu_{h,\tau} -\dt\hat\uu_{h,\tau}\|_{L^2(H^{-1})} + C_T'' (h^2 + \tau^2).
\end{align*}
\end{lemma}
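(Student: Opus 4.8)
The plan is to recover the pressure error from the discrete momentum balance via the Taylor--Hood inf-sup condition \eqref{eq:infsup}, exploiting that $\bar p_{h,\tau}-\hbp_{h,\tau}$ is piecewise constant in time. Writing $e_p^n$ for its (constant) value on the slab $I_n=(t^{n-1},t^n)$, one has $\|\bar p_{h,\tau}-\hbp_{h,\tau}\|_{L^2(L^2)}^2=\sum_n \tau\,\|e_p^n\|_{L^2}^2$, so it suffices to bound each $\|e_p^n\|_{L^2}$. By \eqref{eq:infsup},
\begin{align*}
\beta\,\|e_p^n\|_{L^2}\le \sup_{\vv_h\in\Vh^d}\frac{\la e_p^n,\div\vv_h\ra}{\|\vv_h\|_{H^1}},
\end{align*}
and the whole task reduces to estimating this divergence pairing against \emph{arbitrary}, not necessarily discretely divergence free, test functions $\vv_h\in\Vh^d$.

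Next I would subtract the perturbed momentum identity \eqref{eq:disc_pert3} from the discrete one \eqref{eq:pg3}, tested with a time-constant $\bar\vv_{h,\tau}$ equal to $\vv_h$ on $I_n$. Since $\vv_h$ is time-independent, the pressure contribution equals $\tau\,\la e_p^n,\div\vv_h\ra$, while the inertial term collapses to the increment $\la(\uu_{h,\tau}-\hat\uu_{h,\tau})(t^n)-(\uu_{h,\tau}-\hat\uu_{h,\tau})(t^{n-1}),\vv_h\ra=\tau\,\la\dt(\uu_{h,\tau}-\hat\uu_{h,\tau}),\vv_h\ra$, using piecewise linearity in time. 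Solving for the divergence pairing and dividing by $\tau$ isolates the leading contribution $\sup_{\vv_h}\la\dt(\uu_{h,\tau}-\hat\uu_{h,\tau}),\vv_h\ra/\|\vv_h\|_{H^1}\le\|\dt\uu_{h,\tau}-\dt\hat\uu_{h,\tau}\|_{H^{-1}}$, which, after the time summation, reproduces the first term of the asserted bound (up to the inf-sup constant); this term is deliberately left unestimated here and handled separately.

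Then the remaining terms must be shown to be $O(h^2+\tau^2)$. The viscous difference collapses to $\la\eta(\bar\phi_{h,\tau})\nabla(\bu_{h,\tau}-\hbu_{h,\tau}),\nabla\vv_h\ra^n$; Cauchy--Schwarz together with $\|\nabla\vv_h\|_{L^2(I_n;L^2)}=\sqrt\tau\,\|\nabla\vv_h\|_{L^2}$ gains a factor $\sqrt\tau$, so that after dividing by $\tau$ and reinserting the $\tau$-weight in $\sum_n\tau\|e_p^n\|_{L^2}^2$ one is left with $\|\nabla(\bu_{h,\tau}-\hbu_{h,\tau})\|_{L^2(L^2)}$, which is $O(h^2+\tau^2)$ by Lemma~\ref{lem:diskerr}. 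The convective and coupling differences are treated in the same spirit, now using the $L^\infty(L^2)$- and $L^2(H^1)$-bounds on $\uu_{h,\tau},\phi_{h,\tau}$ and the projected quantities, together with spatial inverse estimates; this is exactly where the scaling $\tau=ch$ is used, balancing the $1/\tau$ against inverse powers of $h$. Finally, the residual $\bar\rr_{3,h,\tau}$ must now be measured against all of $\Vh^d$ rather than only $\Zh$: the sole new contribution relative to the $\Zh^*$-estimate is the pressure projection error $\la\hbp_{h,\tau}-p,\div\vv_h\ra$, controlled by $\|p-\pi_h^0 p\|_{L^2}\le Ch^2$, everything else being covered by Lemma~\ref{lem:residual}.

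The hard part is the bookkeeping of the $1/\tau$ factor produced by solving for a piecewise-constant-in-time pressure: every term on the right must recover a compensating power of $\tau$ — a clean $\sqrt\tau$ for the linear viscous and time-derivative contributions, but only through inverse estimates and the CFL-type coupling $\tau=ch$ for the nonlinear convection and $\phi\nabla\mu$ coupling terms tested against general, non-divergence-free $\vv_h$. Verifying that this balance actually yields the optimal $O(h^2+\tau^2)$ rate, rather than a suboptimal one, is the delicate step, and is the reason the lemma is stated under the restriction $\tau=ch$.
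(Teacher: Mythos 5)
Your proposal is correct and follows essentially the same route as the paper's proof: the discrete inf-sup condition \eqref{eq:infsup} applied to the difference of \eqref{eq:pg3} and \eqref{eq:disc_pert3}, the time-derivative term deferred to Lemma~\ref{lem:time_derivative}, the viscous, convective and coupling differences bounded via Lemma~\ref{lem:diskerr}, the residual handled by (the $\Vh^d$-version of) Lemma~\ref{lem:residual} including the pressure projection term, and the uniform $L^\infty$ bound on $\bar\uu_{h,\tau}$ obtained from the inverse inequality \eqref{eq:inverse} together with $\tau = c h$ --- your slab-wise $1/\tau$ bookkeeping is just the pointwise-in-$n$ version of the paper's direct $L^2$-in-time estimate \eqref{eq:press_est_full_disc}, so the $\tau$-powers cancel automatically by Cauchy--Schwarz in time rather than by any delicate balance. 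The only minor imprecision is your claim that inverse estimates and the coupling $\tau = c h$ are also needed for the $\phi\nabla\mu$ term: its factor $\|\bar\phi_{h,\tau}\|_{L^3}$ is uniformly bounded by the a priori estimate of Lemma~\ref{lem:apriori}, and the restriction $\tau = c h$ enters only through the convective term's $\|\bar\uu_{h,\tau}\|_{L^\infty}$ factor.
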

\begin{proof}
From the discrete inf-sup condition \eqref{eq:infsup} and using the variational identities \eqref{eq:pg3} and \eqref{eq:disc_pert3}, we can deduce that
\begin{align}\label{eq:press_est_full_disc}
&\beta^2 \int_{t^{n-1}}^{t^n} \|\bar p_{h,\tau} - \hbp_{h,\tau} \|_{L^2}^2 \, ds \\
&\qquad \leq  C\int_{t^{n-1}}^{t^n}\norm{\dt\uu_{h,\tau} -\dt\hat\uu_{h,\tau}}_{\Zh^*}^2 +\|\bar\uu_{h,\tau}\|_{L^\infty}^2\norm{\bu_{h,\tau}-\hbu_{h,\tau}}_{H^1}^2    \nonumber\\
&\qquad \qquad + \|\bu_{h,\tau}-\hbu_{h,\tau}\|_{H^1}^2 + \|\bar\phi_{h,\tau}\|^2_{L^3} \|\nabla(\bmu_{h,\tau}-\hbmu_{h,\tau})\|_{L^2}^2 + \|\bar \rr_{3,h,\tau}\|_{\Zh^*}^2 \, ds \nonumber\\
& \qquad \leq C \int_0^t\norm{\dt\uu_h -\dt\hat\uu_h}_{\Zh^*}^2 +  \|\bar\uu_{h,\tau}\|_{L^\infty}^2 \|\bu_{h,\tau}-\hbu_{h,\tau}\|_{H^1}^2 \, ds +  C'_T(h^4 + \tau^4). \nonumber
\end{align}
In the final step, we used the convergence estimates  \eqref{eq:con_res_1} of Theorem~\ref{thm:fulldisk} for the last term.
The second term can further be bounded by
\begin{align*}
\int_{t^{n-1}}^{t^n} \| \bar\uu_{h,\tau}\|_{L^\infty}^2 & \|\bu_{h,\tau} -\hbu_{h,\tau}\|_{H^1}^2\, ds \\
&\leq \int_{t^{n-1}}^{t^n} (\|\bar\uu_{h,\tau}-\hbu_{h,\tau}\|_{L^\infty}^2  + \|\hbu_{h,\tau}\|_{L^\infty}^2)\norm{\bu_{h,\tau}-\hbu_{h,\tau}}_{H^1}^2 \, ds \\
&\leq (\norm{\bar\uu_{h,\tau}-\hbu_{h,\tau}}_{L^\infty(L^\infty)}^2  + \norm{\hbu_{h,\tau}}_{L^\infty(L^\infty)}^2) \int_{t^{n-1}}^{t^n}\norm{\bu_{h,\tau}-\hbu_{h,\tau}}_{H^1}^2 \, ds \\
&\leq (\norm{\bar\uu_{h,\tau}-\hbu_{h,\tau}}_{L^\infty(L^\infty)}^2  + \norm{\hbu_{h,\tau}}_{L^\infty(L^\infty)}^2)C'_T(h^4 + \tau^4)
= (*).
\end{align*}
For the first term we use the inverse inequality \eqref{eq:inverse} in space with $p=\infty, q=2, d\leq 3$. The second term is uniformly bounded, which follows from the regularity of the solution and the properties of the projection operators. 
Together with \eqref{eq:con_res_1}, we then obtain
\begin{align*}
(*) 
&\leq C(h^{-3}\norm{\bar\uu_{h,\tau}-\hbu_{h,\tau}}_{L^\infty(L^2)}^2  + 1)(h^4 + \tau^4)  \\
&\leq Ch^{-3}(h^4 + \tau^4)(h^4 + \tau^4) + C(h^4+ \tau^4)
\le C (h^4 + \tau^4),
\end{align*}
where we used $\tau \approx h$ in the last step. 
This already proves the assertion of the lemma.
\end{proof}

In order to complete the error estimate for the pressure, we use the following observation, which essentially follows the arguments given in \cite{Ayuso,John2016}. 
\begin{lemma} \label{lem:time_derivative}
Under the assumptions of the previous lemma, there holds
\begin{align*}
\|\dt\uu_{h,\tau} -\dt\hat\uu_{h,\tau}\|_{L^2(H^{-1})} 
\le C'_T \, (h^2 + \tau^2).
\end{align*}
\end{lemma}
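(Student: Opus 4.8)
The plan is to reduce the negative-norm estimate to a bound over discretely divergence-free test functions, where the pressure contribution disappears, and then to read off the right-hand side from the momentum error equation. Write $\ww_{h,\tau} := \dt\uu_{h,\tau} - \dt\hat\uu_{h,\tau}$, which is piecewise constant in time with values in $\Vh^d$. The first point to check is that $\ww_{h,\tau}$ is in fact discretely divergence free on each interval, i.e. $\ww_{h,\tau}|_{(t^{n-1},t^n)} \in \Zh$. Since $\hat\uu_{h,\tau} = \I_\tau^1\P_h^1\uu$ takes values in $\Zh$ by construction, it suffices to show $\uu_{h,\tau}(t^n) \in \Zh$ for every $n$. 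Testing \eqref{eq:pg4} with $\bar q_{h,\tau}$ supported on a single interval shows that the time-average of $\div\uu_{h,\tau}$ over $(t^{n-1},t^n)$ is $L^2$-orthogonal to $\Qh$; as $\uu_{h,\tau}$ is affine in time this average equals the midpoint value, so $\tfrac12(\uu_{h,\tau}(t^{n-1})+\uu_{h,\tau}(t^n)) \in \Zh$. Combined with $\uu_{h,\tau}(0) = \P_h^1\uu(0) \in \Zh$, an induction in $n$ yields $\uu_{h,\tau}(t^n)\in\Zh$, hence $\ww_{h,\tau}\in P_0(\Itau;\Zh)$.

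Next I would invoke the equivalence, for discretely divergence-free discrete fields, of the genuine $H^{-1}$-norm and the discrete dual norm $\|\cdot\|_{\Zh^*}$. For $\vv\in H^1(\Omega)^d$ let $P_{\Zh}\vv\in\Zh$ denote its $L^2$-orthogonal projection onto $\Zh$. Since $\ww_{h,\tau}\in\Zh$ we have $\la\ww_{h,\tau},\vv-P_{\Zh}\vv\ra=0$, so $\la\ww_{h,\tau},\vv\ra = \la\ww_{h,\tau},P_{\Zh}\vv\ra \le \|\ww_{h,\tau}\|_{\Zh^*}\|P_{\Zh}\vv\|_{H^1}$; together with the $H^1$-stability of $P_{\Zh}$ on the quasi-uniform mesh (A5) this gives $\|\ww_{h,\tau}\|_{H^{-1}} \le C\|\ww_{h,\tau}\|_{\Zh^*}$. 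It therefore remains to bound $\|\ww_{h,\tau}\|_{\Zh^*}$. For $\vv_h\in\Zh$ the difference of the momentum identities \eqref{eq:pg3} and \eqref{eq:disc_pert3} gives, after the pressure term $\la\bar p_{h,\tau}-\hbp_{h,\tau},\div\vv_h\ra^n$ has dropped out,
\begin{align*}
\la\ww_{h,\tau},\vv_h\ra^n
&= \la\uu_{h,\tau}\cdot\nabla(\bar\uu_{h,\tau}-\hbu_{h,\tau}),\vv_h\ra_\skw^n
 - \la\eta(\bar\phi_{h,\tau})\nabla(\bar\uu_{h,\tau}-\hbu_{h,\tau}),\nabla\vv_h\ra^n \\
&\quad + \la\phi_{h,\tau}\nabla(\bmu_{h,\tau}-\hbmu_{h,\tau}),\vv_h\ra^n - \la\bar\rr_{3,h,\tau},\vv_h\ra^n,
\end{align*}
up to the rearrangement of the coupling term already used in Lemma~\ref{lem:error_residual}.

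Finally I would estimate each term on the right by $C\|\vv_h\|_{H^1}$ times a factor that is square-integrable in time with $L^2(0,t^n)$-norm of order $h^2+\tau^2$. The viscous term uses the upper bound on $\eta$ from (A4) and $\|\nabla(\bar\uu_{h,\tau}-\hbu_{h,\tau})\|_{L^2}$; the coupling term uses $\|\phi_{h,\tau}\|_{L^3}$ (bounded via Lemma~\ref{lem:apriori}) and $\|\nabla(\bmu_{h,\tau}-\hbmu_{h,\tau})\|_{L^2}$; the residual term is handled by $\|\bar\rr_{3,h,\tau}\|_{\Zh^*}$. All of these are controlled to order $h^2+\tau^2$ by the discrete error bound of Lemma~\ref{lem:diskerr} together with the residual estimate of Lemma~\ref{lem:residual}. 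The convective term is the delicate one: bounding it by $\|\uu_{h,\tau}\|_{L^\infty}\|\nabla(\bar\uu_{h,\tau}-\hbu_{h,\tau})\|_{L^2}\|\vv_h\|_{L^2}$ requires a uniform $L^\infty$-bound on $\uu_{h,\tau}$, which I would obtain exactly as in Lemma~\ref{lem:perropart1}, splitting $\uu_{h,\tau}=(\uu_{h,\tau}-\hat\uu_{h,\tau})+\hat\uu_{h,\tau}$, applying the inverse estimate \eqref{eq:inverse} to the $L^\infty(L^2)$-bound on $\uu_{h,\tau}-\hat\uu_{h,\tau}$ under the coupling $\tau\approx h$, and using the uniform $L^\infty$-boundedness of $\hat\uu_{h,\tau}$ from (A7). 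Squaring, integrating over each interval, summing over $n$ and taking the square root then yields $\|\ww_{h,\tau}\|_{L^2(H^{-1})}\le C'_T(h^2+\tau^2)$. The main obstacle is precisely this control of the convective term, together with the $H^1$-stability of the projection $P_{\Zh}$ entering the $\Zh^*$-to-$H^{-1}$ reduction; both are where the quasi-uniformity (A5) and the step-size coupling $\tau\approx h$ are genuinely used.
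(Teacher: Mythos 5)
Your proposal is correct and takes essentially the same route as the paper's proof in Appendix~\ref{app:time-derivative}: your reduction $\|\ww\|_{H^{-1}} \le C\|\ww\|_{\Zh^*}$ for $\ww \in \Zh$ via the $H^1$-stability of the $L^2$-projection onto $\Zh$ is precisely the content of Lemmas~\ref{lem:ayuso1}--\ref{lem:ayuso2} there (your $P_{\Zh}$ is the discrete Leray projection $\Pi_h$, and $\|\Ah^{-1/2}\bbf\|_{L^2} = \|\bbf\|_{\Zh^*}$ for $\bbf \in \Zh$, so the discrete Stokes operator calculus is the same estimate in different notation), after which both arguments test the difference of the momentum identities on $\Zh$ and control the convective term through the inverse inequality under $\tau \approx h$, exactly as in Lemma~\ref{lem:perropart1}. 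Your explicit induction showing $\uu_{h,\tau}(t^n) \in \Zh$, hence $\dt\uu_{h,\tau} - \dt\hat\uu_{h,\tau} \in P_0(\Itau;\Zh)$, makes precise a point the paper leaves implicit, but is not a different method.
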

A detailed proof of this assertion is provided in Appendix~\ref{app:time-derivative}. 

\subsection*{Completion of the proof}
By combination of the previous results, we now immediately obtain the estimate \eqref{eq:con_res_2} of Theorem~\ref{thm:fulldisk}, which concludes our theoretical considerations.
\qed

\section{Numerical tests}\label{sec:num}
For confirmation of our theoretical findings, we now present some computational results and convergence rates for a typical test problem. 

\subsection*{Model problem}

We consider the domain $\Omega=(0,1)^2$, which is identified with the two-torus $\mathbb{T}^2$, i.e., \eqref{eq:chns1}--\eqref{eq:chns4} is complemented by periodic boundary conditions. 
We further set $T=2$ for the final time and choose the  following initial conditions
\begin{equation*}
 \phi_0 = 0.5+0.25\cos(2\pi x)\cos(2\pi y), \uu_0=0.25(-\sin(\pi x)^2\sin(2\pi y),\sin(\pi y)^2\sin(2\pi x)).
\end{equation*}
The remaining model parameters are chosen as $\gamma=0.001$, $f(\phi)=(\phi-0.99)^2(\phi-0.01)^2$, $b(\phi)= 0.1(1-\phi)^2\phi^2 + 10^{-3}$, and $\eta = 2.5\cdot 10^{-4}(\phi+1)^2 + 10^{-3}$.
%


All numerical results presented in the following are based on the scheme defined in Problem~\ref{prob:full}. Since all nonlinearities appearing here are polynomial, all integrals arising in the implementation can be computed exactly. The nonlinear systems at every time-step are solved by the Newton method with tolerance $10^{-12}$. The linear systems in every Newton-step are solved by a sparse-direct solver. The implementation was entirely written in \textsc{MatLab}.

Some snapshots of the evolution of the phase fraction $\phi_{h,\tau}$ are depicted in Figure~\ref{fig:evophichns}, and corresponding pictures of the velocity are depicted in Figure~\ref{fig:evouchns} . For similar experiments see  \cite{Han2016,Han2015,LiShen2020}. 
%
\begin{figure}[htbp!]
\centering
\footnotesize
\begin{tabular}{ccc}
    \includegraphics[trim={3.3cm 1.4cm 1.5cm 0.5cm},clip,scale=0.32]{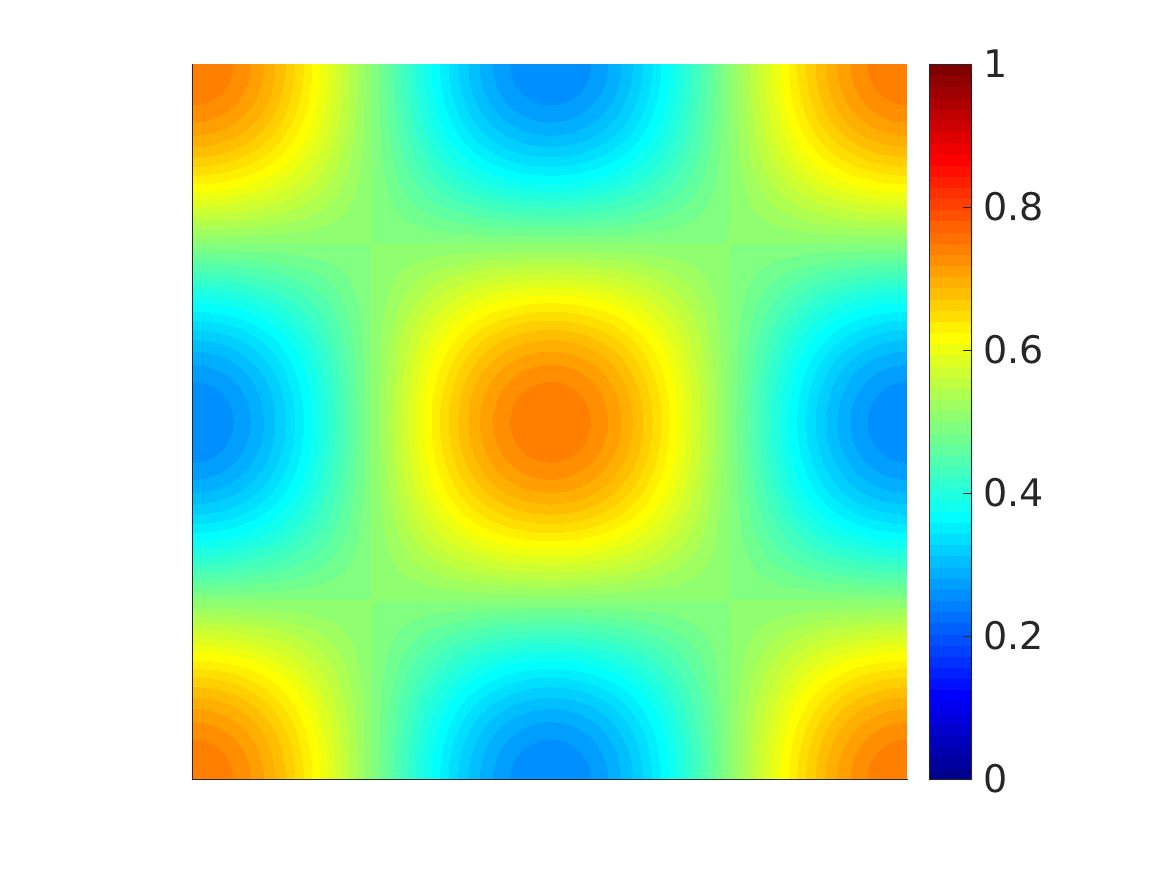} 
    &
    \includegraphics[trim={3.3cm 1.4cm 1.5cm 0.5cm},clip,scale=0.32]{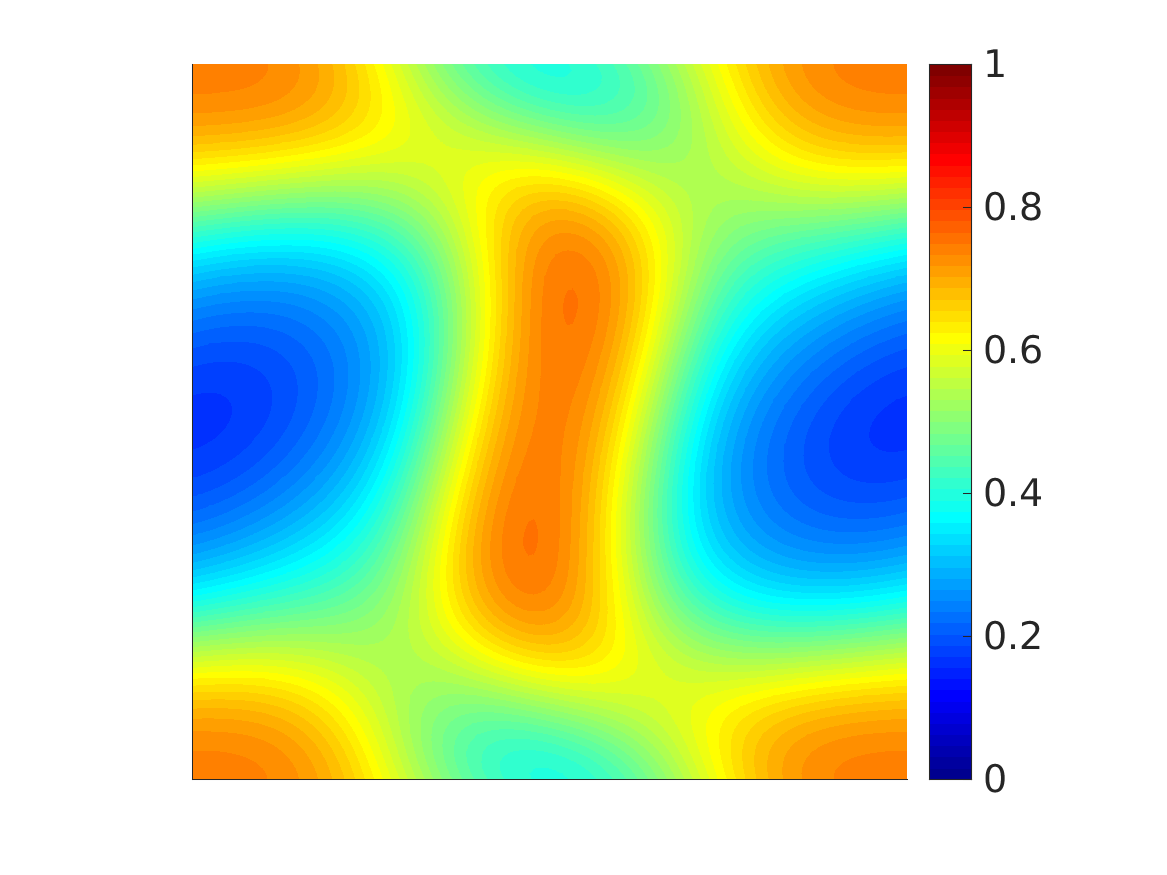}  
    &
    \includegraphics[trim={3.3cm 1.4cm 1.5cm 0.5cm},clip,scale=0.32]{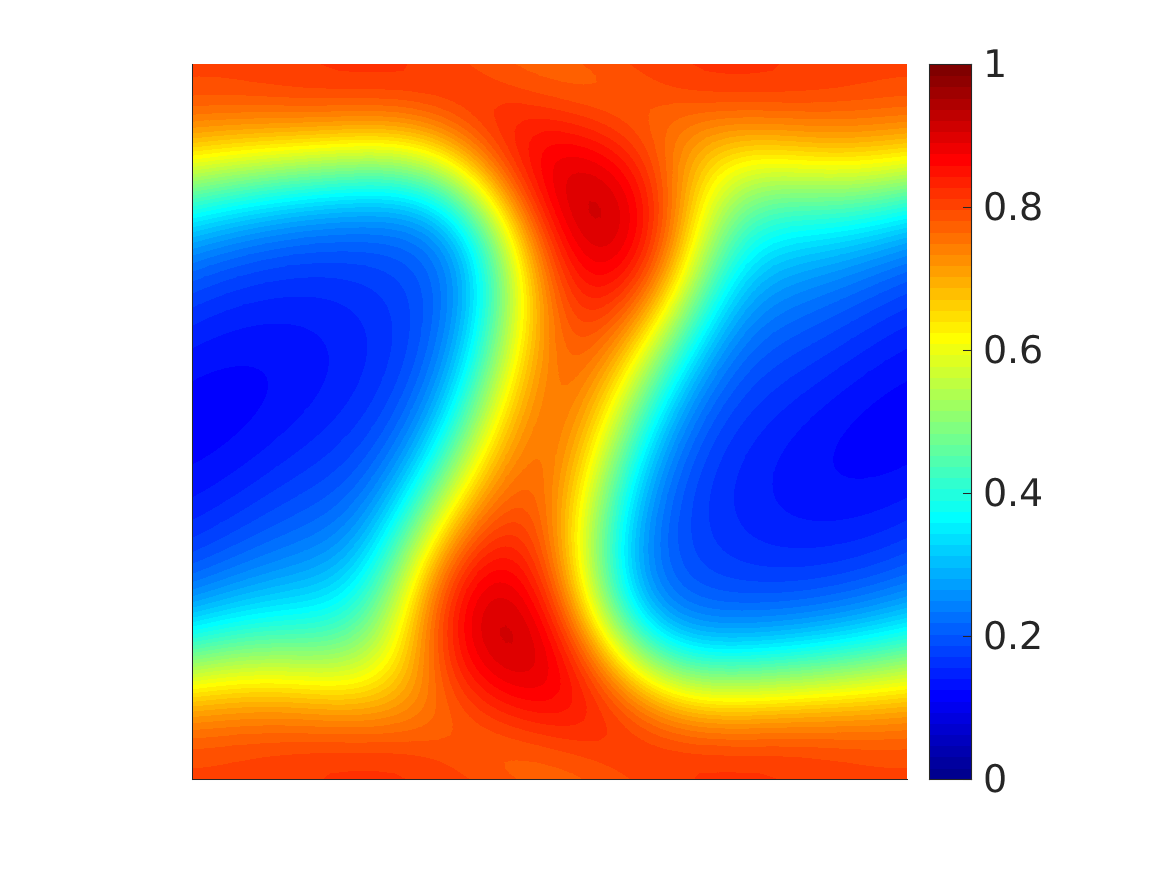}  \\[-0.5em]
    \hspace{-1.5em}t=0 & \hspace{-1.5em}t=0.4 & \hspace{-1.5em}t=0.8 \\
    \includegraphics[trim={3.3cm 1.4cm 1.5cm 0.5cm},clip,scale=0.32]{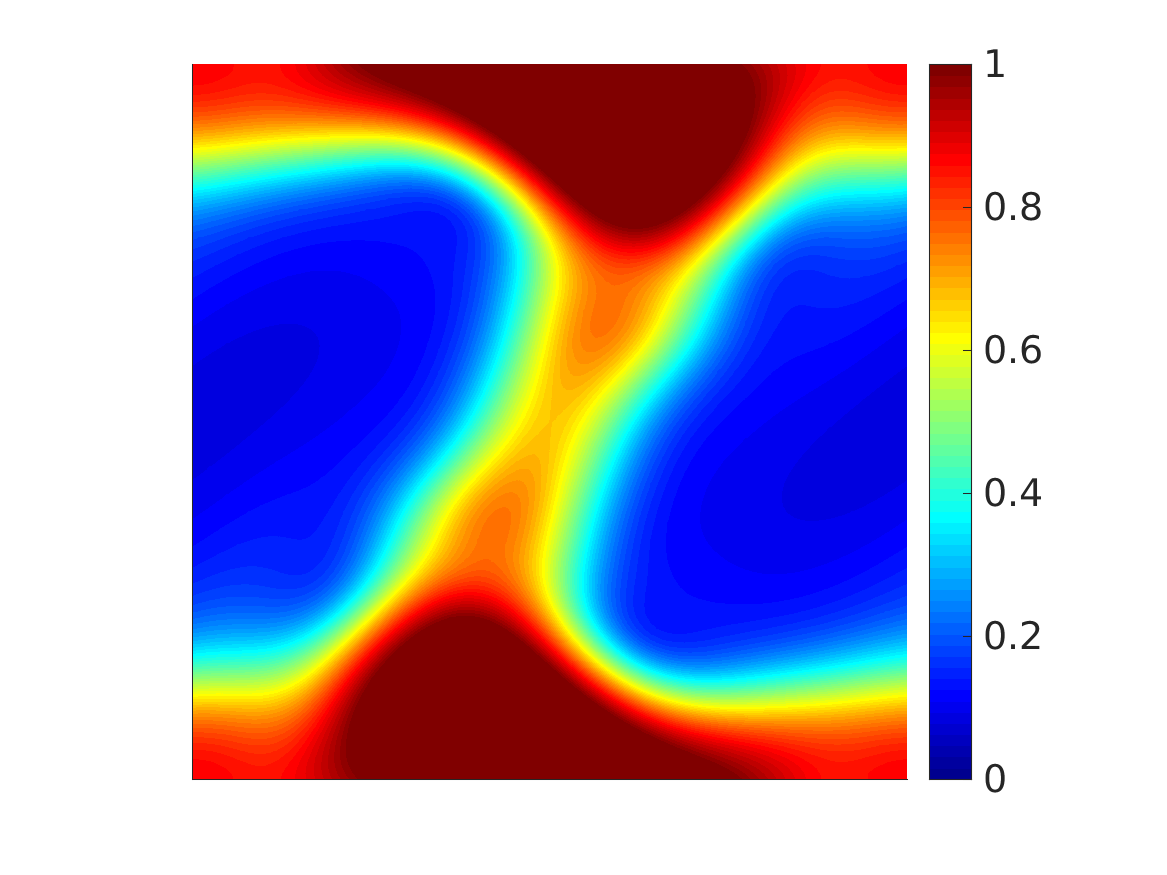}  
    &
    \includegraphics[trim={3.3cm 1.4cm 1.5cm 0.5cm},clip,scale=0.32]{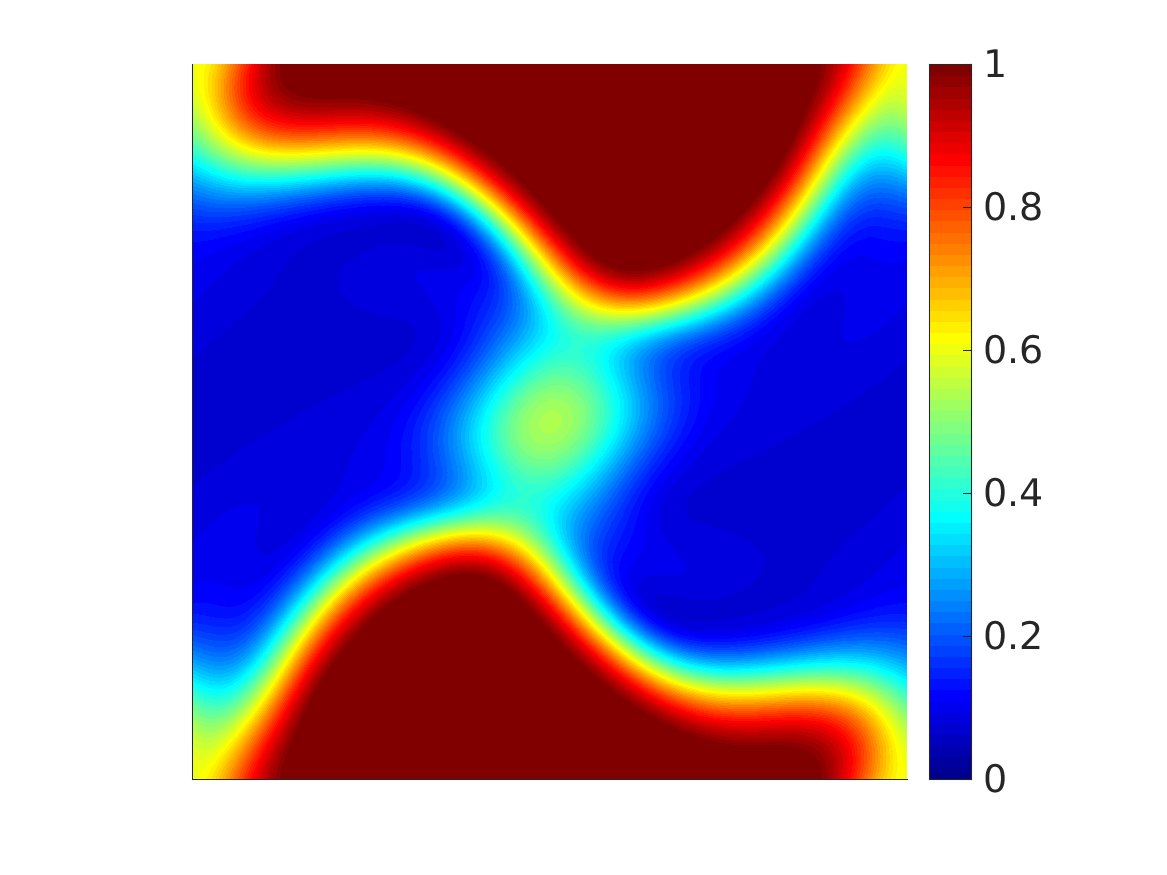} 
    &
    \includegraphics[trim={3.3cm 1.4cm 1.5cm 0.5cm},clip,scale=0.32]{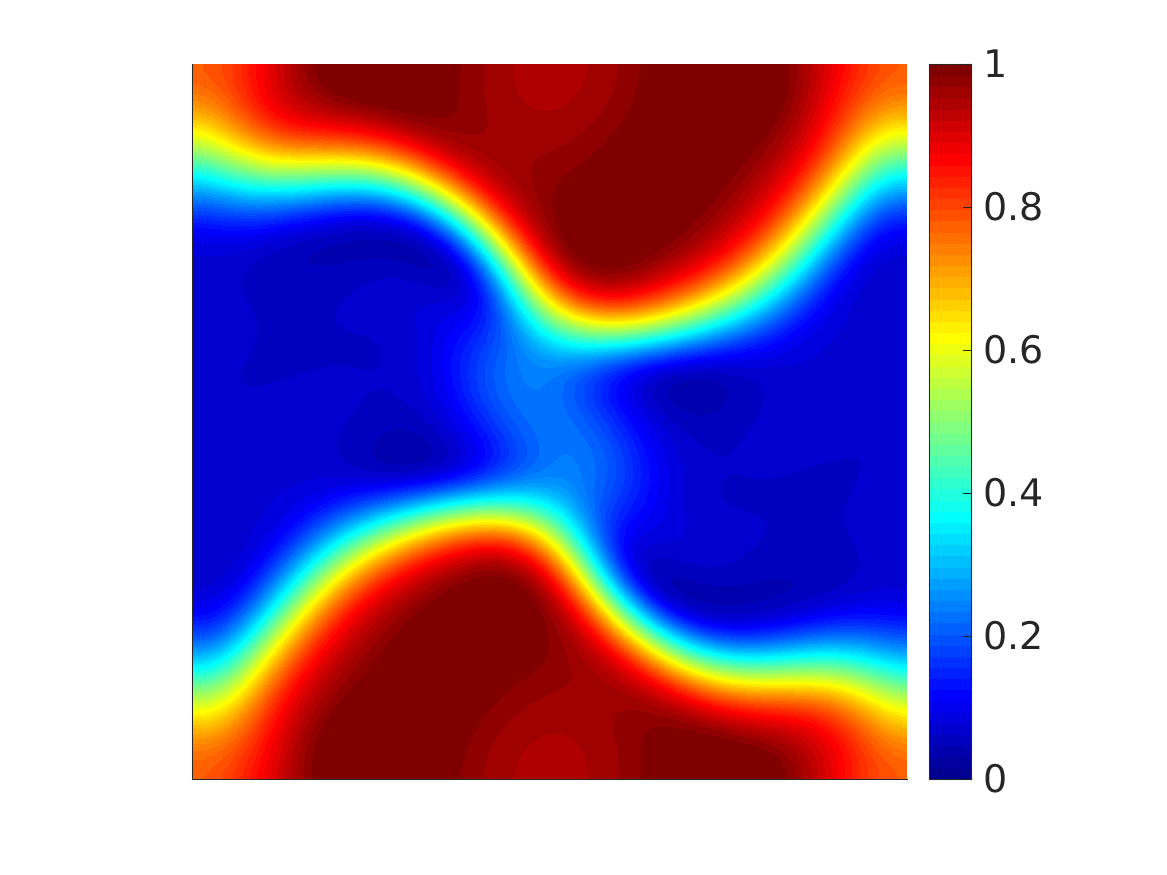} \\[-0.5em]
    \hspace{-1.5em}t=1.2 & \hspace{-1.5em}t=1.6 & \hspace{-1.5em}t=2 \\
\end{tabular}
    \caption{Snapshots of the volume fraction $\phi_{h,\tau}$ for $h=2^{-7}$ and $\tau=0.25 \cdot 2^{-7}$.\label{fig:evophichns}}
\end{figure}
\begin{figure}[htbp!]
\centering
\footnotesize
\begin{tabular}{ccc}
    \includegraphics[trim={3.3cm 1.4cm 1.5cm 0.0cm},clip,scale=0.32]{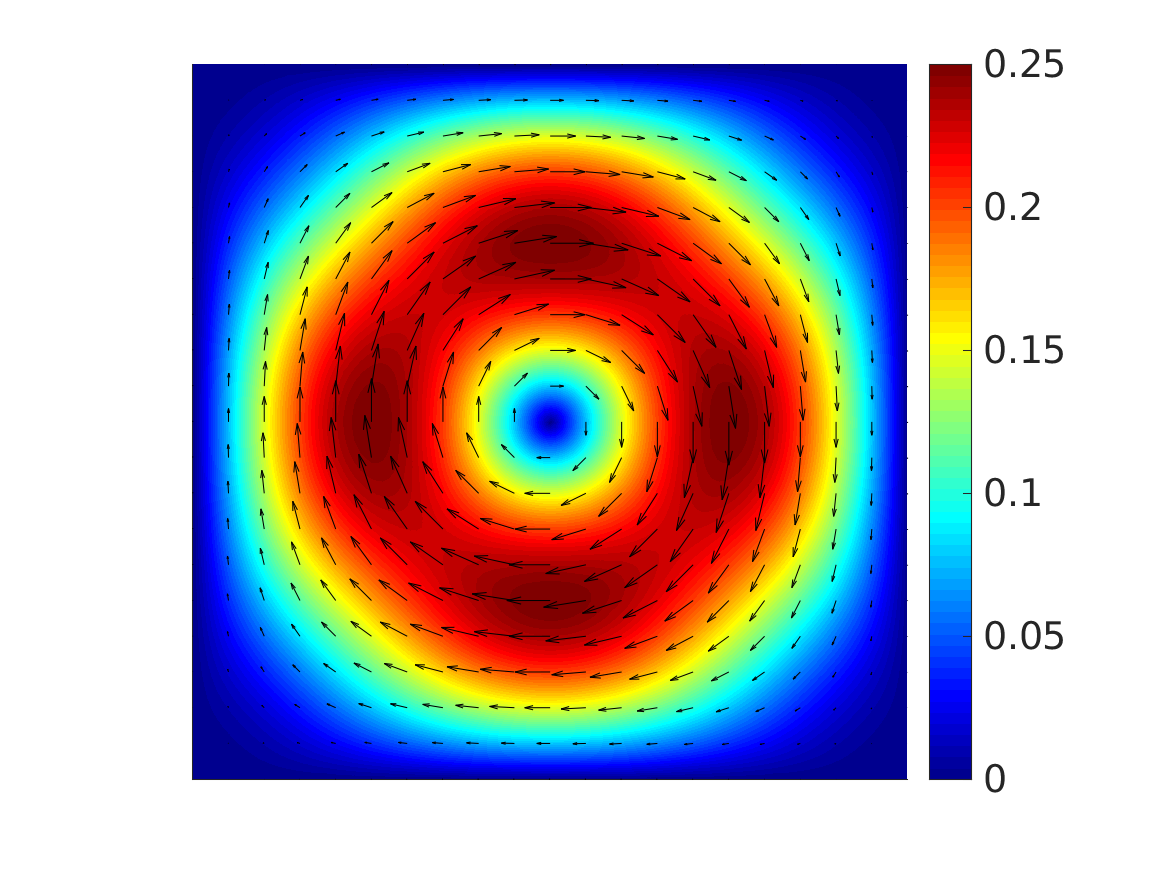} 
    &
    \includegraphics[trim={3.3cm 1.4cm 1.5cm 0.0cm},clip,scale=0.32]{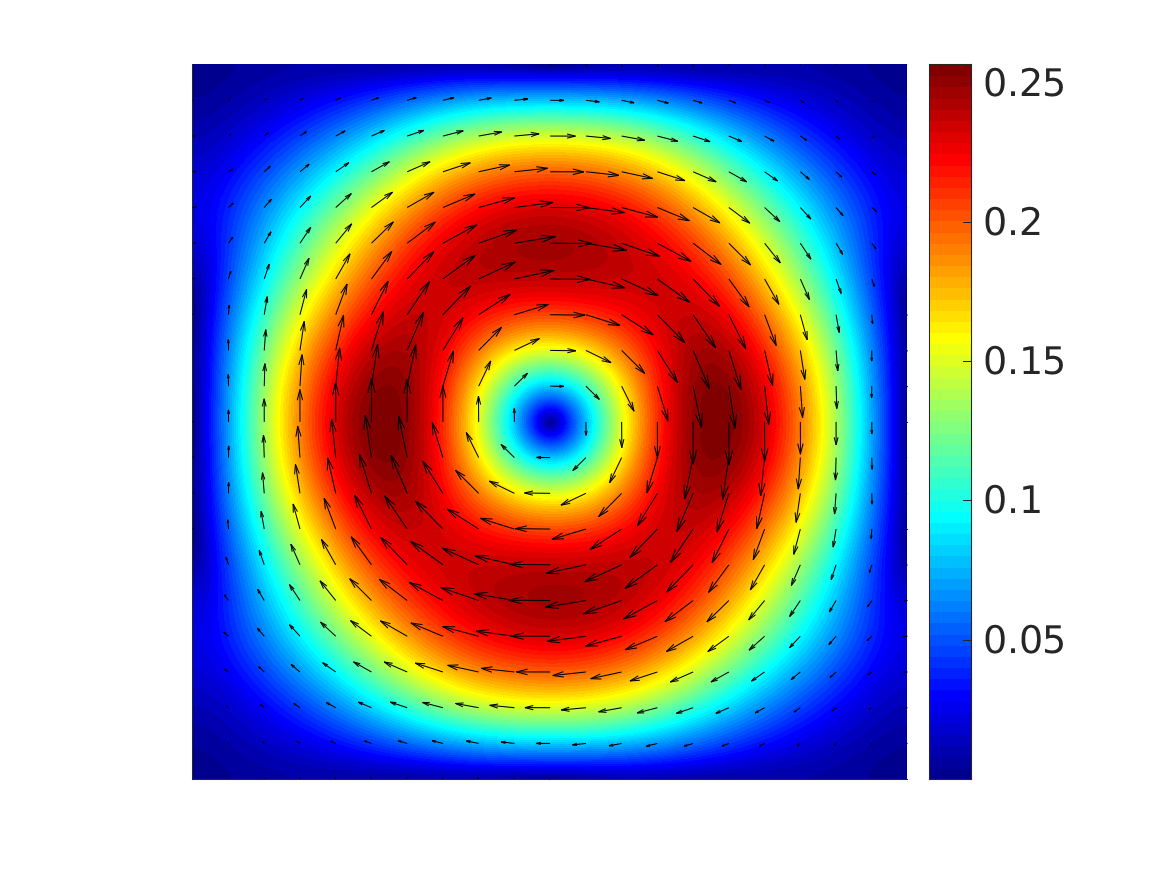}  
    &
    \includegraphics[trim={3.3cm 1.4cm 1.5cm 0.0cm},clip,scale=0.32]{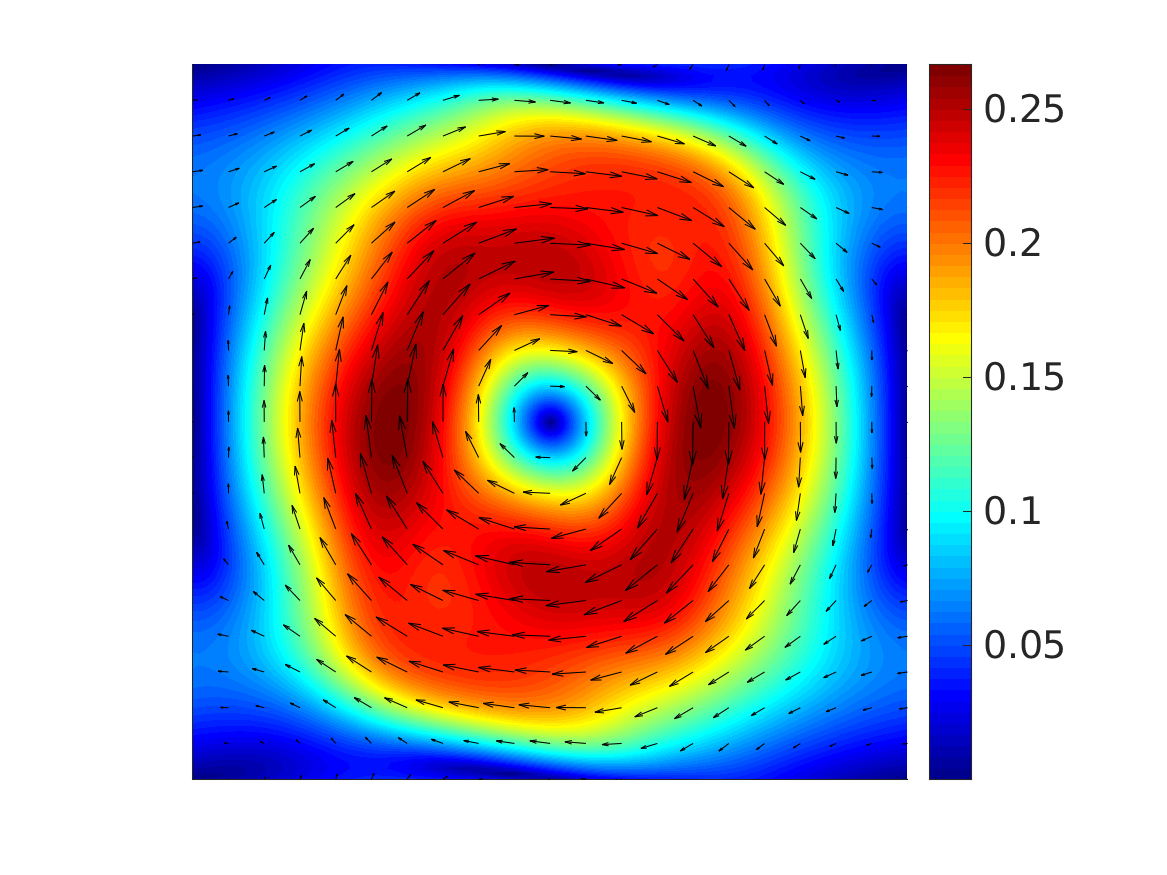} \\[-0.5em]
    \hspace{-1.5em}t=0 & \hspace{-1.5em}t=0.4 & \hspace{-1.5em}t=0.8 \\
    \includegraphics[trim={3.3cm 1.4cm 1.5cm 0.0cm},clip,scale=0.32]{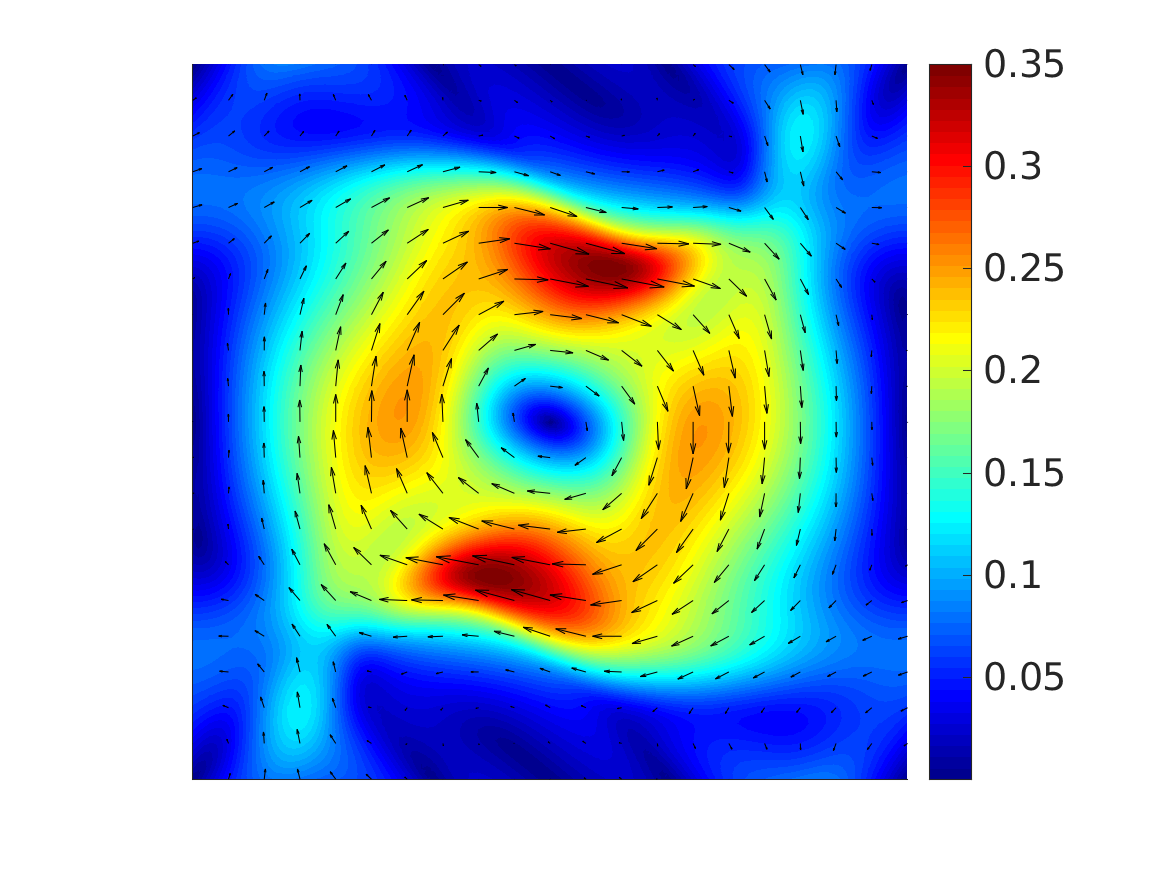}  
    &
    \includegraphics[trim={3.3cm 1.4cm 1.5cm 0.0cm},clip,scale=0.32]{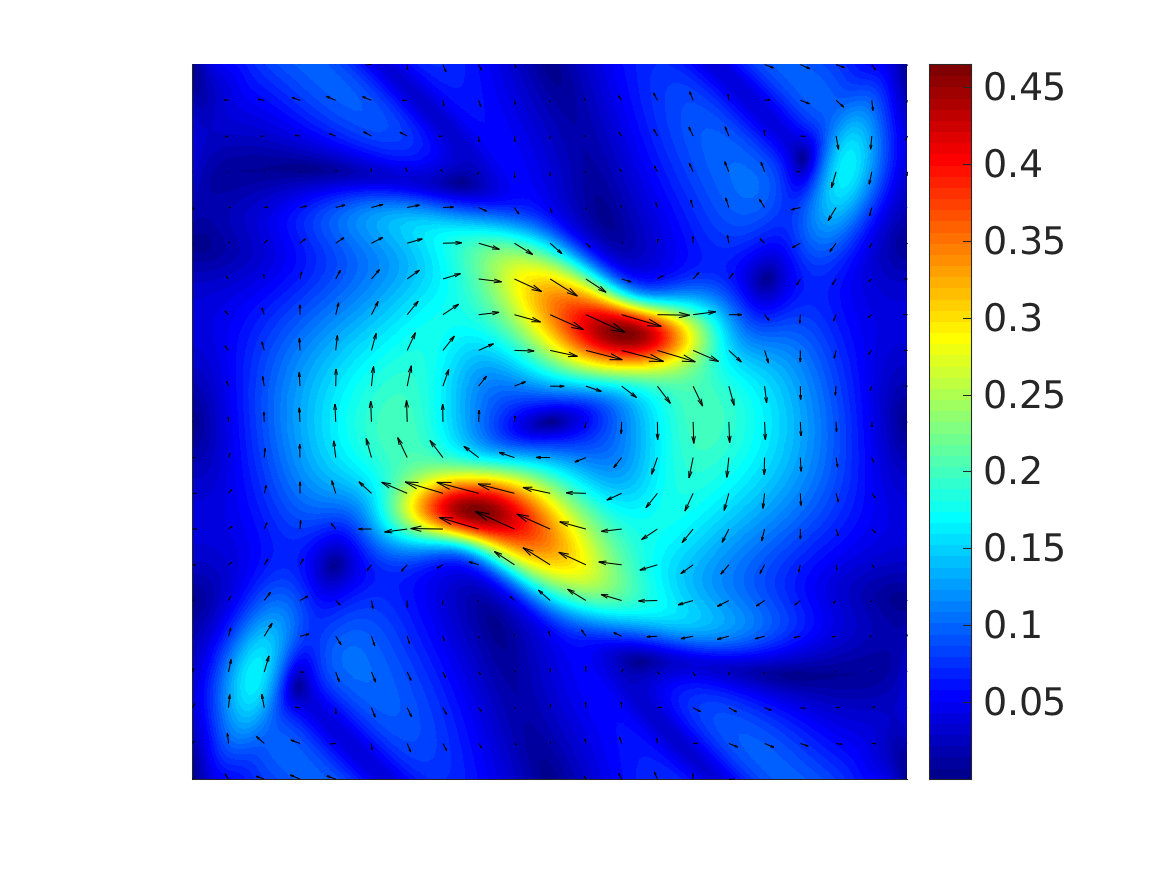} 
    &
    \includegraphics[trim={3.3cm 1.4cm 1.5cm 0.0cm},clip,scale=0.32]{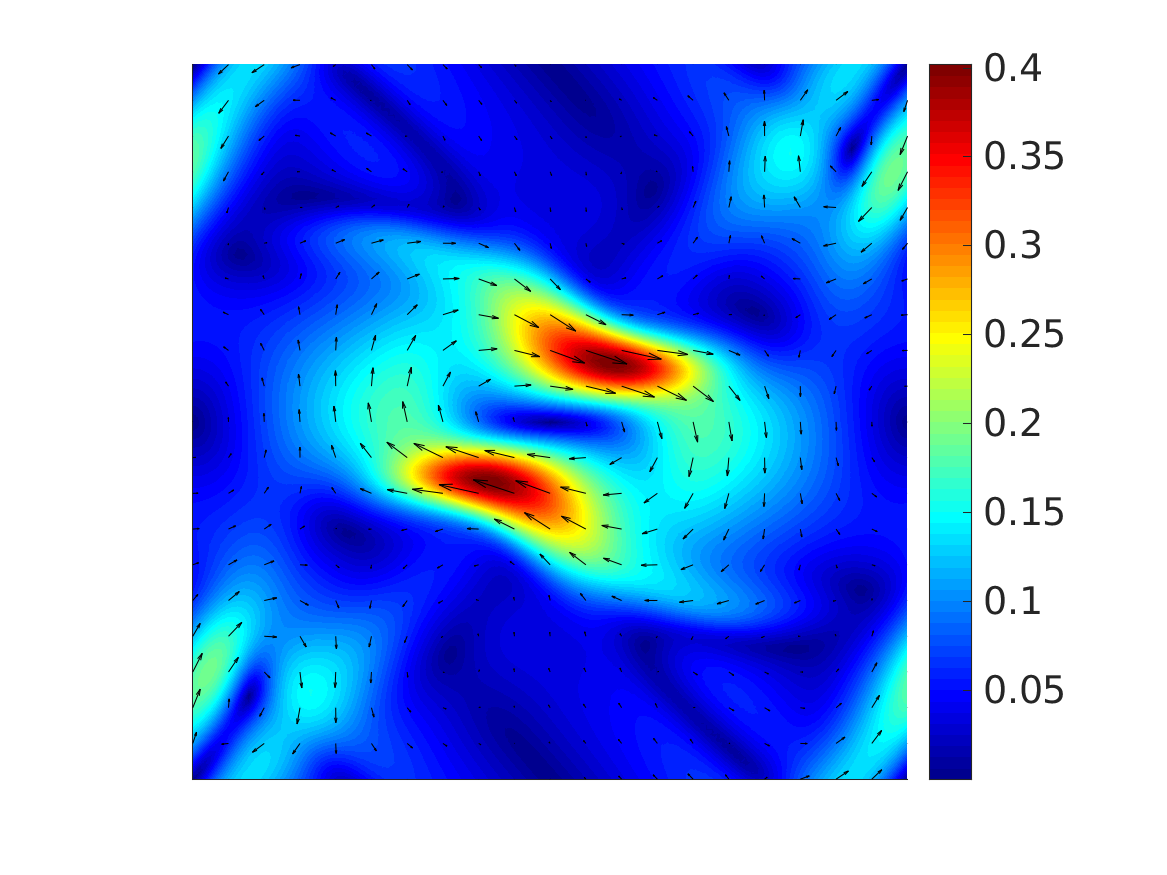} \\[-0.5em]
    \hspace{-1.5em}t=1.2 & \hspace{-1.5em}t=1.6 & \hspace{-1.5em}t=2 \\
\end{tabular}
    \caption{Snapshots of velocities $\snorm{\uu_{h,\tau}}^2$ for $h=2^{-7}$ and $\tau=0.25 \cdot 2^{-7}$.\label{fig:evouchns}}
\end{figure}
\begin{figure}[htbp!]
\centering
\footnotesize
\begin{tabular}{cc}
    \includegraphics[trim={0.4cm 0.4cm 1.5cm 0.0cm},clip,scale=0.41]{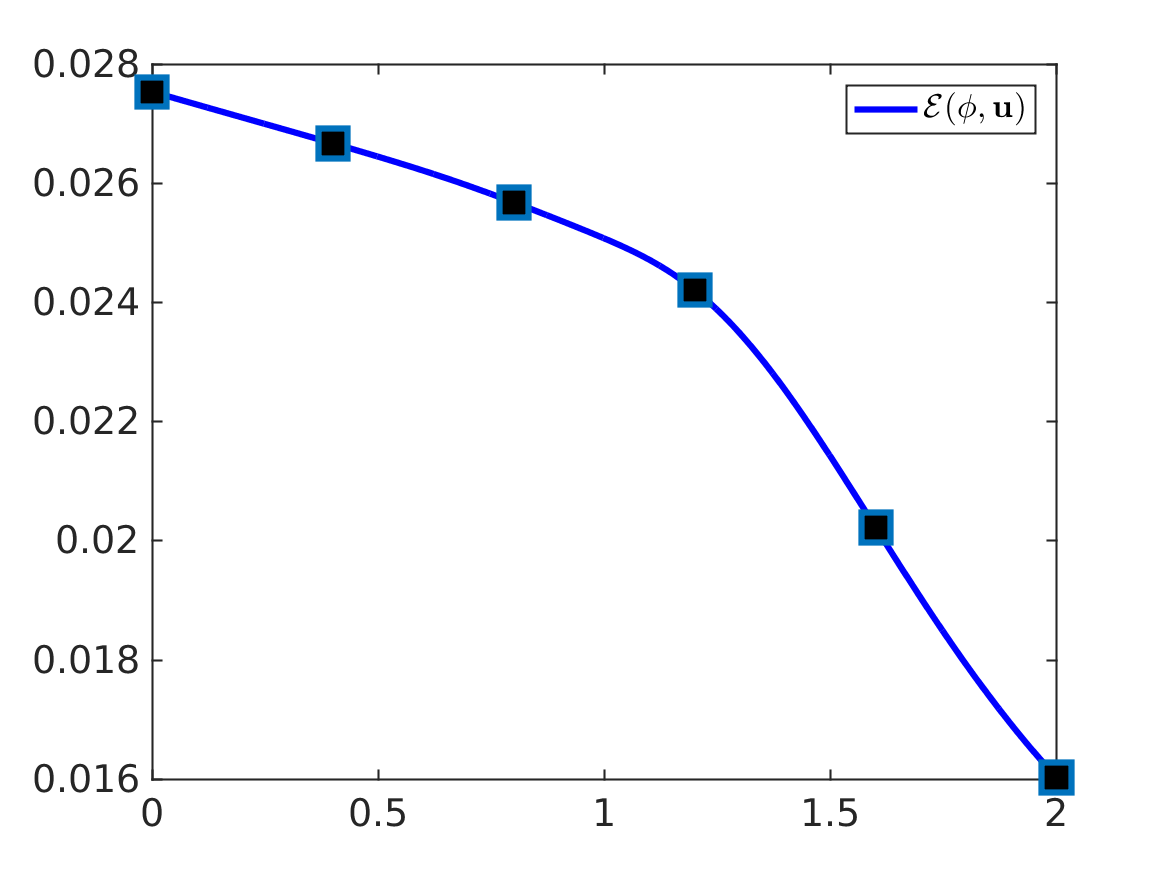} 
    &
    \includegraphics[trim={0.3cm 0.4cm 1.5cm 0.0cm},clip,scale=0.41]{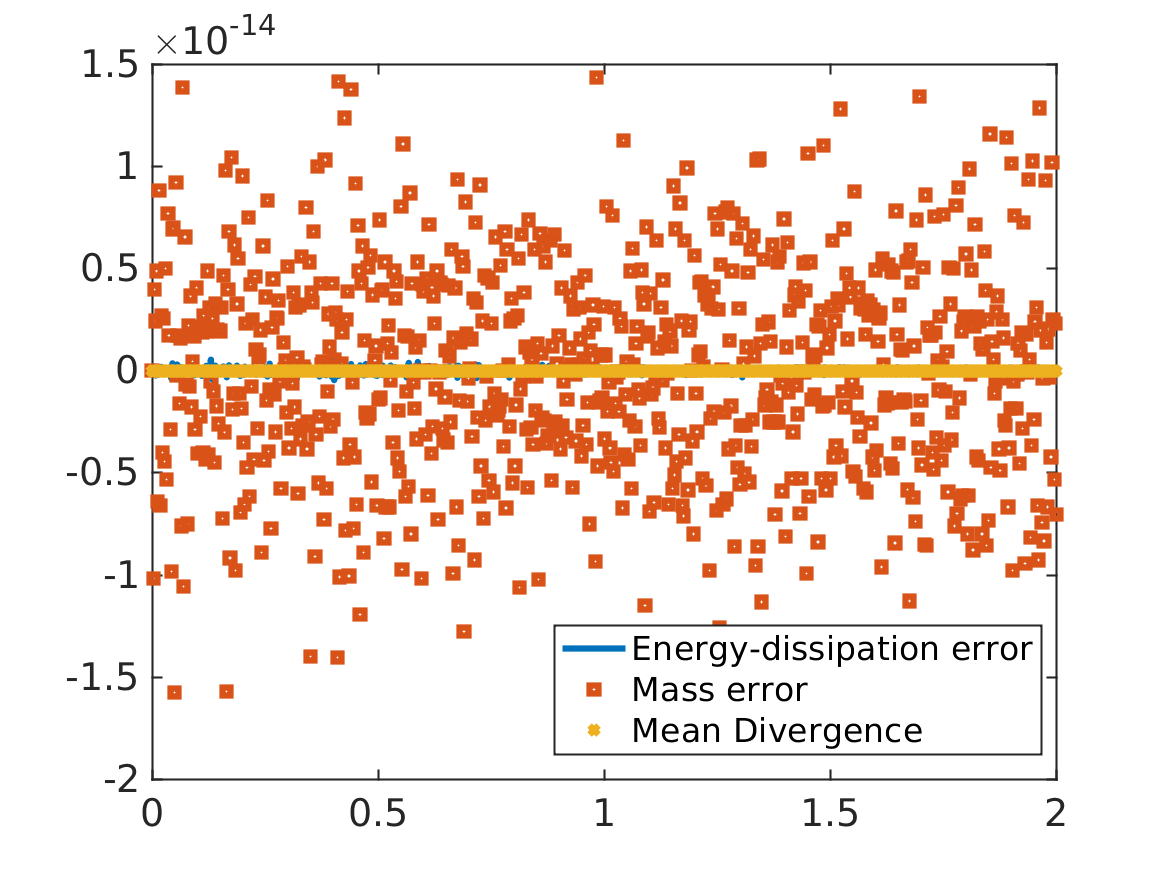}  
\end{tabular}
    \caption{Evolution of the energy $\E(\phi,\uu)$ (left) as well as energy-dissipation, mass conservation and mean divergence error (right) for simulations obtained with $h=2^{-7}$ and $\tau=0.25 \cdot 2^{-7}$.\label{fig:evochnsen}}
\end{figure} 
%
From these plots, one can deduce the following typical behaviour: 
At the beginning of the simulation, we observe a transition from the initial condition to an almost separated but connected distribution of the two phases.
At the same time, the rotating velocity starts to disperse, but remains  rather activate even until time $t=2$. 
We note that the energy of the system was monotonically decreasing over time in all our computations, while the total mass was preserved up to round-off accuracy.

\subsection*{Convergence results}

%
We now discuss the convergence rates observed in our computations. 
Since no analytical solution is available, the discretization error is estimated by comparing the computed solutions $(\phi_{h,\tau},\bar\mu_{h,\tau},\uu_{h,\tau},\bar p_{h,\tau})$ with those computed on uniformly refined grids.
The error quantities for the fully-discrete scheme, about which we report in the following, are defined as 
\begin{align*}
e_{h,\tau} &= \norm*{\phi_{h,\tau} - \phi_{h/2,\tau/2}}_{L^\infty(H^1)}^2 + \norm*{\uu_{h,\tau} - \uu_{h/2,\tau/2}}_{L^\infty(L^2)}^2 \\
&\qquad\quad+ \norm*{\bmu_{h,\tau} - \bmu_{h/2,\tau/2}}_{L^2(H^1)}^2 + \norm*{\bu_{h,\tau} - \bu_{h/2,\tau/2}}_{L^2(H^1)}^2 \\
e_{h,\tau}^p &=\norm*{\bar p_{h,\tau} - \bar p_{h/2,\tau/2}}_{L^2(L^2)}^2.
\end{align*}
%
%
In Table~\ref{tab:rates_time_chns}, we report about the results of our computations obtained on a sequence of uniformly refined meshes with mesh size $h_k=2^{-(3+k)}$, $k=0,\ldots,4$, and time steps $\tau_k = 0.025\cdot h_k$. 
As predicted by Theorem~\ref{thm:fulldisk}, we observe convergence of fourth order for the squared norms of the errors in all solution components. Let us note that these rates are optimal with respect to the approximation properties of the finite element spaces.
\begin{table}[htbp!]
\centering
\small
\caption{Errors and experimental orders of convergence for $h\approx \tau\approx 2^{-k}$. \label{tab:rates_time_chns}} 
\begin{tabular}{c||c|c||c|c}
$ k $ & $ e_{h,\tau} $  &  eoc & $e^p_{h,\tau}$ & eoc  \\
\hline
$ 0 $   & $5.748 \cdot 10^{-0}$  &    ---  & $4.863\cdot 10^{-4}$ &  ---\\
$ 1 $   & $1.523 \cdot 10^{-0}$  & 1.916   & $1.442\cdot 10^{-4}$ & 1.753 \\
$ 2 $   & $1.637 \cdot 10^{-1}$  & 3.217   & $1.586\cdot 10^{-5}$ & 3.184\\
$ 3 $   & $1.074 \cdot 10^{-2}$  & 3.929   & $5.544\cdot 10^{-7}$ & 4.839 
\end{tabular}
\end{table}
Following the suggestions in \cite{Kovacs19}, we also conducted convergence studies with respect to $\tau$ and $h$ separately, i.e., by fixing small $\tau$ and varying $h$, and vice versa. As predicted by Theorem~\ref{thm:fulldisk}, these tests lead to the same convergence rates in both discretization parameters.

\subsection*{Concluding remarks}
In summary, the proposed numerical scheme shows second order convergence in space and time and in all variables.

\section{Discussion} \label{sec:8}

In this paper, we proposed a structure-preserving variational discretization scheme for the Cahn-Hilliard Navier-Stokes system with concentration dependent mobility and viscosity, based on a mixed finite-element approximation in space and a Petrov-Galerkin discretization in time. 
The chosen discretization spaces are fully balanced and allow us to prove second order  convergence in all variables. 
The required regularity assumptions on the solution are weaker than in previous work, but still not minimal in view of the predicted convergence rates. To some extent, this can be explained by the lack of strong stability of Petrov-Galerkin time discretization; see \cite{AndreevSchweitzer2014} for details. 
At the core of our error analysis is a discrete stability estimate for the discrete problem, which was established by means of relative energy estimates. These naturally account for the nonlinearities in the energy of the problem and significantly simplified the error analysis.
Let us note that a continuous version of the respective stability estimate allows to establish stability and weak-strong uniqueness also for the continuous problem~\cite{brunk}.
The proposed variational time discretization approach, which is strongly related to the average vector field method \cite{Gonzales96,McLachlanEtAl99}, leads to a fully-implicit time stepping scheme which, however, can be solved rather efficiently. 
Typically 2--3 Newton iterations were sufficient to retain the full convergence order in our numerical tests. 
%

In this paper, a fully balanced second order approximation was considered in detail. The proposed variational discretization approach as well as its analysis, however, generalize quite naturally to higher order. 
The presented discrete stability analysis can further be used to simplify the derivation of error estimates for related, e.g., linearly implicit, time discretization schemes. This is accomplished by interpreting the discrete solution of a related scheme as perturbed solution of the proposed discretization scheme and estimating the resulting redicuals either a-priori or a-posteriori.
Detailed investigations in this direction as well as the extension to more complex multiphase problems, like the density dependent Cahn-Hilliard Navier-Stokes equations \cite{AGG} or non-isothermal phase-field models \cite{Penrose1990}, will be topics of future research. 

{\footnotesize

\section*{Acknowledgement}
Support by the German Science Foundation (DFG) via TRR~146 (project~C3) and SPP~2256 (project Eg-331/2-1) is gratefully acknowledged. M.L.\ is grateful to the Gutenberg Research College, University Mainz for supporting her research.}

\bibliographystyle{abbrv}
\bibliography{chns}

\newpage 


\appendix

\section*{Appendix}

For completeness, we now provide detailed proofs for some of the technical results that were used in the error analysis of the previous sections.

\section{Projection error estimates and Gronwall lemma} 
\label{app:proj}

In the following, we summarize some well-known results about standard projection and interpolation operators, which are used in our analysis. 

\subsection{Space discretization}
We consider the setting of Sections~\ref{sec:prelim} and \ref{sec:main} and, in particular, assume (A5)--(A6) to hold true.
%
%
The following results then follow with standard arguments; see e.g. \cite{BrennerScott}.
The $L^2$-orthogonal projection $\pi_h^0 : L^2(\Omega) \to \Vh$,
satisfies
%
\begin{align} \label{eq:l2projest}
    \|u - \pi_h^0 u\|_{H^s} \leq C h^{r-s} \|u\|_{H^r} \qquad \forall u \in H^r(\Omega),
\end{align}
and all parameters $-1 \le s \le r$ and $0 \le r \le 4$. 
On quasi-uniform meshes $\Th$, which we consider here, the projection $\pi_h^0$ is also stable with respect to the $H^1$-norm, i.e., 
\begin{align} \label{eq:h1stab_l2proj}
\|\pi_h^0 u\|_{H^1} \le C \|u\|_{H^1} \qquad \forall u \in H^1(\Omega).
\end{align}
%
For the $L^2$-projection $\tilde \pi^0_{h}:L^2_0(\Omega)\to \Qh$, 
one similarly has
\begin{align}
\label{eq:pressure_proj_est}
    \|q-\tilde \pi^0_{h} q\|_{L^2} \leq Ch^2\|q\|_{H^2} \qquad \forall q \in H^2(\Omega).
\end{align}
The $H^1$-elliptic projection $\pi_h^1 : H^1(\Omega) \to \Vh$, defined in  \eqref{eq:defh1proj}, satisfies
\begin{align}\label{eq:h1porjest}
 \|u - \pi_h^1 u\|_{H^s} \leq C h^{r-s} \|u\|_{H^r} \qquad \forall u \in H^r(\Omega),
\end{align}
for all parameters $-1 \le s \le r$ and $1 \le r \le 3$. 
Since we assumed quasi-uniformity of the mesh $\Th$, we can further resort to the inverse inequalities
\begin{align} \label{eq:inverse}
    \|v_h\|_{H^1} \le c_{inv} h^{-1} \|v_h\|_{L^2} 
    \qquad \text{and} \qquad 
    \|v_h\|_{L^p} \le c_{inv} h^{d/p-d/q} \|v_h\|_{L^q} 
\end{align}
which hold for all discrete functions $v_h \in \Vh$ and all $1 \le q \le p \le \infty$. 
These results also hold for vector valued functions.
%
%
%
%
The modified Stokes projector $\mathbf{P}^1_h: H^1(\Omega)^d \to \Zh$, defined in  \eqref{eq:defnmodstokes}, satisfies the error estimates
\begin{align}\label{eq:stokesproj}
    \|\uu-\mathbf{P}_h^1\uu\|_{H^s} \leq Ch^{r-s}\|\uu\|_{H^r} \qquad \forall \uu \in H^r(\Omega)^d
\end{align}
with $-1 \le s \le 1$ and $1 \le r \le 3$,
which follows by minor adaption of the results in~\cite{Guillngonzlez2012}. 

\subsection{Time discretization}
%
The piecewise linear interpolation 
$\I_\tau^1:H^1(0,T)\to P_1^c(\Itau)$ and the piecewise constant projection $\bar \pi_\tau^0 : L^2(0,T) \to P_0(\I_\tau)$ in time satify
\begin{align} 
    \|u - \bar\pi_\tau^0 u\|_{L^p(0,T)} &\le C \tau^{1/p-1/q+r} \|u\|_{W^{r,q}(0,T)} \quad && \forall u \in W^{r,q}(0,T), \label{eq:timprojest}\\
    \|u - \I_\tau^1 u\|_{L^p(0,T)} &\leq C\tau^{1/p-1/q+2} \|u\|_{W^{r,q}(0,T)} \quad && \forall u \in W^{s,q}(0,T) \label{eq:timinterpest}
\end{align}
with $1 \le p \le q  \le \infty$ and for $0 \le r \le 1$ respectively $1 \le s \le 2$.
Moreover, these operator commute with differentiation in the sense that
\begin{align} \label{eq:commuting} 
\dt (\I_\tau^1 u) = \bar \pi_\tau^0 (\dt u).
\end{align}

\subsection{Projection estimates for nonlinear terms}

We now derive an estimate for the projection error of products of functions in time. 
Let $J=(a,b)$ be an open interval, and denote by $\pi_k : L^2(J) \to P_k(J)$ the $L^2$-orthogonal projection and by $I_k : H^1(J) \to P_k(J)$ the Lagrange interpolation operator with respect to the Gau{\ss}-Legendre quadrature points, i.e., the zeros of the $(k+1)$st Legendre Polynomial $\ell_{k+1}(t)$, which is scaled to the respective interval.
We start with an auxiliary result.
\begin{lemma} \label{lem:super}
Let $a \in W^{j+2,p}(J)$ with $j \le k$ and $1 \le p \le \infty$. Then 
\begin{align} \label{eq:super}
    \|\pi_k a  - I_k a\|_{L^p(J)} \le C |J|^{j+2} \|a\|_{W^{j+2,p}(J)}, \qquad j \le k.
\end{align}
Here $|J| = b-a$ and $C$ depends only on the polynomial degree $k$.
\end{lemma}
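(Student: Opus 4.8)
The plan is to exploit a superconvergence cancellation between the two projectors and then to reduce everything to a fixed reference interval by an affine scaling argument. The central observation, which I would establish first, is that although $\pi_k$ and $I_k$ are in general different operators onto $P_k(J)$, they \emph{coincide} on the larger space $P_{k+1}(J)$. Indeed, writing any $q \in P_{k+1}(J)$ as $q = q_k + c\,\ell_{k+1}$ with $q_k \in P_k(J)$, the $L^2$-orthogonality of $\ell_{k+1}$ to $P_k(J)$ gives $\pi_k q = q_k$, while the defining property of the Gau\ss--Legendre nodes---they are precisely the zeros of $\ell_{k+1}$---yields $I_k \ell_{k+1} = 0$ and hence $I_k q = q_k$ as well. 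Consequently $\pi_k q = I_k q$ for every $q \in P_{k+1}(J)$, and this extra order over the naive $P_k$-reproduction is exactly what produces the gain in the final estimate.

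Since $j \le k$ implies $P_{j+1}(J) \subseteq P_{k+1}(J)$, the above gives the error representation
\[
\pi_k a - I_k a = (\pi_k - I_k)(a - q) \qquad \text{for every } q \in P_{j+1}(J),
\]
so it remains to choose $q$ as a good polynomial approximation of $a$ and to control both operators. For $\pi_k$ I would use $L^p$-stability: since $\pi_k$ commutes with affine changes of variables and $P_k$ is finite dimensional, $\|\pi_k g\|_{L^p(J)} \le C \|g\|_{L^p(J)}$ with a scale-independent constant. For $I_k$ I would expand $I_k g = \sum_i g(x_i) L_i$ in the Lagrange basis and estimate $\|I_k g\|_{L^p(J)} \le C|J|^{1/p}\|g\|_{L^\infty(J)}$ through the (fixed, $k$-dependent) Lebesgue constant, bounding $\|g\|_{L^\infty(J)}$ by the one-dimensional embedding $W^{1,p}(J) \hookrightarrow L^\infty(J)$.

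The quantitative rate then follows by transplanting to $\hat J = (-1,1)$. Under the affine map $F:\hat J \to J$ with scaling $h = |J|/2$, both operators commute with the pull-back, since Gau\ss{} points map to Gau\ss{} points and Legendre orthogonality is preserved. On $\hat J$ the representation above together with the two stability bounds yields
\[
\|\pi_k \hat a - I_k \hat a\|_{L^p(\hat J)} \le C \inf_{\hat q \in P_{j+1}(\hat J)} \|\hat a - \hat q\|_{W^{1,p}(\hat J)} \le C\,|\hat a|_{W^{j+2,p}(\hat J)},
\]
where the last step is the Deny--Lions/Bramble--Hilbert lemma for $P_{j+1}$-approximation of $W^{j+2,p}$ functions. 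Tracking the scaling relations $\|\hat g\|_{L^p(\hat J)} = h^{-1/p}\|g\|_{L^p(J)}$ and $|\hat a|_{W^{j+2,p}(\hat J)} = h^{\,j+2-1/p}|a|_{W^{j+2,p}(J)}$ cancels the factor $h^{-1/p}$ and leaves the power $h^{\,j+2} \simeq |J|^{j+2}$, which gives the claim after bounding the seminorm by the full norm.

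The step I expect to demand the most care is the agreement $\pi_k = I_k$ on $P_{k+1}(J)$, as the entire order improvement rests on it and it must be justified from the precise identification of the interpolation nodes as the roots of $\ell_{k+1}$. A secondary technical point is the $L^p$-boundedness of $I_k$ at the endpoint $p=1$, where the mixed-norm estimate through $L^\infty$ and the one-dimensional Sobolev embedding is required; both are harmless for fixed $k$ but should be stated explicitly to keep all constants independent of $|J|$.
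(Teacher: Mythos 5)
Your proof is correct and follows essentially the same route as the paper: you establish that $\pi_k$ and $I_k$ agree on $P_{k+1}(J)$ via Legendre orthogonality and the choice of Gau{\ss} nodes, and then conclude with the Bramble--Hilbert (Deny--Lions) lemma and the affine scaling argument, which is exactly the paper's ``mapping trick''. Your write-up merely makes explicit the details the paper leaves implicit (the error representation through $P_{j+1}$, the $L^p$-stability of both operators including the $p=1$ endpoint, and the scaling exponents), and all of these are handled correctly.
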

\noindent 
The result is well-known, but for completeness, we include a short proof.
\begin{proof}
Since $\pi_k a = a = I_k a$ for $a \in P_k$, we have $\pi_k a - I_k a = 0$ for $a \in P_k(J)$. 
Next consider $a = \ell_{k+1}$. By orthogonality of the Legendre polynomials, we have $\pi_k \ell_{k+1} = 0$ and due to the choice of interpolation points, also $I_k\ell_{k+1} = 0$. 
Due to the linearity of the projection and interpolation operator, we thus have $\pi_k a = I_k a$ for all $a \in P_{k+1}(J)$. 
The result then follows from the Bramble-Hilbert lemma and the mapping trick; see e.g. \cite{BrennerScott}.
\end{proof}

We can now establish the following projection error estimate.
\begin{lemma} \label{lem:average_time_err}
Let $\bar a = \pi_k a$ denote the $L^2$-orthogonal projection onto $P_{0}(\Itau)$.  
Then for any $u,v \in W^{2,p}(0,T)$ with $1 \le p \le \infty$, one has 
\begin{align}
\|\overline{\bar u\bar v}-\overline{uv}\|_{L^p(0,T)} 
&\le C \tau^{2} \|u\|_{W^{2,p}(0,T)} \|v\|_{W^{2,p}(0,T)},  \label{eq:midpoint_order}
\end{align}
with a constant $C$ depending only on the polynomial degree $k$. 
\end{lemma}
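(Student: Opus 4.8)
The plan is to reduce the estimate for the product $\overline{\bar u\bar v}-\overline{uv}$ to the single-function projection bound of Lemma~\ref{lem:super}, applied on each subinterval $J=(t^{n-1},t^n)$ of the time grid with $k=0$, so that $\pi_0$ is the mean-value (piecewise constant) projection and $I_0$ is evaluation at the midpoint (the single Gauss--Legendre point of order zero). The key observation to exploit is that $\pi_0$ reproduces the midpoint rule exactly on linear functions, i.e. $\pi_0 a = I_0 a$ whenever $a\in P_1(J)$, which is precisely the statement $\|\pi_0 a - I_0 a\|_{L^p(J)}\le C|J|^2\|a\|_{W^{2,p}(J)}$ specialized to $j=0,\,k=0$.

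First I would write the product error as a telescoping sum. On a fixed interval $J$ denote by $m_u = I_0 u$ and $m_v = I_0 v$ the midpoint values (constants on $J$), and by $\bar u = \pi_0 u$, $\bar v = \pi_0 v$ the averages. The cleanest route is to insert the midpoint product $m_u m_v$ and split
\begin{align*}
\overline{\bar u \bar v} - \overline{uv}
= \big(\overline{\bar u\bar v} - m_u m_v\big) + \big(m_u m_v - \overline{uv}\big).
\end{align*}
For the second bracket, since $m_u m_v$ is the midpoint evaluation of the product $uv$ and $\overline{uv}=\pi_0(uv)$ is its average, Lemma~\ref{lem:super} with $a=uv$ gives directly $\|m_u m_v - \overline{uv}\|_{L^p(J)}\le C|J|^2\|uv\|_{W^{2,p}(J)}$, and the product rule together with Sobolev embedding $W^{2,p}\hookrightarrow W^{1,\infty}$ in one dimension bounds $\|uv\|_{W^{2,p}(J)}$ by $C\|u\|_{W^{2,p}(J)}\|v\|_{W^{2,p}(J)}$. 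For the first bracket I would further split $\overline{\bar u\bar v}-m_u m_v = \pi_0(\bar u\bar v) - m_u m_v$ and use that $\bar u,\bar v$ are constants on $J$, so $\pi_0(\bar u\bar v)=\bar u\bar v$ and $m_u m_v$ is also constant; hence this bracket equals $\bar u\bar v - m_u m_v = (\bar u - m_u)\bar v + m_u(\bar v - m_v)$, and each factor difference is controlled by Lemma~\ref{lem:super} applied to $u$ and to $v$ separately (giving $\|\bar u - m_u\|_{L^\infty(J)}\le C|J|^2\|u\|_{W^{2,\infty}(J)}$, etc.), while $\|\bar v\|_{L^\infty(J)},\|m_u\|_{L^\infty(J)}$ are bounded by the $L^\infty$ norms of $v,u$.

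After establishing the per-interval bound $\|\overline{\bar u\bar v}-\overline{uv}\|_{L^p(J)}\le C|J|^2\|u\|_{W^{2,p}(J)}\|v\|_{W^{2,p}(J)}$ with $|J|=\tau$, I would sum the $p$-th powers over $n=1,\dots,N$ and take the $p$-th root, using $\tau=T/N$ to convert the local $\tau^2$ factors into the global $\tau^2$ prefactor and the (discrete) Hölder/summation argument to assemble the global $W^{2,p}(0,T)$ norms of $u$ and $v$. The main obstacle I anticipate is bookkeeping the mixed $L^p$ versus $L^\infty$ norms cleanly when bounding products: the natural local estimates produce one factor in $L^\infty(J)$ and one in $L^p(J)$, so I must either invoke the one-dimensional embedding $W^{1,p}(J)\hookrightarrow L^\infty(J)$ with scaling-correct constants or argue directly that on a single interval all these norms are comparable up to $|J|$-dependent factors that are absorbed into the claimed $\tau^2$. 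Controlling the constants to be $\tau$-independent (i.e. verifying that the embedding and product constants scale correctly under the affine map to the reference interval) is the delicate point; everything else is routine application of Lemma~\ref{lem:super}.
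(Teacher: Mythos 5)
Your proposal is correct and follows essentially the paper's own route: telescope through the Gauss-point (midpoint) interpolant, control each difference with Lemma~\ref{lem:super} applied to $u$, $v$, and the product $uv$, use the exactness $I_0(uv)=m_u m_v$, and conclude by summing $p$th powers while resolving the mixed $L^p$/$L^\infty$ bookkeeping exactly as the paper does, namely by keeping the local bounds in the mixed form $\|u\|_{W^{2,p}(J)}\|v\|_{W^{1,\infty}(J)}$ and invoking the global one-dimensional embedding $\|v\|_{W^{1,\infty}(0,T)}\le C\|v\|_{W^{2,p}(0,T)}$ only after summation (your alternative local bound via $\|u\|_{W^{2,\infty}(J)}$ would not sum under the stated hypotheses, as you yourself flag). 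The only difference is cosmetic: the paper writes a five-term splitting valid for general polynomial degree $k$, and for $k=0$ its middle terms vanish (on each interval $\overline{\tilde u\tilde v}=\widetilde{\tilde u\tilde v}=\tilde u\tilde v$), collapsing precisely to your two-bracket decomposition.
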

\noindent 
\begin{proof}
As a first step, we show that for every $J=(t^{n-1},t^n)$ and $j \le k$, one can bound
\begin{align} \label{eq:est1}
\|\overline{\bar u \bar v} - \overline{uv}\|_{L^p} \le C \tau^{j+2} (\|u\|_{W^{j+2,p}} \|v\|_{W^{j+1,\infty}} + \|u\|_{W^{j+1,\infty}} \|v\|_{W^{j+2,p}})
\end{align}
where $\bar a = \pi_k a$ is the $L^2$ projection and $\|\cdot\|_{W^{k,p}}$ refers to the norm on this element $J$. 
We further write $\tilde a = I_k a$ for the interpolation and observe that 
\begin{align*}
\|\overline{\bar u \bar v} - \overline{uv}\|_{L^p} 
   &\le \|\overline{\bar u \bar v} - \overline{\tilde u \bar v}\|_{L^p}
 + \|\overline{\tilde u \bar v} - \overline{\tilde u \tilde v}\|_{L^p}
      + \|\overline{\tilde u \tilde v} - \widetilde{\tilde u \tilde v}\|_{L^p} \\
&\qquad 
+ \|\widetilde{\tilde u \tilde v} - \widetilde{u v}\|_{L^p}
      + \|\widetilde{u v} - \overline{u v}\|_{L^p}  
= (i) + (ii) + (iii) + (iv) + (v).      
\end{align*}
We can now use Lemma~\ref{lem:super} to estimate the individual terms. 
For ease of notation, we write $a \lesssim b$ for $a \le C b$ in the following, where $C$ may depend on the polynomial degree $k$.
For the first term, we have
\begin{align*}
(i) \lesssim  \|\bar u \bar v - \tilde u \bar v\|_{L^p} 
\le \|\bar u - \tilde u\|_{L^p} \|\bar v\|_{L^\infty} \lesssim \tau^{j+2} \|u\|_{W^{j+2,p}} \|\bar v\|_{L^\infty}.
\end{align*}
In a similar manner, we can estimate the second term by 
$(ii) \lesssim \tau^{j+2} \|u\|_{L^\infty} \|v\|_{W^{j+2,p}}$.
By Lemma~\ref{lem:super}, we can estimate the third term by 
\begin{align*}
(iii) \lesssim \tau^{j+2} \|\tilde u \tilde v\|_{W^{j+2,p}} 
\lesssim \tau^{j+2} (\|u\|_{W^{j+2,p}} \|v \|_{W^{j+1,\infty}} + \|u\|_{W^{j+1,\infty}} \|v \|_{W^{j+2,p}}),  
\end{align*}
which could still be refined a bit concerning the regularity requirements. 
The fourth term vanishes identically, i.e, $(iv)=0$, and the last is simply estimated by 
\begin{align*}
(v) \lesssim \tau^{j+2} \|uv\|_{W^{j+2^,p}} \lesssim \tau^{j+2} (\|u\|_{W^{j+2,p}} \|v \|_{W^{j+1,\infty}} + \|u\|_{W^{j+1,\infty}} \|v \|_{W^{j+2,p}}),  
\end{align*}
where we used Lemma~\ref{lem:super} once more. 
The assertion of Lemma~\ref{lem:average_time_err} now follows by summing up the $p$th power of the estimates \eqref{eq:est1} for the individual elements and noting that $\|a\|_{W^{j+1,\infty}(0,T)} \le C \|a\|_{W^{j+2,p}(0,T)}$ by continuous embedding. 
\end{proof} 

In a similar manner, we obtain the following estimate.
\begin{lemma}
Let $\bar a = \pi_0 a$ denote the $L^2$-orthogonal projection onto $P_{0}(\Itau)$. Furthermore, let $\phi\in P_1(\Itau)$. 
Then for any $u,v \in W^{2,p}(0,T)$ with $1 \le p \le \infty$, one has 
\begin{align}
\| g(\bar\phi)-\overline{g(\phi)}\|_{L^p(0,T)} 
&\le C \tau^{2} \|g(\phi)\|_{W^{2,p}(0,T)},  \label{eq:midpoint_order_single}
\end{align}
with a constant $C$ depending only on the polynomial degree $k$. 
\end{lemma}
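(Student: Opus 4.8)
The plan is to reduce the estimate for the single-function quantity $\|g(\bar\phi)-\overline{g(\phi)}\|_{L^p(0,T)}$ to the already-established product estimate of Lemma~\ref{lem:average_time_err}, or more directly to the superconvergence result of Lemma~\ref{lem:super}. The key observation is that we must compare two different objects: $g(\bar\phi)$, which is the value of $g$ evaluated at the piecewise-constant projection $\bar\phi=\pi_0\phi$, and $\overline{g(\phi)}=\pi_0\bigl(g(\phi)\bigr)$, the piecewise-constant projection of the already-composed function $g(\phi)$. Since $\phi\in P_1(\Itau)$ is piecewise linear in time, on each element $J=(t^{n-1},t^n)$ the projection $\bar\phi$ is simply the average of $\phi$ over $J$, i.e.\ its value at the midpoint $t^{n-1/2}$, so $g(\bar\phi)$ is a constant on $J$ while $\overline{g(\phi)}$ is the element-average of the smooth function $g(\phi)$.

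First I would localise to a single element $J$ with $|J|=\tau$ and set $G:=g(\phi)\in W^{2,p}(J)$, using that the assumption $g(\phi)\in W^{2,p}(0,T)$ makes this legitimate. On $J$ the target becomes $\|g(\bar\phi)-\bar G\|_{L^p(J)}$ where $\bar G=\pi_0 G$ is the average. The natural strategy is to insert the midpoint interpolation: write $g(\bar\phi)-\bar G=(g(\bar\phi)-\tilde G)+(\tilde G-\bar G)$, where $\tilde G=I_0 G$ denotes the degree-$0$ Gauss-Legendre (i.e.\ midpoint) interpolant of $G$, which equals $G(t^{n-1/2})=g(\phi(t^{n-1/2}))=g(\bar\phi)$ by the midpoint identity just noted. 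Hence the first bracket vanishes identically, and we are left with $\|\tilde G-\bar G\|_{L^p(J)}=\|I_0 G-\pi_0 G\|_{L^p(J)}$, which is exactly the quantity controlled by Lemma~\ref{lem:super} with $k=0$, $j=0$.

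Invoking Lemma~\ref{lem:super} then gives $\|\pi_0 G-I_0 G\|_{L^p(J)}\le C|J|^{2}\|G\|_{W^{2,p}(J)}=C\tau^2\|g(\phi)\|_{W^{2,p}(J)}$ on each element. The final step is to raise this to the $p$th power, sum over all elements $J\subset(0,T)$, and take the $p$th root, which assembles the elementwise bounds into the global estimate $\|g(\bar\phi)-\overline{g(\phi)}\|_{L^p(0,T)}\le C\tau^2\|g(\phi)\|_{W^{2,p}(0,T)}$ (for $p=\infty$ one takes the maximum over elements instead). The main subtlety to verify carefully is the identification $g(\bar\phi)=I_0\bigl(g(\phi)\bigr)$ on each element: this relies precisely on the fact that $\phi$ is affine on $J$, so that its average coincides with its midpoint value, and that the midpoint is the single Gauss-Legendre node for $k=0$. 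Once this pointwise identity is secured, the estimate is a direct corollary of the superconvergence lemma and no genuinely new analysis is required; the only mild bookkeeping is tracking the $L^\infty$ versus $L^p$ cases in the final summation.
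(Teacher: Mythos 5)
Your proposal is correct and follows essentially the same route as the paper: the paper likewise inserts the midpoint value $\widetilde{g(\phi)}=g(\phi(t^{n-1/2}))$, uses that $\bar\phi$ coincides with the midpoint value of the affine function $\phi$ on each interval so that the first bracket $g(\bar\phi)-\widetilde{g(\phi)}$ vanishes identically, and bounds the remaining term $\|\widetilde{g(\phi)}-\overline{g(\phi)}\|_{L^p}$ by second order via the superconvergence estimate of Lemma~\ref{lem:super}. Your write-up is in fact slightly more explicit than the paper's, since you spell out the identification of the midpoint with the $k=0$ Gauss--Legendre node and the elementwise-to-global summation, both of which the paper leaves implicit.
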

\begin{proof}
We denote by $\tilde a=a(t^{n-1/2})$ and estimate
\begin{align*}
 \| g(\bar\phi)-\overline{g(\phi)}\|_{L^p} 
 \leq \| g(\bar\phi)-\widetilde{g(\phi)}\|_{L^p}  + \|\widetilde{ g(\phi)}-\overline{g(\phi)}\|_{L^p}    
\end{align*}
The first term can be written as
\begin{align*}
\| g(\bar\phi)-\widetilde{g(\phi)}\|_{L^p} 
&= \| g(\tilde\phi)-\widetilde{g(\phi)}\|_{L^p} \\
&= \|g(\phi(t^{n-1/2})) - g(\phi(t^{n-1/2}))\|_{L^p} =0,  
\end{align*}
and the second term is already of second order; see the proof of the previous lemma. 
\end{proof}

\subsection{Discrete Gronwall lemma}

In our stability analysis, we also employ the following well-known argument, whose proof follows immediately by induction.
\begin{lemma} \label{lem:discgronwall}
Let $(a^n)_n$, $(b_n)_n$, $(c_n)_n$, and $(\lambda_n)_n$ be given positive sequences, satisfying
\begin{align*}
    u_n + b_n \le e^{\lambda_n} u_{n-1} + c_n, \qquad n \ge 0.
\end{align*}
Then 
\begin{align} \label{eq:discgronwall}
    u_n + \sum_{k=1}^n e^{\sum_{j={k+1}}^{n} \lambda_j} b_k 
    \le e^{\sum_{j=1}^n \lambda_j} u_0 + \sum_{k=0}^n e^{\sum_{j={k+1}}^{n} \lambda_j} c_k, \quad n > 0. 
\end{align}
\end{lemma}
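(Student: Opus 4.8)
The plan is to prove the inequality \eqref{eq:discgronwall} by induction on $n$, exploiting the telescoping structure of the exponential weights $e^{\sum_{j=k+1}^n \lambda_j}$. Throughout I would adopt the convention that an empty sum is zero, so that $e^{\sum_{j=n+1}^n \lambda_j} = e^0 = 1$; this makes the summands at $k=n$ carry weight one and lets them match the bare boundary terms $b_n$ and $c_n$ appearing in the one-step hypothesis $u_n + b_n \le e^{\lambda_n} u_{n-1} + c_n$.

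For the base case $n=1$, the claimed left-hand side reduces to $u_1 + b_1$ and the claimed right-hand side to $e^{\lambda_1} u_0 + e^{\lambda_1} c_0 + c_1$. The assumed recursion gives $u_1 + b_1 \le e^{\lambda_1} u_0 + c_1$, which differs from the target only by the extra nonnegative term $e^{\lambda_1} c_0$ on the right; since all sequences are positive, the base case holds.

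For the inductive step, I would assume the bound at index $n-1$, multiply the whole inequality by $e^{\lambda_n} > 0$, and use the identity $e^{\lambda_n}\, e^{\sum_{j=k+1}^{n-1}\lambda_j} = e^{\sum_{j=k+1}^n \lambda_j}$ to promote every exponential weight from top index $n-1$ to $n$. This bounds $e^{\lambda_n} u_{n-1} + \sum_{k=1}^{n-1} e^{\sum_{j=k+1}^n \lambda_j} b_k$ by the target right-hand side, now with $u_0$ weighted by $e^{\sum_{j=1}^n \lambda_j}$ and the $c_k$-sum running only to $n-1$. Invoking the one-step hypothesis in the form $e^{\lambda_n} u_{n-1} \ge u_n + b_n - c_n$ to replace the leading term, and then absorbing $b_n$ and $c_n$ as the $k=n$ summands (which have weight one by the empty-sum convention), reassembles exactly the claimed inequality at index $n$.

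The only real obstacle is bookkeeping: one must track the exponential weights as the top index shifts from $n-1$ to $n$, and verify the empty-sum conventions so that the boundary terms $b_n$ and $c_n$ slot correctly into the weighted sums. There is no analytic difficulty, since positivity of the sequences guarantees every inequality points in the right direction and no sign cancellation can occur.
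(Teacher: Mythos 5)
Your induction is correct and is precisely the argument the paper has in mind: the lemma is stated there with the remark that the proof ``follows immediately by induction,'' and your telescoping of the exponential weights via $e^{\lambda_n} e^{\sum_{j=k+1}^{n-1}\lambda_j} = e^{\sum_{j=k+1}^{n}\lambda_j}$, together with the empty-sum convention for the $k=n$ terms, fills in exactly that induction. Your observation that the $k=0$ summand $e^{\sum_{j=1}^{n}\lambda_j} c_0$ is merely an extra nonnegative term on the right is also the correct way to reconcile the stated bound with what the recursion naturally produces.
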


\section{Proof of Lemma~\ref{lem:fullstab}}
\label{app:fullstab}
%
Under the assumptions (A0)--(A7), we now establish the discrete stability estimate
\begin{align*}
 \E_\alpha(\phi_{h,\tau},&\uu_{h,\tau}|\hat \phi_{h,\tau},\hat\uu_{h,\tau}) \, \Big|_{t^{n-1}}^{t^n} + \frac{3}{4} \int_{t^{n-1}}^{t^n} \D_{\bar \phi_{h,\tau}}(\bmu_{h,\tau}-\hbmu_{h,\tau},  \bu_{h,\tau}-\hbu_{h,\tau}) \, ds 
 \\
 & \leq \bar c  \int_{t^{n-1}}^{t^n} \E_\alpha(\phi_{h,\tau},\uu_{h,\tau}|\hat \phi_{h,\tau},\hat\uu_{h,\tau})\; ds \\ 
 & \qquad \qquad \qquad + \bar C \int_{t^{n-1}}^{t^{n}} \|\bar r_{1,h,\tau}\|_{H^{-1}}^2 + \|\bar r_{2,h,\tau}\|_{H^1}^2 + \|\bar \rr_{3,h,\tau}\|_{\Zh^*}^2 \, ds,
\end{align*}
Let us recall the definition $f(z|\hat z) = f(z) - f(\hat z) - \langle f'(\hat z), z - \hat z \rangle$, with $\langle a, b \rangle$ denoting the Euclidean scalar product. 
Then for a smooth function $z : \RR \to \RR^n$, $t \mapsto z(t)$, one has 
\begin{align} \label{eq:aux}
\frac{d}{dt} f(z|\hat z) 
&= \la f'(z), \dt z\ra - \la f'(\hat z), \dt \hat z\ra -  
   \la f'(\hat z), \dt z - \dt \hat z \ra - \la f''(\hat z) \dt \hat z, z - \hat z\ra \nonumber\\
&= \la f'(z) - f'(\hat z), \dt z - \dt \hat z \ra + 
   \la f'(z) - f'(\hat z) - f''(\hat z) (z-\hat z), \dt \hat z  \ra. 
\end{align}
This result follows immediately by the chain-rule of differentiation and some elementary manipulations.
To simplify notation, we further split the relative energy functional $\E_\alpha(\phi,\uu|\hat \phi,\hat \uu)$ into the two independent contributions $\E_\alpha^\phi(\phi|\hat \phi)=\E_\alpha(\phi,0|\hat \phi,0)$ and $\E_\alpha^\uu(\uu|\hat \uu)=\E_\alpha(0,\uu|0,\hat \uu)$. 
Then by the fundamental theorem of calculus, we can see that
\begin{align} \label{eq:relenest}
\E_\alpha(\phi_{h,\tau},&\uu_{h,\tau}|\hat\phi_{h,\tau},\hat\uu_{h,\tau}) \, \Big|_{t^{n-1}}^{t^n}
 = \int_{t^{n-1}}^{t^n} \frac{d}{dt}\E_\alpha(\phi_{h,\tau},\uu_{h,\tau}|\hat\phi_{h,\tau},\hat\uu_{h,\tau}) \, ds  \\
&= \int_{t^{n-1}}^{t^n} \frac{d}{dt}\E_\alpha^\phi(\phi_{h,\tau}|\hat\phi_{h,\tau}) \, ds + 
\int_{t^{n-1}}^{t^n}
\frac{d}{dt}\E_\alpha^\uu(\uu_{h,\tau}|\hat\uu_{h,\tau}) \,  ds
= (I) + (II). \notag
\end{align}
The two terms $(I)$ and $(II)$ can now be treated separately.

\subsection*{First term}
By applying the identity \eqref{eq:aux} to the integrand in the first term (i), and then employing the variational identities \eqref{eq:pg2} and \eqref{eq:disc_pert2}, we obtain
\begin{align*}
(I) &=  \gamma \la \nabla \phi_{h,\tau} - \nabla \hat \phi_{h,\tau}, \nabla \dt \phi_{h,\tau} - \nabla \dt \hat \phi_{h,\tau}\ra^n  +\la f'(\phi_{h,\tau}) - f'(\hat\phi_{h,\tau}), \dt\phi_{h,\tau} - \dt\hat\phi_{h,\tau} \ra^n \\
     & \qquad \qquad + \la f'(\phi_{h,\tau}) - f'(\hat \phi_{h,\tau}) - f''(\hat \phi_{h,\tau}) (\phi_{h,\tau} - \hat \phi_{h,\tau}), \dt \hat \phi_{h,\tau}\ra^n \\
     & \qquad \qquad + \alpha \la \phi_{h,\tau} - \hat \phi_{h,\tau}, \dt \phi_{h,\tau} - \dt \hat \phi_{h,\tau}\ra^n   
\\
    &= \la \bmu_{h,\tau} - \hbmu_{h,\tau} + \bar r_{2,h,\tau}, \dt \phi_{h,\tau} - \dt \hat \phi_{h,\tau}\ra^n 
   + \alpha \la \phi_{h,\tau}- \hat \phi_{h,\tau}, \dt \phi_{h,\tau} - \dt \hat \phi_{h,\tau}\ra^n \\ 
   &\qquad \qquad 
   + \la f'(\phi_{h,\tau}) - f'(\hat \phi_{h,\tau}) - f''(\hat \phi_{h,\tau}) (\phi_{h,\tau} - \hat \phi_{h,\tau}), \dt \hat \phi_{h,\tau}\ra^n.
\end{align*}
Here $\bar \xi_{h,\tau} = \dt \phi_{h,\tau} - \dt \hat \phi_{h,\tau}\in P_0(\Itau;\Vh)$ was used as an admissible test function in \eqref{eq:pg2} and \eqref{eq:disc_pert2}.
Since this function is piecewise constant in time, we can replace 
\begin{align}
\alpha \la \phi_{h,\tau}- \hat \phi_{h,\tau}, \dt \phi_{h,\tau} - \dt \hat \phi_{h,\tau}\ra^n
= 
\alpha \la \bar \phi_{h,\tau}- \hat{\bar\phi}_{h,\tau}, \dt\phi_{h,\tau} - \dt \hat \phi_{h,\tau}\ra^n. \label{eq:l2tovsnormal}
\end{align}
Recall that $\bar g = \bar\pi_\tau^0 g$ is used to abbreviate the projection onto piecewise constants in time.  
By employing $\bar \psi_{h,\tau} = \mu_{h,\tau} - \hat \mu_{h,\tau} + \bar r_{2,h,\tau} + \alpha \bar\pi_\tau^0 (\phi_{h,\tau} - \hat \phi_{h,\tau}) \in P_0(\Itau;\Vh)$ as a test function in the identities \eqref{eq:pg1} and \eqref{eq:disc_pert1}, we further obtain
\begin{align*}
(I) &= 
-\la b(\bar\phi_{h,\tau}) \nabla (\bmu_{h,\tau} - \hbmu_{h,\tau}), \nabla(\bmu_{h,\tau}-\hbmu_{h,\tau} + \bar r_{2,h,\tau})\ra^n 
- \la \bar r_{1,h,\tau}, \bmu_{h,\tau} - \hbmu_{h,\tau} + \bar r_{2,h,\tau} \ra^n 
\\
 &  \qquad \qquad
 +\la \phi_{h,\tau}(\bu_{h,\tau}-\hbu_{h,\tau}),\nabla(\bmu_{h,\tau}-\hbmu_{h,\tau}+\hat r_{2,h,\tau})\ra^n \\
& \qquad \qquad  
- \alpha \la b(\bar\phi_{h,\tau}) \nabla (\bmu_{h,\tau} - \hbmu_{h,\tau}), \nabla (\phi_{h,\tau} - \hat \phi_{h,\tau}) \ra^n 
- \alpha \la \bar r_{1,h,\tau}, \phi_{h,\tau} - \hat \phi_{h,\tau} \ra^n 
\\
 & \qquad \qquad  
+\alpha \la \phi_{h,\tau}(\bu_{h,\tau}-\hbu_{h,\tau}),\nabla(\phi_{h,\tau}-\hat\phi_{h,\tau})\ra^n \\
 & \qquad \qquad  
+ \la f'(\phi_{h,\tau}) - f'(\hat \phi_{h,\tau}) - f''(\hat \phi_{h,\tau})(\phi_{h,\tau} - \hat \phi_{h,\tau}), \dt \hat \phi_{h,\tau} \ra^n
\\
 &= (i) + (ii) + (iii) + (iv) + (v) + (vi) + (vii).
\end{align*}
Via Young's inequality, the first term can be estimated by
\begin{align*}
(i) 
& \leq  -(1-\tfrac{1}{8})   \int_{t^{n-1}}^{t^n} \D^\mu_{\bar \phi_{h,\tau}}(\bmu_{h,\tau}-\hbmu_{h,\tau})\, ds + C_1 \int_{t^{n-1}}^{t^n} \|\bar r_{2,h,\tau}\|_{H^1}^2 \, ds,
\end{align*}
where $\D^\mu_\phi(\mu)=\D_\phi(\mu,0)$ is the $\mu$-dependent part of the dissipation functional. 
%
By Cauchy-Schwarz and Young inequalities, the second term can be estimated by
\begin{align*}
(ii) &\leq 
\int_{t_{n-1}}^{t^n} (1+\tfrac{1}{4\delta}) \|\bar r_{1,h,\tau}\|_{H^{-1}}^2 + \|\bar r_{2,h,\tau}\|^2_{H^1} + \delta \|\bmu_{h,\tau} - \hbmu_{h,\tau}\|^2_{H^1} \,ds.
\end{align*}
By testing \eqref{eq:pg2} and \eqref{eq:disc_pert4} with $\bar \xi_{h,\tau}=1$, and noting that all functions are independent of time, we see that 
\begin{align}
\la \bmu_{h,\tau} - \hbmu_{h,\tau},1\ra
&= \gamma \la \nabla \bar \phi_{h,\tau} - \nabla \hbphi_{h,\tau}, \nabla 1\ra + \la \overline{f'(\phi_{h,\tau})} - \overline{f'(\hat \phi_{h,\tau})}, 1\ra + \la \bar r_{2,h,\tau},1\ra \notag\\
&\le C_2' \| \phi_{h,\tau} -  \hat \phi_{h,\tau}\|_{L^\infty(H^1)} +  C_\Omega\norm*{\bar r_{2,h,\tau}}_{L^2}, \label{eq:meanmuerr}
\end{align}
for almost every point in time, with  $C_2'$ depending on $\gamma$ and the uniform bounds for $\phi_{h,\tau}$ and $\hat \phi_{h,\tau}$ in the $L^\infty(H^1)$-norm. 
By a Poincar\'e inequality and \eqref{eq:meanmuerr} we can thus estimate 
\begin{align} \label{eq:muest}
\|\bmu_{h,\tau} - \hbmu_{h,\tau}\|_{H^1}^2 
&\le C_P (\|\nabla (\bmu_{h,\tau} - \hbmu_{h,\tau}) \|_{L^2}^2 + |\la \bmu_{h,\tau} - \hbmu_{h,\tau},1\ra|^2 \\
&\le C_P \|\nabla (\bmu_{h,\tau} - \hbmu_{h,\tau}) \|_{L^2}^2 + C_P' \|\phi_{h,\tau} - \hat \phi_{h,\tau}\|_{H^1}^2 +  C_\Omega\norm*{\bar r_{2,h,\tau}}_{L^2}^2. \notag
\end{align}
The two remaining terms can further be estimated by the dissipation and relative energy. 
After choosing $\delta$ sufficiently small, we finally obtain
\begin{align*}
(ii) &\le \frac{1}{8} \int_{t^{n-1}}^{t^n} \D^\mu_{\bar \phi_{h,\tau}}(\bmu_{h,\tau}-\hbmu_{h,\tau}) \,ds 
+ C_2 \int_{t^{n-1}}^{t^n} \|\bar r_{1,h,\tau}\|_{H^{-1}}^2 + \|\bar r_{2,h,\tau}\|_{H^1}^2 \,ds
 \\
& \qquad \qquad   + C_2' \int_{t^{n-1}}^{t^n}  \E_\alpha^\phi(\phi_{h,\tau}|\hat\phi_{h,\tau})\; ds.
\end{align*}
%
The third term can be expended as 
\begin{align*}
(iii) 
&= \la \phi_{h,\tau}(\bu_{h,\tau}-\hbu_{h,\tau}),\nabla(\bmu_{h,\tau}-\hbmu_{h,\tau}) \ra^n + 
\la \phi_{h,\tau}(\bu_{h,\tau}-\hbu_{h,\tau}) \bar r_{2,h,\tau})\ra^n \\
&\le (iiia) + \int_{t^{n-1}}^{t^n} \|\bar \phi_{h,\tau}\|_{H^1} \|\bar \uu - \hbu_{h,\tau}\|_{L^2} \|\bar r_{2,h,\tau}\|_{H^1} \, ds,
\end{align*}
where the term $(iiia)$ involving $\mu$ is kept for later.
Using the uniform $L^\infty(H^1)$ bounds for $\phi_{h,\tau}$ and Young's inequality, we arrive at 
\begin{align*}
(iii) & \le (iiia) + C_3' \int_{t^{n-1}}^{t^n}  \E_\alpha^\uu(\uu_{h,\tau}|\hat\uu_{h,\tau})\; ds
+ C_3 \int_{t^{n-1}}^{t^n} \|\bar r_{2,h,\tau}\|^2_{H^1} \,ds.
\end{align*}
%
The fourth and fifth term can be estimated with similar arguments by
\begin{align*}
(iv) + (v) 
\leq \frac{1}{8} \int_{t^{n-1}}^{t^{n}}  &\D^\mu_{\bar \phi_{h,\tau}}(\bmu_{h,\tau}-\hbmu_{h,\tau}) \, ds \\
&+ C_4 \int_{t^{n-1}}^{t^{n}} \|\bar r_{1,h,\tau}\|_{H^{-1}}^2 \,ds 
+ c_4 \int_{t^{n-1}}^{t^n}  \E_\alpha^\phi(\phi_{h,\tau}|\hat\phi_{h,\tau})\; ds.
\end{align*}
The sixth therm can be estimated using H\"older and embedding inequalities by
\begin{align*}
(vi) &\leq 
\alpha \int_{t^{n-1}}^{t^{n}}  \|\phi_{h,\tau}\|_{L^6} \|\bu_{h,\tau}-\hbu_{h,\tau} \|_{L^3}\|\nabla \phi_{h,\tau} - \bar \phi_{h,\tau}\|_{L^2} \,ds \\
&\leq \frac{1}{8} \int_{t^{n-1}}^{t^{n}} \D^\uu_{\bar \phi_{h,\tau}}(\bu_{h,\tau} - \hbu_{h,\tau}) \,ds   +  C_6' \int_{t^{n-1}}^{t^n}  \E_\alpha(\phi_{h,\tau},\uu_{h,\tau}|\hat\phi_{h,\tau},\hat\uu_{h,\tau})\; ds.
\end{align*}
Here $\D_\phi^\uu(\uu)=\D_{\phi}(0,\uu)$ is the $\uu$-dependent part of the dissipation functional.
Using similar arguments as in the estimate for (ii), the last term can be bounded by
\begin{align*}
(vii) \leq C_7'\int_{t^{n-1}}^{t^n}  \E_\alpha^\phi(\phi_{h,\tau}|\hat\phi_{h,\tau})\; ds.
\end{align*}
The constant $C_7'$ depends on the uniform bounds for $\|\phi_{h,\tau}\|_{L^\infty(H^1)}$,  $\|\hat\phi_{h,\tau}\|_{L^\infty(H^1)}$, and $\norm{\dt\hat\phi_{h,\tau}}_{L^\infty(L^2)}$, 
which are valid under our assumptions.

\subsection*{Second term in $\eqref{eq:relenest}$}
Applying the identity \eqref{eq:aux} to the respective integrand, and using  the identities
\eqref{eq:pg3} and \eqref{eq:disc_pert3}
with admissible test function $\bar\vv_{h,\tau}=\bu_{h,\tau}-\hbu_{h,\tau} \in P_0(\Itau;\Vh^d)$, leads to
\begin{align*}
(II) &= \la \bu_{h,\tau} - \hbu_{h,\tau},\dt \uu_{h,\tau} - \dt \hat\uu_{h,\tau}  \ra^n 
\\
&= -\la \eta(\bar\phi_{h,\tau})\nabla(\bu_{h,\tau}-\hbu_{h,\tau}),\nabla(\bu_{h,\tau} - \hbu_{h,\tau}) \ra^n 
\\
&\qquad \qquad - \la \uu_{h,\tau} \cdot \nabla (\bu_{h,\tau} - \hbu_{h,\tau}),\bu_{h,\tau} - \hbu_{h,\tau} \ra_\skw^n 
+ \la \bar p_{h,\tau} - \hat{\bar{p}}_{h,\tau},\div(\bu_{h,\tau} - \hbu_{h,\tau}) \ra^n
\\
& \qquad \qquad - \la \phi_{h,\tau}(\bu_{h,\tau} - \hbu_{h,\tau}),\nabla(\bmu_{h,\tau} - \hbmu_{h,\tau})\ra + \la \bar \rr_{3,h,\tau},\bu_{h,\tau} - \hbu_{h,\tau} \ra \,  ds\\
 &= -\int_{t^{n-1}}^{t^n} \D^\uu_{\bar \phi_{h,\tau}}(\bu_{h,\tau} - \hbu_{h,\tau}) \,ds + (a) + (b) + (c) + (d).
\end{align*}
The first term is the dissipation caused by viscous effects. %
Due to skew-symmetry of the convective term, we have $(a)=0$. 
By testing the identities \eqref{eq:pg4} and \eqref{eq:disc_pert4} with $\bar q_{h,\tau} = p_{h,\tau}-\hbp_{h,\tau}\in P_0(\Itau;\Qh)$, one can see that $\bu_{h,\tau} - \hbu_{h,\tau} \in P_0(\Itau;\Zh)$ is discretely divergence free, and hence $(b)=0$ as well. 
A brief inspection shows that $(c)=-(iiia)$ from above, and therefore will cancel when summing up the terms later on. 
For the last term, we use 
\begin{align*}
(d) &\le \int_{t^{n-1}}^{t^n} \|\bar \rr_{3,h,\tau}\|_{\Zh^*} \|\bu_{h,\tau} - \hbu_{h,\tau}\|_{H^1} \,ds 
\le \frac{1}{8} \int_{t^{n-1}}^{t^n} \D_{\bar \phi_{h,\tau}}(\bu_{h,\tau} - \hbu_{h,\tau}) \,ds \\ 
& \qquad \qquad + 
c_d \int_{t^{n-1}}^{t^n}  \E_\alpha^\uu(\uu_{h,\tau}|\hat\uu_{h,\tau})\; ds
 + C_d \int_{t^{n-1}}^{t^n} \|\bar \rr_{3,h,\tau}\|_{\Zh^*}^2 \,ds.
\end{align*}
\subsection*{Completion of the proof}
By adding up the detailed estimates for the individual contributions to the terms $(I)$ and $(II)$ in \eqref{eq:relenest}, we obtain the desired estimate.
Let us note that the constants $C_x$ in the above estimates only depend on the bounds for the coefficients, while the constants $C_x'$ may also depend on uniform bounds for the solutions, which are valid under our assumptions. 
\qed

\section{Proof of Lemma~\ref{lem:residual} and estimates for $\mu$}
\label{app:residual}
%

Using assumptions (A0)--(A7), we first establish the bounds
\begin{align*}
&\int_{t^{n-1}}^{t^n} \|\bar r_{1,h,\tau}\|^2_{H^{-1}} 
+ \|\bar r_{2,h,\tau}\|^2_{H^1} + \|\bar \rr_{3,h,\tau}\|^2_{\Zh^*} \, ds \\ 
& \qquad \qquad \le 
\hat c \int_{t^{n-1}}^{t^n}\E_\alpha(\phi_{h,\tau},\uu_{h,\tau}|\hat \phi_{h,\tau},\hat\uu_{h,\tau}) \; ds \\
& \qquad \qquad \qquad \qquad + \frac{1}{4 \bar C} \int_{t^{n-1}}^{t^n} \D_{\bar \phi_{h,\tau}}(\bmu_{h,\tau}-\hbmu_{h,\tau},  \bu_{h,\tau}-\hbu_{h,\tau}) \, ds + \hat C (h^4 + \tau^4),
\end{align*}
for the discrete residuals.
For ease of notation, we abbreviate  $W^{s,q}(t^{n-1},t^n;W^{k,p})$ by $W^{s,q}(W^{k,p})$ in this section, and we consider the three terms separately.

\subsection*{First residual}

From the bounds for the $H^1$-projection error, we immediately obtain
\begin{align*}
\int_{t^{n-1}}^{t^n} \|\bar r_{1,h,\tau}\|^2_{H^{-1}} \,ds
&\le 3\int_{t^{n-1}}^{t^n}  \|\dt (\pi_h^1 \phi - \phi)\|^2_{H^{-1}} +  \|b(\bar\phi_{h,\tau}) \nabla \hbmu_{h,\tau} - \overline{b(\phi) \nabla \mu}\|^2_{L^2}  \\
    &\qquad \qquad \qquad  +\|\hbu_{h,\tau}\bar\phi_{h,\tau}-\oobar{\uu\phi}\|^2_{L^2} \,ds \\
    &\le C h^2 \|\dt \phi\|_{L^2(H^1)}^2 + 3 (*)_1^2 + 3 (*)_2^2,
\end{align*}
where we again used $\overline{a} = \bar \pi_\tau^0 a$ here to abbreviate the projection onto piecewise constant functions in time. 
The remaining terms can be further estimated  as follows:
\begin{align*}
(*)_1^2 
&\le \int_{t^{n-1}}^{t^n} \|b(\bar\phi_{h,\tau}) \nabla (\hbmu_{h,\tau} - \overline{\mu})\|_{L^2}^2 + \|b(\bar\phi_{h,\tau}) - b(\bar\phi)) \nabla \bar \mu\|_{L^2}^2 \\
& \qquad \qquad \qquad  + \|(b(\bar\phi) - \overline{b(\phi)}) \nabla \bar \mu\|_{L^2}^2  + \|\overline{b(\phi) \nabla \mu} - \overline{b(\phi)}\;\overline{\nabla\mu}\|_{L^2}^2 \,ds \\
&= (i)+(ii)+(iii)+(iv).
\end{align*}
Using the bounds of assumption (A2), the definition of $\hbmu_{h,\tau}$, and the stability and error estimates for the $L^2$-projection \eqref{eq:l2projest}, we immediately obtain
\begin{align*}
(i) 
\le C \int_{t^{n-1}}^{t^n} \|\nabla (\pi_h^0 \mu - \mu)\|_{L^2}^2 \,ds 
\le \tilde C h^4 \|\mu\|_{L^2(H^3)}^2. 
\end{align*}
By H\"older inequalities, assumption (A2), and elementary calculus, we obtain 
\begin{align*}
(ii) 
&\le C \int_{t^{n-1}}^{t^n} \|b(\bar \phi_{h,\tau}) - b(\bar \phi)\|_{L^6}^2 \|\mu\|_{W^{1,3}}^2 \,ds \\
&\le \tilde C \|\mu\|_{L^\infty(W^{1,3}_p)} \int_{t^{n-1}}^{t^n} \|\phi_{h,\tau} - \hat \phi_{h,\tau}\|_{L^6}^2 + \|\hat \phi_{h,\tau} -  \phi\|_{L^6}^2\,ds.
\end{align*}
By Sobolev embedding, bounds for the projection and interpolation errors, and the $H^1$-bounds for the relative energy, we see that 
\begin{align*}
(ii) &\le  \tilde C \|\mu\|_{L^\infty(W^{1,3})} \left( h^4  \|\phi\|_{L^2(H^3)}^2 + \tau^4 \|\dtt \phi\|_{L^2(H^1)}^2  + c\int_{t^{n-1}}^{t^n} \E_\alpha^\phi(\phi_{h,\tau}|\hat\phi_{h,\tau}) \; ds \right).
\end{align*}
To estimate (iii), we observe that both averages are second order approximation for the value at the midpoint of the time interval. Using \eqref{eq:midpoint_order_single} thus yields
\begin{align*}
(iii) 
&\leq C \|\mu\|_{L^\infty(W^{1,3})}^2 \int_{t^{n-1}}^{t^n} \|b(\bar\phi)-\overline{b(\phi)}\|_{L^6}^2 \,ds \\
&\leq  \tilde C \tau^4 \|\mu\|_{L^\infty(W^{1,3})}^2 \int_{t^{n-1}}^{t^n} \|\dtt b(\phi)\|_{L^6}^2 \,ds.
\end{align*}
By Sobolev embedding, the chain rule, the triangle inequality, and assumption (A2), we can bound
\begin{align*}
\int_{t^{n-1}}^{t^n} \|\dtt b(\phi)\|_{L^6} \,ds
&\le 2 \int_{t^{n-1}}^{t^n} b_3\|(\dt \phi)^2\|_{H^1}^2 + b_4 \|\dtt \phi\|_{H^1}^2 \,ds \\ 
&\le C \, \left( \|\dt \phi\|_{L^\infty(H^1)}^2 \|\dt \phi\|_{L^2(H^1)}^2 +  \|\dtt \phi\|_{L^2(H^1)}^2 \right).   
\end{align*}
This allows to estimate the term (iii) by powers of $\tau$ and norms of the solution appearing in assumption (A7). 
For the fourth term, we again use \eqref{eq:midpoint_order} to obtain
\begin{align*}
(iv) 
&\le C\tau^4 \| b(\phi)\nabla\mu\|_{H^2(L^2)}^2 
\le C' \tau^4 \left(\|\mu\|_{H^2(H^1)}^2 + \|\phi\|_{H^2(H^1)}^2 \right).
\end{align*}
The constant $C'$ here may depend on bounds for $\mu$ and $\dt \phi$ in the $L^\infty(W^{1,3})$ and $L^\infty(H^1)$ norm, respectively. 
By summing up the individual contributions, we see that 
\begin{align*}
C(*)_1 & \leq C'  h^4 (\|\mu\|_{L^2(H^3)}^2 + \|\phi\|_{L^2(H^3)}^2) + C'' \tau^4 (\|\phi\|_{H^2(H^1)}^2 + \|\mu\|_{H^2(H^1)}^2) \\
& \qquad \qquad + c' \int_{t^{n-1}}^{t^n} \E_\alpha^\phi(\phi_{h,\tau}|\hat\phi_{h,\tau}) \;ds
\end{align*}
with constants $C'$, $C''$ and $c'$ depending on uniform bounds for $\mu$ and $\dt \phi$ in $L^\infty(W^{1,3})$ and $L^\infty(H^1)$, respectively. 
In order to estimate the term $(*)_2$, 
we note that
\begin{align*}
\int_{t^{n-1}}^{t^n} (*)_2^2 \,ds 
&\le  \int_{t^{n-1}}^{t^n}  \norm{\hbu_{h,\tau}\bar\phi_{h,\tau}-\hbu_{h,\tau}\bar\phi}_{L^2}^2 + \norm{\hbu_{h,\tau}\bar\phi-\bar\uu\bar\phi}_{L^2}^2 + \norm{\bar\uu\bar\phi-\oobar{\uu\phi}}_{L^2}^2 \,ds\\
& = (a) + (b) + (c).
\end{align*}
The first term can be estimated by 
\begin{align*}
(a) 
&\le \int_{t^{n-1}}^{t^n} \|\hbu_{h,\tau}\|_{L^3}^2 \|\bar\phi_{h,\tau}-\bar\phi\|_{L^6}^2 \,ds
\\
&\le C \|\uu\|_{L^\infty(H^1)} \left( h^4 \|\phi\|_{L^2(H^3)}^2 + \tau^4 \|\phi\|_{H^2(H^1)}^2 + c \int_{t^{n-1}}^{t^n}\E_\alpha^\phi(\phi_{h,\tau}|\hat\phi_{h,\tau}) \; ds \right).
\end{align*}
For the second term, we obtain in a similar manner
\begin{align*}
(b) 
&\le  \int_{t^{n-1}}^{t^n} \|\phi\|_{L^6}^2 \|\hbu_{h,\tau}-\bar\uu\|_{L^3}^2 \,ds  
\le C \|\phi\|_{L^\infty(H^1)}^2 \left( h^4 \|\uu\|_{L^2(H^3)}^2 + \tau^4 \|\uu\|_{H^2(H^1)}^2 \right).
\end{align*}
For the third term, we again use \eqref{eq:midpoint_order} to obtain
\begin{align*}
(iii) 
&\le C \tau^4 \|\phi\uu\|_{H^2(L^2)}^2  
\le C' \tau^4 \left( \|\phi\|_{H^2(L^4)}^2 + \|\uu\|_{H^2(L^4)}^2 \right),
\end{align*}
with a constant $C'$ that depends on the uniform bounds for norms of $\uu$ and $\phi$ appearing in assumption (A7).
By summing up the individual estimates, we arrive at
\begin{align*}
C(*)_1 
&\le C' h^4 (\|\phi\|_{L^2(H^3)}^2 + \|\uu\|_{L^2(H^3)}^2) + C'' \tau^4 (\|\phi\|_{H^2(H^1)}^2 + \|\uu\|_{H^2(H^1)}^2)  \\
& \qquad \qquad \qquad + c' \int_{t^{n-1}}^{t^n}\E_\alpha^\phi(\phi_{h,\tau}|\hat\phi_{h,\tau})\; ds
\end{align*}
with constants $C'$, $C''$ and $c'$ depending on uniform bounds of the solution  in norms  appear in assumption (A7).
By combination of the previous estimates, we may then bound the first residual as required. 
\qed

\subsection*{Second residual}

Let us start with the observation that
\begin{align} \label{eq:aux2}
\la \nabla (\hat \phi_{h,\tau} - \I_\tau^1 \phi), \nabla \bar \xi_{h,\tau} \ra^n
&= \la \I_\tau^1 \phi - \hat \phi_{h,\tau}, \bar \xi_{h,\tau}\ra^n,
\end{align}
which follows immediately from the definition of $\hat \phi_{h,\tau}$ and \eqref{eq:defh1proj}.
The second residual can then be expressed equivalently as 
\begin{align*}
    \bar r_{2,h,\tau} = (\overline{\pi_h^0 \mu} - \overline{\I_\tau^1 \pi_h^0 \mu}) + (\overline{\I_\tau^1 \phi} - \overline{\hat \phi_{h,\tau}}) + (\overline{f'(\hat \phi_{h,\tau})} - \overline{\I_\tau^1 f'(\phi)}).
\end{align*}
Recall that we use $\overline{g} = \bar \pi_\tau^0 g$ to denote the piecewise constant projection of $g$ with respect to time. 
This pointwise representation allows us to estimate
\begin{align*}
\int_{t^{n-1}}^{t^n} \|\bar r_{2,h,\tau}\|_{H 1}^2 \,ds &\leq \|\pi_h^0 \mu - \I_\tau^1 \pi_h^0 \mu\|^2_{L^2(H^1)} + \|\I_\tau^1 \phi - \hat \phi_{h,\tau}\|^2_{L^2(H^1)} \\
& \qquad \qquad \qquad + \|f'(\hat \phi_{h,\tau}) - \I_\tau^1 f'(\phi)\|^2_{L^2(H^1)}  
= (i) + (ii) + (iii) .   
\end{align*}
We again bound the individual terms separately. 
For the first term, we use the $H^1$-stability of the $L^2$-projection $\pi_h^0$ and the interpolation error estimate \eqref{eq:timinterpest} to obtain 
\begin{align*}
    (i) \le C \|\mu - \I_\tau^1 \mu\|_{L^2(H^1)}^2 \le \tilde C \tau^4 \|\mu\|_{H^2(H^1)}^2.
\end{align*}
For the second term, we employ a triangle inequality and standard interpolation and projection error estimates, to get
\begin{align*}
(ii) 
&\le 3 \left(\|\I_\tau^1 \phi - \phi\|_{L^2(H^1)}^2 + \|\phi - \pi_h^1\phi\|^2_{L^2(H^1)} + \|\pi_h^1 \phi - \I_\tau^1 \pi_h^1 \phi\|_{L^2(H^1)}^2 \right) \\
&\le C  \left( h^4 \|\phi\|_{L^2(H^3)}^2 + \tau^4 \|\phi\|_{H^2(H^1)}^2 \right).
\end{align*}
From assumption (A7), we conclude that 
$\phi$ and its projection $\hat\phi_{h,\tau} = \I_\tau^1 \phi_h^1 \phi$ can be bounded uniformly in $L^\infty(W^{1,\infty})$.
Therefore, all terms $f^{(k)}(\cdot)$ appearing in the following can be estimated in modulus by a constant $C_f$.
This allows us to bound
\begin{align*}
(iii) 
&\le \|f'(\hat \phi_{h,\tau}) - f'(\phi)\|_{L^2(H^1)}^2 + \|f'(\phi) - \I_\tau^1 f'(\phi)\|_{L^2(H^1)}^2 
\\
&\le C_f \|\hat\phi_{h,\tau} - \phi\|_{L^2(H^1)}^2 + \tau^4\|f'(\phi)\|_{H^2(H^1)}^2 
\\ 
&\le \tilde C_f \left( h^4 \|\phi\|_{L^2(H^3)}^2 + \tau^4 \|\phi\|_{H^2(H^1)}^2 + \tau^4 \|\phi\|_{L^2(H^3)}^2 + \tau^5 \right).
\end{align*}
In summary, the second residual may thus be estimated by
\begin{align*}
\int_{t^{n-1}}^{t^n} \|\bar r_{2,h,\tau}(s)\|_{H^1}^2 \,ds 
&\le C' h^4 \|\phi\|_{L^2(H^3)}^2  \\
& \qquad  + C'' \tau^4 \left(  \|\mu\|_{H^2(H^1)}^2 + \|\phi\|_{H^2(H^1)}^2 + \|\phi\|_{L^2(H^3)}^2 + \tau \right), 
\end{align*}
with constants $C'$, $C''$ depending on the uniform bounds of the solution in the norms appearing in assumption (A7). 

\subsection*{Third residual}

By triangle, binomial and H\"older inequalities, we get 
\begin{align*}
\int_{t^{n-1}}^{t^n} \norm{\rr_{3,h,\tau}}_{\Zh^*}^2 
&\le 6 \int_{t^{n-1}}^{t^n} \|\dt(\mathbf{P}^1_h\uu-\uu)\|_{H^{-1}}^2 + \|\eta(\bar\phi_{h,\tau}) \nabla\hbu_{h,\tau} - \overline{\eta(\phi)\nabla\uu}\|_{L^2}^2 \\ 
& \qquad \qquad + \|\hbp_{h,\tau} - \bar p\|_{L^2}^2 + \|\bu_{h,\tau}\hbu_{h,\tau} - \overline{\uu\uu}\|_{L^2}^2 \\
& \qquad \qquad + \|\bu_{h,\tau}\nabla\hbu_{h,\tau} - \overline{\uu\nabla\uu}\|_{L^{6/5}}^2 + \|\bar\phi_{h,\tau} \nabla\hbmu_{h,\tau}-\overline{\phi\nabla\mu}\|_{L^2}^2 \,ds\\
&= (i) + (ii) + (iii) + (iv) + (v) +(vi).
\end{align*}
By standard estimates for the Stokes projection, see \eqref{eq:stokesproj}, the first term is bounded by
\begin{equation*}
(i) \leq Ch^4 \|\dt\uu\|^2_{L^2(H^1)}. 
\end{equation*}
For the second term, we use
\begin{align*}
 (ii) &\leq \tilde C \int_{t^{n-1}}^{t^n}  \|\eta(\bar\phi_{h,\tau}) \nabla(\hbu_{h,\tau} -\bar\uu)\|_{L^2}^2 +  \|(\eta(\bar\phi_{h,\tau})-\overline{\eta(\bar\phi)})\nabla\bar\uu\|_{L^2}^2 \\
 &\qquad \qquad  + \|(\eta(\bar\phi)-\overline{\eta(\phi)})\nabla\bar\uu\|_{L^2}^2 + \|\overline{\eta(\phi)\nabla\uu}-\overline{\eta(\phi)}\nabla\bar\uu\|_{L^2}^2 \,ds\\
 &= (*)_1 + (*)_2 +(*)_3 +(*)_4
\end{align*}
With the bounds for $\eta(\cdot)$ in assumption (A3), the definition of $\hbu_{h,\tau}$, and the standard projection and interpolation estimates of Lemma~\ref{lem:projerr}, we immediately obtain
\begin{align*}
(*)_1 
\le \hat C \, \left(h^4 \|\uu\|_{L^2(H^3)}^2 + \tau^4\|\uu\|_{H^2(H^1)}^2 \right). 
\end{align*}
Using the bounds for $\eta(\cdot)$ and $\uu$ from assumptions (A3) and (A7), we further see that
\begin{align*}
(*)_2
&\le \tilde C\|\phi_{h,\tau}-\phi\|^2_{L^2(L^6)} \|\uu\|_{L^\infty(W^{1,3})}^2 \\
 &\leq C' \left( h^4 \|\phi\|^2_{L^2(H^3)} +\tau^4 \|\phi\|^2_{H^2(H^1)} \right)  + c' \int_{t^{n-1}}^{t^n} \E_\alpha^\phi(\phi_{h,\tau},|\hat \phi_{h,\tau}) \;ds.
\end{align*}
For the third term we again employ \eqref{eq:midpoint_order_single}, and obtain
\begin{align*}
(*)_3 
&\le \tilde C \|\nabla\uu\|_{L^\infty(W^{1,3})}^2 \|\overline{\eta(\bar\phi) - \eta(\phi)}\|^2_{L^2(L^6)} 
\le \tilde C' \tau^4 \|\eta(\phi)\|^2_{H^2(H^1)}\\
&\leq C' \tau^4 \left(\tau + \|\phi\|_{H^1(H^3)}^2 + \|\phi\|_{H^2(H^1)}^2\right).
\end{align*}
For the fourth term, we use \eqref{eq:midpoint_order} to verify that
\begin{align*}
(*)_4 
&\le \tilde C\tau^4 \|\eta(\phi)\nabla\uu\|_{H^2(L^2)}^2 
\le C' \tau^4 (\|\uu\|_{H^2(H^1)}^2 + \|\phi\|_{H^2(H^1)}^2).
\end{align*}
The constant $C'$ again depends on uniform bounds for the solution guaranteed by assumption (A7).
By combination of the previous estimates, we thus see that
\begin{align*}
(ii) &\le C' h^4 \left( \|\phi\|_{L^2(H^3)}^2 + \|\uu\|_{L^2(H^3)}^2 \right) \\
& \qquad \qquad + C'' \tau^4 \left( \tau + \|\phi\|_{H^1(H^3)}^2 + \|\phi\|_{H^2(H^1)}^2 + \|\uu\|_{H^2(H^1)}^2\right) \\
& \qquad \qquad + c'\int_{t^{n-1}}^{t^n} \E_\alpha^\phi(\phi_{h,\tau}|\hat \phi_{h,\tau}) \;ds.
\end{align*}
The constant $C'$, $C''$ and $c'$ again depend only the bounds for the coefficients and the solution in the assumptions. 
Via standard projection error estimates, the third term in the expansion of the residual $\bar \rr_{3,h,\tau}$, can be bounded by
\begin{align*}
(iii) \le C \left( h^4 \|p\|_{L^2(H^2)}^2 + \tau^4 \|p\|_{H^2(L^2)}\right).
\end{align*}
Using triangle and binomial inequalities, and proceeding with similar arguments as already employed in previous estimates, the fourth term can be bounded by
\begin{align*}
(iv) &\leq 3\int_{t^{n-1}}^{t^n} \|\bu_{h,\tau}-\bar\uu\|_{L^6}^2 \|\hbu_{h,\tau}\|^2_{L^3} + \norm{\hbu_{h,\tau}-\bar\uu}_{0,6}^2\norm{\hbu}^2_{L^3}  + \norm{\bar\uu\bar\uu-\oobar{\uu\uu}}_{0}^2 \,ds \\
&\leq  C' h^4 \|\uu\|_{L^2(H^3)}^2 + C'' \tau^4 \|\uu\|_{H^2(H^1)}^2 + \frac{1}{8} \int_{t^{n-1}}^{t^n} \D^\uu_{\bar \phi_{h,\tau}}(\bu_{h,\tau}-\hbu_{h,\tau}) \,ds \\
&\qquad \qquad + c'\int_{t^{n-1}}^{t^n} \E_\alpha^\uu(\uu_{h,\tau}|\hat\uu_{h,\tau}) \; ds.
\end{align*}
The constants $C'$, $C''$ and $c'$ again depend on uniform bounds for the solutions in norms that can be controlled by assumption (A7).
By H\"older inequalities and proceeding with similar arguments as above, the fifth term can be estimated via
\begin{align*}
(v) &\leq \tilde C \int_{t^{n-1}}^{t^n} \|\bu_{h,\tau}-\bar\uu\|_{L^3}^2 \|\nabla\hbu_{h,\tau}\|^2_{L^2} + \|\nabla(\hbu_{h,\tau}-\bar\uu)\|_{L^2}^2 \|\bar\uu\|^2_{L^3}  + \|\bar\uu\nabla\bar\uu-\overline{\uu\nabla\uu}\|_{L^{6/5}}^2 \,ds\\
& \leq C' h^4 + C'' \tau^4 + \frac{1}{8} \int_{t^{n-1}}^{t^n} \D^\uu_{\bar\phi_{h,\tau}}(\bu_{h,\tau}-\hbu_{h,\tau}) \,ds  + c'\int_{t^{n-1}}^{t^n} \E_\alpha^\uu(\uu_{h,\tau}|\hat\uu_{h,\tau}) \; ds.
\end{align*}
The last term in the expansion of the residual can finally be treated by
\begin{align*}
(vi) &\leq 3 \int_{t^{n-1}}^{t^n}  \|\bar \phi \nabla\bar \mu -\overline{\phi\nabla\mu}\|^2_{L^2}
+ \|\bar\phi_{h,\tau}-\bar\phi\|^2_{L^6}
\|\nabla\hbmu_{h,\tau}\|_{L^3}^2 
+ \|\bar\phi\|^2_{L^\infty} \|\nabla (\hbmu_{h,\tau}-\bar\mu)\|_{L^2}^2  
\,ds
\\
&\le C \tau^4 \|\phi\nabla\mu\|_{H^2(L^2)}^2
+ \tilde C \|\mu\|_{L^\infty(W^{1,3})}^2 (h^4 \|\phi\|_{L^2(H^3)}^2 +\tau^4 \|\phi\|_{H^2(H^1)}^2)  \\
& + \hat C \|\phi\|_{L^\infty(L^\infty)}^2 (h^4 \|\mu\|_{L^2(H^3)}^2 +\tau^4 \|\mu\|_{H^2(H^1)}^2)  + c'\int_{t^{n-1}}^{t^n} \E_\alpha^\phi(\phi_{h,\tau}|\hat \phi_{h,\tau}) \; ds.
\end{align*}
The first term in this estimate can be further expanded and estimated by norms appearing also in the other terms. 
In summary, we thus obtain
\begin{align*}
\int_{t^{n-1}}^{t^n} \|\bar \rr_{3,h,\tau}\|^2_{\Zh^*} \,ds
\le \;& C' h^4 (\|\phi\|_{L^2(H^3)}^2 + \|\uu\|_{H^1(H^1)}^2 + \|\uu\|_{L^2(H^3)}^2 + \|p\|_{L^2(H^2)}^2) \\
& + C'' \tau^4 (\tau + \|\phi\|_{H^1(H^3)}^2 + \|\phi\|_{H^2(H^1)}^2 + \|\uu\|_{H^2(H^1)}^2 + \|p\|_{H^2(L^2)}^2) \\
&+ \frac{1}{8} \int_{t^{n-1}}^{t^n}\D^\uu_{\bar \phi_{h,\tau}}(\bu_{h,\tau}-\hbu_{h,\tau}) \,ds  + c'  \int_{t^{n-1}}^{t^n} \E_\alpha(\phi_{h,\tau},\uu_{h,\tau}|\hat \phi_{h,\tau},\hat\uu_{h,\tau}) \; ds.
\end{align*}
The constants $C'$, $C''$ and $c'$ again only depend on bounds for the parameters and uniform bounds for the solution, which are valid under our assumptions.
\qed

\subsection*{Completion of the proof}
The assertion of Lemma~\ref{lem:residual} now follows by simply adding up the individual estimates for the three residuals. \qed

\subsection*{Estimates for $\|\bar\mu_{h,\tau}-\hbmu_{h,\tau}\|_{L^2}$}
\label{subsec:mu}

From the Poincar\'e lemma, the estimate~\eqref{eq:meanmuerr}, Lemma~\ref{lem:residual}, and the bounds for the energy and dissipation functionals, we deduce that
\begin{align*}
\|\bmu_{h,\tau} -  \hbmu_{h,\tau}\|_{L^2(L^2)}^2 
&\le C_P \left( \|\nabla (\bmu_{h,\tau} -  \hbmu_{h,\tau})\|_{L^2(L^2)}^2 + \int_{t^{n-1}}^{t^n} \Big|\la \bmu_{h,\tau} -  \hbmu_{h,\tau},1\ra \Big|^2\, ds  \right)\\
&\le C \int_{t^n}^{t^{n-1}} \D_{\bar \phi_{h,\tau}}^\mu(\bmu_{h,\tau} - \hbmu_{h,\tau}) + \E_\alpha^\phi(\phi_{h,\tau} |\hat \phi_{h,\tau})\, ds + C'(h^4 + \tau^4),
\end{align*}
for every single interval $(t^{n-1},t^n)$. 
With the same arguments as already employed in the proof of Lemma \ref{lem:diskerr}, we then obtain 
$\|\bmu_{h,\tau} - \hbmu_{h,\tau}\|_{L^2(0,T;L^2)}^2 \le C' (h^4 + \tau^4)$.
%

\section{Proof of Lemma~\ref{lem:res2}}
\label{app:residual2}

We now establish the required bounds for the alternative residuals, which appeared in the proof of uniqueness of solutions to the discrete problem. 

\subsection*{First residual}
In straight forward manner, we obtain \begin{align*}
   \int_{t^{n-1}}^{t^{n}}\|\bar r_{1,h,\tau}\|_{H^{-1}}^2 ds &\le  C(\|\hbmu_{h,\tau}\|_{L^\infty(W^{1,3})}^2+\norm{\hbu_{h,\tau}}_{L^\infty(L^{3})}^2)\int_{t^{n-1}}^{t^{n}} \E_\alpha(\phi_{h,\tau},\uu_{h,\tau}|\hat\phi_{h,\tau},\hat\uu_{h,\tau}) ds.\\
\end{align*}
For the third residual we obtain
\begin{align*}
  \int_{t^{n-1}}^{t^{n}}\|\bar \rr_{3,h,\tau}&\|_{\Zh^*}^2 \;ds  \leq C\int_{t^{n-1}}^{t^{n}} \norm{\bu_{h,\tau}-\hbu_{h,\tau}}_{L^2}^2\norm{\nabla\hbu_{h,\tau}}_{L^3}^2 + \norm{\bu_{h,\tau}-\hbu_{h,\tau}}_{L^3}^2\norm{\hbu_{h,\tau}}_{L^6}^2\\
  &\quad+ C\norm{\phi_{h,\tau}-\hat\phi_{h,\tau}}_{L^6}^2\norm{\nabla\hbu_{h,\tau}}_{L^3}^2 + \norm{\nabla\hbmu_{h,\tau}}_{L^{3/2}}^2\norm{\phi_{h,\tau}-\hat\phi_{h,\tau}}_{L^6}^2 ds\\
  & \leq C(\delta)(\norm{\hbu_{h,\tau}}_{L^\infty(W^{1,3})}^2+\norm{\hbmu_{h,\tau}}_{L^\infty(W^{1,3})}^2)\int_{t^{n-1}}^{t^{n}} \E_\alpha(\phi_{h,\tau},\uu_{h,\tau}|\hat\phi_{h,\tau},\hat\uu_{h,\tau}) ds  \\
  & + \int_{t^{n-1}}^{t^{n}} 2\delta\D^\uu_{\bar \phi_{h,\tau}}(\bu_{h,\tau}-\hbu_{h,\tau}) \;ds.
\end{align*}
Combination and choosing delta small enough yields
\begin{align*}
   \int_{t^{n-1}}^{t^{n}}\|\bar r_{1,h,\tau}\|_{H^{-1}}^2 &+\|\bar \rr_{3,h,\tau}\|_{\Zh^*}^2 \;ds \leq  \frac{1}{2}\int_{t^{n-1}}^{t^{n}} \D^\uu_{\bar\phi_{h,\tau}}(\bu_{h,\tau}-\hbu_{h,\tau})\;ds\\
    & +C(\norm{\hbu_{h,\tau}}_{L^\infty(W^{1,3})}+ \norm{\hbmu_{h,\tau}}_{L^\infty(W^{1,3})})\int_0^{t} \E_\alpha(\phi_{h,\tau},\uu_{h,\tau}|\hat\phi_{h,\tau},\hat\uu_{h,\tau}) \;ds. 
\end{align*}
In order to use the discrete Gronwall lemma we need to bound the appearing norms suitably.
Furthermore, we observe that $\bar\uu_{h,\tau}, \bar\mu_{h,\tau}$ are uniformly bounded in $L^\infty(W^{1,3})$. This can be seen as follows.
\begin{equation*}
\|\hat\uu_{h,\tau}\|_{L^\infty(W^{1,3})} \leq \norm{\bar\uu_{h,\tau}-\mathbf{P}^1_h\bar\uu}_{L^\infty(W^{1,3})} + \norm{\mathbf{P}^1_h\bar\uu-\bar\uu}_{L^\infty(W^{1,3})} + \norm{\bar\uu}_{L^\infty(W^{1,3})}.    
\end{equation*}
The last two terms are uniformly bounded by assumption and standard projection errors estimates. The first term can be controlled by employing the inverse inequality \eqref{eq:inverse} with $p=3,q=2,d\leq 3$ in space and $p=\infty,q=2,d=1$ in time, as well as the convergence estimate derived before. This leads to
\begin{equation*}
  \norm{\bar\uu_{h,\tau}-\mathbf{P}^1_h\bar\uu}_{L^\infty(W^{1,3})} \leq C\tau^{-1/2}h^{-1/2}\norm{\bar\uu_{h,\tau}-\mathbf{P}^1_h\bar\uu}_{L^2(H^1)} \leq C_1'h^{-1/2}\tau^{3/2} + C_2'h^{3/2}\tau^{-1/2}.  
\end{equation*}
The same holds true for the other contribution involving $\hbmu_{h,\tau}$ using the $L^2$-projection instead of the Stokes-projection. Hence, we observe that $\tau=c' h$ is a natural choice.
\qed

\section{Proof of Lemma~\ref{lem:time_derivative}}
\label{app:time-derivative}
In this section, we first introduce some auxiliary notation and results, and then establish the proof Lemma~\ref{lem:time_derivative}.
We start by introducing the discrete Stokes operator $\Ah:\Zh \to \Zh$, which is defined by the variational identity
\begin{align*}
   \la \uu_h,\vv_h \ra + \la \nabla\uu_h,\nabla\vv_h \ra  = \la \Ah\uu_h,\vv_h \ra = \la \Ah^{1/2}\uu_h,\Ah^{1/2}\vv_h \ra \qquad \forall \vv_h\in \Zh.
\end{align*}
By construction $\Ah$ is symmetric and positive definite and we observe that 
\begin{equation}
  \norm{\Ah^{1/2}\vv_h}_{L^2} = \norm{\vv_h}_{H^1} \text{ for all } \vv_h\in\Zh.   \label{eq:discstokeseuiv}
\end{equation}
The discrete Leray projection $\Pi_h:L^2(\Omega)^d\to \Zh$, on the other hand, is defined by
\begin{align*}
    \la \uu- \Pi_h \uu,\vv_h \ra = 0 \qquad \forall \vv_h\in\Zh.
\end{align*}
On quasi-uniform meshes, this operator is stable in $H^1(\Omega)$; see \cite{ayuso_refined} for details.
With the help of these operators, we can show the following bound.

\begin{lemma}\label{lem:ayuso1}
For any $\bbf\in L^2(\Omega)^d$, one has
\begin{align*}
   \norm{\Ah^{-1/2}\Pi_h \bbf}_{L^2} \leq \norm{\bbf}_{H^{-1}} .
\end{align*}
\end{lemma}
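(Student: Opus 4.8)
The plan is to run a short duality argument, exploiting that $\Ah$ is symmetric positive definite on the finite-dimensional inner product space $(\Zh,\la\cdot,\cdot\ra)$, so that by the spectral calculus the powers $\Ah^{1/2}$ and $\Ah^{-1/2}$ are well-defined self-adjoint operators on $\Zh$ obeying the identity $\Ah^{1/2}\Ah^{-1}=\Ah^{-1/2}$. First I would abbreviate $g_h := \Ah^{-1/2}\Pi_h\bbf \in \Zh$ and introduce the auxiliary element $\vv_h := \Ah^{-1}\Pi_h\bbf \in \Zh$, which is well-defined since $\Ah:\Zh\to\Zh$ is invertible. Using the self-adjointness of $\Ah^{-1/2}$ with respect to the $L^2$-inner product on $\Zh$, I would rewrite
\[
\norm{g_h}_{L^2}^2 = \la \Ah^{-1/2}\Pi_h\bbf, g_h\ra = \la \Pi_h\bbf, \Ah^{-1/2}g_h\ra = \la \Pi_h\bbf, \vv_h\ra .
\]

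Next, since $\vv_h \in \Zh$, the defining property $\la \bbf - \Pi_h\bbf, \vv_h\ra = 0$ of the discrete Leray projection lets me replace $\Pi_h\bbf$ by $\bbf$ in this pairing, and then I would invoke the definition of the dual norm (valid because $\vv_h \in \Zh \subset H^1(\Omega)^d$ is an admissible test function) to obtain
\[
\norm{g_h}_{L^2}^2 = \la \bbf, \vv_h\ra \le \norm{\bbf}_{H^{-1}}\,\norm{\vv_h}_{H^1}.
\]
Finally, I would use the norm equivalence \eqref{eq:discstokeseuiv} together with the operator identity $\Ah^{1/2}\vv_h = \Ah^{1/2}\Ah^{-1}\Pi_h\bbf = \Ah^{-1/2}\Pi_h\bbf = g_h$ to identify $\norm{\vv_h}_{H^1} = \norm{\Ah^{1/2}\vv_h}_{L^2} = \norm{g_h}_{L^2}$. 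Substituting this back gives $\norm{g_h}_{L^2}^2 \le \norm{\bbf}_{H^{-1}}\norm{g_h}_{L^2}$, and dividing by $\norm{g_h}_{L^2}$ (the case $g_h=0$ being trivial) yields the assertion.

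The computation is essentially a one-line duality estimate once the right test function $\vv_h = \Ah^{-1}\Pi_h\bbf$ is chosen; the only point requiring genuine care is the bookkeeping of the fractional powers of $\Ah$, specifically the self-adjointness of $\Ah^{-1/2}$ and the identity $\Ah^{1/2}\Ah^{-1}=\Ah^{-1/2}$, both of which I would justify by the spectral calculus for the symmetric positive definite operator $\Ah$ on the finite-dimensional space $\Zh$. I expect no mesh-dependent constants to enter the argument, so the resulting bound is automatically uniform in $h$, as required for its later use.
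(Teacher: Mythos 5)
Your proof is correct and is essentially the paper's own argument: the same duality computation with the test function $\Ah^{-1}\Pi_h\bbf$ (the paper phrases it as testing the identity $\la \bbf,\Ah^{-1/2}\vv_h\ra = \la \Ah^{-1/2}\Pi_h\bbf,\vv_h\ra$ with $\vv_h=\Ah^{-1/2}\Pi_h\bbf$, which amounts to the same pairing $\la\bbf,\Ah^{-1}\Pi_h\bbf\ra$), the same use of the Leray-projection orthogonality, and the same identification $\norm{\Ah^{-1}\Pi_h\bbf}_{H^1}=\norm{\Ah^{-1/2}\Pi_h\bbf}_{L^2}$ via \eqref{eq:discstokeseuiv}. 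Your slightly more explicit bookkeeping of the fractional powers via spectral calculus is a fair way to justify steps the paper leaves implicit, and the constant-free, $h$-uniform conclusion matches the paper exactly.
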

\begin{proof}
For any $\vv_h\in\Zh$, we can easily see that
\begin{align*}
       \la \bbf,\Ah^{-1/2}\vv_h \ra = \la \Pi_h \bbf,\Ah^{-1/2}\vv_h  \ra = \la \Ah^{-1/2}\Pi_h \bbf,\vv_h  \ra.
\end{align*}
For the special choice $\vv_h=\Ah^{-1/2}\Pi_h \bbf \in \Zh$, we then obtain
\begin{align*}
      \norm{\Ah^{-1/2}\Pi_h \bbf}_{L^2}^2 &= \la \bbf,\Ah^{-1}\Pi_h \bbf \ra \leq \norm{\bbf}_{H^{-1}}\norm{\Ah^{-1}\Pi_h \bbf }_{H^1} = \norm{\bbf}_{H^{-1}}\norm{\Ah^{-1/2}\Pi_h \bbf }_{L^2},
\end{align*}
where we used symmetry of the discrete Stokes operator $\Ah$ and \eqref{eq:discstokeseuiv}.
\end{proof}

As a direct consequence, we then obtain the following estimate.

\begin{lemma}\label{lem:ayuso2}
There existes a constant $C$ independent of $h$, such that
\begin{align*}
    \|\bbf\|_{H^{-1}} \le C \|\Ah^{-1/2}\bbf\|_{L^2} \qquad \forall  \bbf\in\Zh.
\end{align*}    
\end{lemma}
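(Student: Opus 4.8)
The plan is to dualize. Since the $H^{-1}$-norm is the dual norm on $H^1(\Omega)^d$ via \eqref{eq:dualnorm}, it suffices to bound $\la \bbf, \vv\ra$ by $C\,\|\Ah^{-1/2}\bbf\|_{L^2}\,\|\vv\|_{H^1}$ for an arbitrary test function $\vv \in H^1(\Omega)^d$ and then take the supremum over $\vv$. The computation essentially reverses the one in Lemma~\ref{lem:ayuso1}: the discrete Leray projection $\Pi_h$ is used to pass from $H^1(\Omega)^d$ to the discrete divergence-free space $\Zh$, and the identity \eqref{eq:discstokeseuiv} is used to translate the $\Ah^{1/2}$-weighted $L^2$-norm into the $H^1$-norm on $\Zh$.

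First I would exploit that $\bbf \in \Zh$ together with the defining orthogonality of $\Pi_h$. Testing $\la \vv - \Pi_h \vv, \ww_h\ra = 0$ with the admissible choice $\ww_h = \bbf \in \Zh$ gives $\la \bbf, \vv\ra = \la \bbf, \Pi_h \vv\ra$, so the pairing only sees the projected test function $\Pi_h \vv \in \Zh$. Next, since both $\Ah^{-1/2}\bbf$ and $\Pi_h\vv$ lie in $\Zh$, and $\Ah^{1/2}$ is symmetric and positive definite on $\Zh$, I would write $\la \bbf, \Pi_h \vv\ra = \la \Ah^{1/2}(\Ah^{-1/2}\bbf), \Pi_h\vv\ra = \la \Ah^{-1/2}\bbf, \Ah^{1/2}\Pi_h\vv\ra$ and apply the Cauchy--Schwarz inequality. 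By \eqref{eq:discstokeseuiv} the second factor satisfies $\|\Ah^{1/2}\Pi_h\vv\|_{L^2} = \|\Pi_h\vv\|_{H^1}$, so that $\la \bbf,\vv\ra \le \|\Ah^{-1/2}\bbf\|_{L^2}\,\|\Pi_h\vv\|_{H^1}$.

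Finally, the $H^1$-stability of the discrete Leray projection on quasi-uniform meshes, recorded above with reference to \cite{ayuso_refined}, yields $\|\Pi_h\vv\|_{H^1} \le C\,\|\vv\|_{H^1}$ with $C$ independent of $h$. Combining the estimates gives $\la \bbf,\vv\ra \le C\,\|\Ah^{-1/2}\bbf\|_{L^2}\,\|\vv\|_{H^1}$ for all $\vv \in H^1(\Omega)^d$, and taking the supremum over $\vv$ completes the proof. The only nontrivial ingredient, and hence the main obstacle, is precisely this $H^1$-stability of $\Pi_h$, which is where the quasi-uniformity assumption (A5) enters; the remaining steps are purely algebraic manipulations with the symmetric operator $\Ah^{1/2}$ and the orthogonality properties of $\Pi_h$.
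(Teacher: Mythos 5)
Your proposal is correct and follows essentially the same route as the paper's own proof: dualizing the $H^{-1}$-norm, replacing $\vv$ by $\Pi_h\vv$ via the $L^2$-orthogonality of the discrete Leray projection (admissible since $\bbf\in\Zh$), splitting $\bbf=\Ah^{1/2}\Ah^{-1/2}\bbf$ with Cauchy--Schwarz, invoking \eqref{eq:discstokeseuiv}, and concluding with the $H^1$-stability of $\Pi_h$ on quasi-uniform meshes. You also correctly identify that this stability result is the only nontrivial ingredient, exactly as the paper does.
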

\begin{proof}
We start with the following observation
\begin{align*}
\|\bbf\|_{H^{-1}} 
&= \sup_{\vv \in H^1} \frac{\la \bbf,\vv\ra}{\|\vv\|_{H^1}} 
= \sup_{\vv \in H^1} \frac{\la \bbf,\Pi_h \vv\ra}{\|\vv\|_{H^1}},
\end{align*}  
where we use the $L^2$-orthogonality of the discrete Leray-projection, and that $\bbf\in\Zh$.
Since $\Ah$ is symmetric and positive definite, we can further see that 
\begin{align*}
\|\bbf\|_{H^{-1}}
&= \sup_{\vv \in H^1} \frac{\la \Ah^{-1/2}\bbf,\Ah^{1/2} \Pi_h \vv\ra}{\|\vv\|_{H^1}} \\
&\le \|\Ah^{-1/2} \bbf\|_{L^2} \sup_{\vv \in H^1} \frac{\|\Ah^{1/2} \Pi_h \vv\|_{L^2}}{\|\vv\|_{H^1}}.
\end{align*}
Hence, it remains to show that $\|\Ah^{1/2} \Pi_h \vv\|_{L^2}$ can be estimated appropriately by $\|\vv\|_{H^1}$. 
By construction $\|\Ah^{1/2} \Pi_h \vv\|_{L^2} = \norm{\Pi_h \vv}_{H^1}$, and hence the claim is reduced to the $H^1$-stability of the discrete Leray-projection, which holds on quasi-uniform meshes.
\end{proof}

We can now return to the main goal of this section, i.e., the proof of Lemma~\ref{lem:time_derivative}.
Using Lemma~\ref{lem:ayuso2} for $\bbf=\dt(\uu_h-\hat\uu_h)\in\Zh$, we see that 
\begin{equation*}
 \int_{t^{n-1}}^{t^n}\norm{\dt\uu_{h,\tau} -\dt\hat\uu_{h,\tau}}_{H^{-1}}^2\; ds \leq \int_{t^{n-1}}^{t^n}\norm{\Ah^{-1/2}(\dt\uu_{h,\tau} -\dt\hat\uu_{h,\tau})}_{L^2}^2\; ds
\end{equation*}
For ease of notation, we abbreviate $\be_{h,\tau}=\uu_{h,\tau}-\hat\uu_{h,\tau}$ in the following.
Due to the previous lemma, it suffices to bound $\norm{\Ah^{-1/2}\dt\be_{h,\tau}}_{L^2}$. 
By taking the difference of \eqref{eq:pg3} and \eqref{eq:disc_pert3}, restricted to the space $\Zh$, and using $\vv_h=\Ah^{-1}\dt\be_{h,\tau}\in\Zh$ as a test function, we obtain
\begin{align*}
\int_{t^{n-1}}^{t^n} \|\Ah^{-1/2}\dt\be_{h,\tau}\|_{L^2}^2 
&= - \la \eta(\bar\phi_{h,\tau})\nabla\bar\be_{h,\tau},\nabla \Ah^{-1}\dt\be_{h,\tau}\ra^{n}   - \la \bar\uu_{h,\tau} \cdot \nabla \bar\be_{h,\tau}, \Ah^{-1}\dt\be_{h,\tau} \ra_\skw^{n} \\
& \qquad \qquad - \la \bar\phi_{h,\tau}\nabla(\bar\mu_{h,\tau}-\hbmu_{h,\tau}),\Ah^{-1}\dt\be_{h,\tau}\ra^{n} + \la \bar \rr_{3,{h,\tau}},\Ah^{-1}\dt\be_{h,\tau}\ra^{n}\\
&= (i) + (ii) + (iii) + (iv).
\end{align*}
For the first term, we use Hölder's inequality and \eqref{eq:discstokeseuiv}, which gives
\begin{equation*}
(i) \leq C\int_{t^{n-1}}^{t^n}\norm{\nabla\bar\be_{h,\tau}}_{L^2}\norm{\nabla \Ah^{-1}\dt\mathbf{e}_{h,\tau}}_{L^2} \, ds
\leq C\norm{\nabla\bar\be_{h,\tau}}_{L^2(L^2)}\norm{\Ah^{-1/2}\dt\be_{h,\tau}}_{L^2(L^2)}.     
\end{equation*}
By expansion of the trilinear term $\la a \cdot \nabla b, c \ra_\skw$, we see
\begin{align*}
(ii) = \int_{t^{n-1}}^{t^n} \frac{1}{2}\la (\bar\uu_{h,\tau}\cdot\nabla)\bar\be_{h,\tau}, \Ah^{-1}\dt\be_{h,\tau}\ra - \frac{1}{2}\la \bar\be_{h,\tau},(\bar\uu_{h,\tau}\cdot\nabla) \Ah^{-1}\dt\be_{h,\tau}\ra \, ds.     
\end{align*}
For the first term, we use $\mathbf{I} = \Ah^{-1/2}\Ah^{1/2}$, and the second term can be directly estimated, using \eqref{eq:discstokeseuiv} again. This leads to
\begin{align*}
(ii) \leq C\int_{t^{n-1}}^{t^n}(\norm{\Ah^{-1/2}\Pi_h(\bar\uu_{h,\tau}\cdot\nabla)\bar\be_{h,\tau}}_{L^2} + \norm{\bar\be_{h,\tau}}_{{L^2}}\norm{\bar\uu_{h,\tau}}_{L^\infty})\norm{\Ah^{-1/2}\dt\be_{h,\tau}}_{L^2} \, ds. 
\end{align*}
For the third and fourth terms, we again split $\mathbf{I} = \Ah^{-1/2}\Ah^{1/2}$ and use \eqref{eq:discstokeseuiv}, to verify that
\begin{align*}
(iii) + (iv) 
&\leq \int_{t^{n-1}}^{t^n} \Big(\norm{\Ah^{-1/2}\Pi_h\bar\phi_{h,\tau}\nabla(\bar\mu_{h,\tau}-\hbmu_{h,\tau})}_{L^2} \\ 
& \qquad \qquad \qquad + \norm{\Ah^{-1/2}\Pi_h\bar \rr_{3,{h,\tau}}}_{L^2}\Big)\norm{\Ah^{-1/2}\dt\be_{h,\tau}}_{L^2} \, ds.   
\end{align*}
In summary, this yields the bound
\begin{align*}
\int_{t^{n-1}}^{t^n} 
\|&\Ah^{-1/2}\dt  \be_{h,\tau}\|_{L^2}^2  \, ds
\leq C\int_{t^{n-1}}^{t^n}\norm{\nabla\bar\be_{h,\tau}}_{L^2} +  \|\Ah^{-1/2}\Pi_h(\bar\uu_{h,\tau}\cdot\nabla)\bar\be_{h,\tau}\|_{L^2} \\
& 
+ \norm{\bar\be_{h,\tau}}_{L^2}\norm{\bar\uu_{h,\tau}}_{L^\infty} 
  + \norm{\Ah^{-1/2}\Pi_h\bar\phi_{h,\tau}\nabla(\bar\mu_{h,\tau}-\hbmu_{h,\tau})}_{L^2} 
  + \norm{\Ah^{-1/2}\Pi_h\bar \rr_{3,{h,\tau}}}_{L^2}  \, ds. 
\end{align*}
With the help of Lemma~\ref{lem:ayuso1}, we can estimate $\norm{\Ah^{-1/2}\Pi_h f}_{L^2} \leq \norm{f}_{H^{-1}}$, which implies
\begin{align*}
  \int_{t^{n-1}}^{t^n}\norm{\Ah^{-1/2}&\dt\be_{h,\tau}}_{L^2}^2  \, ds
  \leq C\int_{t^{n-1}}^{t^n}\norm{\nabla\bar\be_{h,\tau}}_{L^2} +  \norm{(\bar\uu_{h,\tau}\cdot\nabla)\bar\be_{h,\tau}}_{H^{-1}} \\
  & \qquad + \norm{\bar\be_{h,\tau}}_{L^2}\norm{\bar\uu_{h,\tau}}_{L^\infty} 
  +  \norm{\bar\phi_{h,\tau}\nabla(\bar\mu_{h,\tau}-\hbmu_{h,\tau})}_{H^{-1}} + \norm{\bar \rr_{3,{h,\tau}}}_{\Zh^*}  \, ds.
\end{align*}
With direct estimates, similar to the proof of Lemma~\ref{lem:perropart1}, we then arrive at
\begin{align*}
\int_0^t \norm{\dt\be_{h,\tau}}_{H^{-1}}^2\, ds \leq C\int_0^t \norm{\Ah^{-1/2}\dt\be_{h,\tau}}_{L^2}^2 \, ds \leq C(h^4+\tau^4).
\end{align*}
The error estimate~\eqref{eq:con_res_2} then follows readily with the triangle inequality and the bounds for the projection error stated Lemma \ref{lem:residual}.
\qed

\subsection*{Remark}
Let us conclude by noting that, in the above proof, we have used the natural restriction $\tau\approx h$ to obtain convergence rates for the pressure.  The pressure is reconstructed via the discrete inf-sup stability \eqref{eq:infsup}, hence there is a unique pressure for every velocity.




\end{document}